\newtheorem{thm}{Theorem}
\newtheorem{lem}{Lemma}
\theoremstyle{definition}
\theoremstyle{remark}
\newtheorem{rem}{Remark}
\newcommand{\dint}{\displaystyle\int}
\newcommand{\dsup}{\displaystyle\sup}
\newcommand{\R}{\mathbb{R}}
\newcommand{\E}{\mathbb{E}}
\newcommand{\J}{\mathcal{J}}
\numberwithin{equation}{section} \numberwithin{lem}{section}
\numberwithin{thm}{section} \numberwithin{prop}{section}
\numberwithin{cor}{section} \numberwithin{rem}{section}
\begin{document}

\title{Modeling of a diffusion with aggregation: rigorous derivation and numerical simulation}

\author{Li Chen\footnotemark[1], \; Simone G\"ottlich\footnotemark[1], \; Stephan Knapp\footnotemark[1]}

\footnotetext[1]{University of Mannheim, Department of Mathematics, 68131 Mannheim, Germany (chen@math.uni-mannheim.de, goettlich@uni-mannheim.de, stknapp@mail.uni-mannheim.de).}

\date{\today}

\maketitle

\begin{abstract}
\noindent
In this paper, a diffusion-aggregation equation with delta potential is introduced. Based on the global existence and uniform estimates
of solutions to the diffusion-aggregation equation, we also provide the rigorous derivation from a stochastic particle system while 
introducing an intermediate particle system with smooth interaction potential.
The theoretical results are compared to numerical simulations relying on suitable discretization
schemes for the microscopic and macroscopic level. In particular, the regime switch where the analytic theory fails is numerically analyzed 
very carefully and allows for a better understanding of the equation.
\end{abstract}

\noindent
{\bf AMS Classification:} 35Q70, 82C22, 65M06 \\  
{\bf Keywords:} interacting particle system, stochastic processes, mean-field equations, hydrodynamic limit, numerical simulations 

\maketitle

\section{Introduction}
\label{sec:Intro}

In the last decades, diffusion-aggregation equations of the following type
\begin{eqnarray*}
\partial_t u-\nabla\cdot (a\nabla u-u\nabla V* u)=0, \quad x\in \R^d
\end{eqnarray*}
have been widely studied in the literature. One prominent example is the so-called Keller-Segel system which corresponds to the case that $V(x)=C(d)/|x|^{d-2}$ is the fundamental solution of the Poisson equation. It is well-known that depending on the choice of the initial datum, the solution to the Keller-Segel system 
might exist globally and blow-up in finite time, see for example \cite{DP,JL92},
or \cite{Bed11} for more general potentials of the form $V(x)=1/|x|^\gamma, \gamma<d-2$.

The aggregation phenomena can be motivated by several effects such as flocking and swarming in biological systems \cite{BV2006,BCM07,TBL06}
or interacting granular media \cite{BCP97,CMV03,T00}.  
Moreover, in \cite{L2007}, it has been shown that the local and global existence of the solution to the aggregation equation, i.e. $a=0$, depends on the regularity of $V$. For instance, for the potential $V(x)=e^{-|x|}$ only local existence can be proved while for $V(x)=e^{-|x|^2}$ the global existence holds.
It is also known that in the case of a power-like potential $V(x)=|x|^\alpha$, $2-d\leq \alpha<2$, the smooth solution of the aggregation equation generates finite time blow-ups, see \cite{BB2010,CDFLS11,DHongjie2011,JV2013}.

In this paper, we focus on the case that the aggregation potential is a delta distribution. More precisely, the problem is reduced to the following diffusion-aggregation equation:
\begin{eqnarray}
\label{eq:macro}
\partial_t u - \nabla \cdot (a \nabla u-2bu \nabla u) = \partial_tu-\Delta ((a-bu)u)=0,
\end{eqnarray}
where $a$ and $b$ are both positive constants and the initial data is given by a non-negative density $u(x,0)=u_0(x)\geq 0$.
The problem can be obtained as a mean-field limit of the following interaction stochastic particle system:
\begin{eqnarray}
\label{eq:micro}
&&d X^i(t)= \sqrt{2a}\,dB^i(t) +\frac{1}{N}\sum_{j\neq i} \nabla V_\varepsilon(|X^i(t)-X^j(t)|)dt,\\
&&X^i(0)=\xi_i, \quad \mbox{ i.i.d. random variables with probability density function (pdf) } u_0 \nonumber
\end{eqnarray}
with $B^i$ being independent standard Brownian motions for each particle $i$.
Furthermore, the potential $V_\varepsilon(r)=\varepsilon^{-d}V(r/\varepsilon)$ with $\dint_{\R^d}V(x)dx=2b$ is considered.
The large particle limit $N\rightarrow \infty$ can be understood in the following sense.
For fixed $\varepsilon>0$, the particle model converges heuristically after applying It\^{o}'s formula to an intermediate non-local problem 
for $u^\varepsilon$, i.e.\ the distribution of the random variable $X^i_t$ at time $t$: 
\begin{align}
\partial_tu^\varepsilon -\nabla\cdot (a\nabla u^\varepsilon -u^\varepsilon\nabla V_\varepsilon * u^\varepsilon)=0 \text{ with } u^\varepsilon(x,0) = u_0(x). 
\label{eq:interm_model}
\end{align}
As $\varepsilon\rightarrow 0$, $V_\varepsilon\rightarrow 2b\delta$, we get that the limit $u$ of $u^\varepsilon$ satisfies the following diffusion-aggregation equation:
$$
\partial _t u-a\Delta u+b\Delta u^2=0.
$$
This equation equipped with logistic reaction has been studied in \cite{CDL17} on a bounded domain with different boundary conditions. Both existence and finite time blow-up results have been obtained there. Using the transformation $v=u-\frac{a}{2b}$, the equation can be rewritten as a backward porous media equation 
\begin{eqnarray*}
&&\partial_tv+b\Delta v^2=0
\end{eqnarray*}
which has a special solution (backward Barenblatt solution) in $d$ dimensions
\begin{eqnarray*}
v(t,x)=\frac{1}{b(T-t)}\Big((T-t)^\frac{2}{d+2}-\frac{|x|^2}{4(d+2)}\Big)_+.
\end{eqnarray*}
We note that the constant $\frac{a}{2b}$ plays a crucial role in terms of a
threshold to get global existence or finite time blow-up. 
In particular, we prove that for $0\leq u_0< \frac{a}{2b}$ and $\int u_0(x)dx<\infty$, the weak solution exists globally. 
Under further assumptions, we get that the solution is smooth and uniform estimates for the solution to the intermediate problem \eqref{eq:interm_model} hold,
see section \ref{sec:solva}. 
These results are then used to prove the rigorous convergence from many stochastic particle system to the trajectory of the diffusion-aggregation problem \eqref{eq:macro}, see section \ref{sec:RigorousMeanField}.
In the literature, a variety of similar results can be found for such convergence proofs.
The rigorous mean-field limit and the propagation of chaos with bounded Lipschitz potential has been introduced in 1991, see \cite{S1991}. 
More than 20 years later, the case with potential $V(x)=|x|^\alpha$, $\alpha\in (-1,0)$ has been proven, see \cite{GQ15}. 
The microscopic derivation of a two-dimensional Keller-Segel system is recently given in \cite{GP2017} while the derivation of the multi-dimentional system 
starting from different cut-off interaction particle systems is done in \cite{HL2017,LY2017}. 
Parallelly, the derivation of porous medium equations with exponent 2 from large interacting particles systems
has been introduced in 1990, see \cite{Oel90}. These results have been later improved in \cite{JM98,Phi07}. 
Since the aggregation effect we consider has the backward porous medium structure, we basically follow the idea taken from the derivation of porous medium equations.
However, we derive estimates according to the well-posedness of the diffusion-aggregation equation.

For our numerical investigations,
we impose a problem-adapted numerical scheme 
to better illustrate the transition from the diffusion to the aggregation regime of the equation \eqref{eq:macro}. 
We prove 
that the numerical method is positive preserving independent on the coefficients $a,b$ which is the main difference to a classical finite difference approximation,
see section \ref{sec:disc}. 
In the derivation of the numerical scheme we use ideas for degenerated parabolic equations \cite{Bessemoulin2012,Buerger2008,Liu2011,Pandian1989} as well as techniques used for chemotaxis models, see e.g.\ \cite{ChertockKurganov2008, ChertockKurganovWang2012}.
We numerically study the order of convergence 
and validate the scheme by examining the difference to the microscopic model, see section \ref{sec:num_res}.
To study the convergence of the microscopic model to the macroscopic equation, we introduce an efficient way to generate sample paths of the 
stochastic particle system \eqref{eq:micro}.
Since we use a superposition of Barenblatt profiles as initial densities, the computation of the pseudo-inverse and the use of the inverse transformation method \cite{KloedenPlaten1992} leads to an exact and efficient way to generate the initial random numbers for the approximation of the particle system.

\section{Solvability and uniform estimates}
\label{sec:solva}
This section is devoted to the solvability of the intermediate and limiting (macroscopic)
diffusion-aggregation problem.
Therefore, the section is divided into three parts: We first show the global existence and uniqueness of the 
non-local intermediate problem and the corresponding uniform estimates in $\varepsilon$. 
Then, the global solvability of the diffusion-aggregation problem is obtained by taking the limit $\varepsilon\rightarrow 0$. 
Finally, the error estimates for small $\varepsilon$ are given. These three results are the main ingredients for the mean field limit in section \ref{sec:RigorousMeanField}.

\subsection{Solvability of the intermediate problem}
As already noted in the introduction, the intermediate problem for $u^\varepsilon$ is
\begin{eqnarray}\label{probinter} 
\partial_tu^\varepsilon-\nabla\cdot(a\nabla u^\varepsilon- u^\varepsilon\nabla V_\varepsilon*u^\varepsilon)=0,&& x\in \R^d , \\
\nonumber u^\varepsilon(x,0)=u_0(x),&&
\end{eqnarray}
where $\dint_{\R^d}V_\varepsilon(x) dx=2b$.
From \cite{Maj84}, we know the
following standard estimates that are frequently used in our proof. For any multi-index $\alpha$ with $|\alpha|=s>\frac{d}{2}+1$, it holds for  $f,g\in H^s(\R^d)$ that
\begin{eqnarray}\label{ineq1}
\|D^\alpha (fg)\|_{L^2}\leq C(\|f\|_{L^\infty}\|D^sg\|_{L^2} +\|g\|_{L^\infty}\|D^sf\|_{L^2}),\\ [1ex]
\|[D^\alpha, f]g\|_{L^2}\leq C(\|Df\|_{L^\infty}\|D^{s-1}g\|_{L^2}+ \|g\|_{L^\infty}\|D^sf\|_{L^2}),\label{ineq2}
\end{eqnarray}
where $C$ depends on $d$ and $s$.

\begin{thm}\label{thmexisV}
Suppose that $0\leq u_0\in H^s(\R^d)\cap L^1(\R^d)$ ($s>\frac{d}{2}+1$) and $\|u_0\|_\infty< \frac{a}{2b}$, then problem \eqref{probinter} has a unique solution $u^\varepsilon\in L^\infty(0, \infty; H^s(\R^d)\cap L^1(\R^d))\cap L^2(0, \infty;H^{s+1}(\R^d))$ and $0\leq u^\varepsilon (x,t)<\frac{a}{2b}$ with the following estimates
\begin{eqnarray}
\|u^\varepsilon(\cdot,t)\|_{L^1(\R^d)}= \|u_0\|_{L^1(\R^d)},\quad
 \|u^\varepsilon\|_{L^\infty(0,\infty; L^2(\R^d))}+\|Du^\varepsilon\|_{L^2(0,\infty;L^2(\R^d))}\leq C,
\end{eqnarray}
where $C$ is a constant independent of $\varepsilon$.
\end{thm}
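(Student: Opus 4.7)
The plan is to combine local well-posedness in $H^s(\R^d)$ with an $L^p$-type estimate that propagates the subcritical constraint $u^\varepsilon<a/(2b)$ and in turn closes $\varepsilon$-uniform energy bounds, allowing global continuation. For local existence I would work with the mild formulation
\[
u^\varepsilon(t)=e^{at\Delta}u_0+\int_0^t e^{a(t-s)\Delta}\,\nabla\cdot\bigl(u^\varepsilon\,\nabla V_\varepsilon*u^\varepsilon\bigr)(s)\,ds,
\]
or equivalently a Galerkin/vanishing-viscosity scheme with the Kato--Majda commutator estimates \eqref{ineq1}--\eqref{ineq2}: since $V_\varepsilon\in C^\infty\cap W^{k,1}$ for every $k$, the nonlinearity is locally Lipschitz from $H^s$ into $H^{s-1}$, yielding a unique solution in $C([0,T_\varepsilon];H^s)\cap L^2(0,T_\varepsilon;H^{s+1})$ on a short time interval. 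Uniqueness on a common interval follows from a standard $L^2$-Gronwall argument for the difference of two solutions. Nonnegativity is propagated by testing with $-(u^\varepsilon)_-$ and using the local boundedness of the drift $\nabla V_\varepsilon*u^\varepsilon$; mass conservation $\|u^\varepsilon(\cdot,t)\|_{L^1}=\|u_0\|_{L^1}$ is immediate from the divergence form of \eqref{probinter} together with nonnegativity.

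The central step is propagation of the sharp upper bound $u^\varepsilon<a/(2b)$. I would test \eqref{probinter} with $(u^\varepsilon)^{p-1}$ for integer $p\geq 2$ and integrate by parts to obtain
\[
\frac{1}{p}\frac{d}{dt}\|u^\varepsilon\|_{L^p}^p+a(p-1)\int (u^\varepsilon)^{p-2}|\nabla u^\varepsilon|^2\,dx=(p-1)\int(u^\varepsilon)^{p-1}\nabla u^\varepsilon\cdot(V_\varepsilon*\nabla u^\varepsilon)\,dx,
\]
using the convolution identity $\nabla V_\varepsilon*u^\varepsilon=V_\varepsilon*\nabla u^\varepsilon$. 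Exploiting the symmetry of $V_\varepsilon$ and the normalization $\int V_\varepsilon=2b$, the right-hand side should be decomposed as $2b(p-1)\int (u^\varepsilon)^{p-1}|\nabla u^\varepsilon|^2\,dx$ plus a convolution remainder which, via Young's inequality and the $L^1$-normalization of $V_\varepsilon$, is absorbable into the same diffusion quantity. This reveals the effective coefficient $(a-2bu^\varepsilon)$, strictly positive precisely when $u^\varepsilon<a/(2b)$. A continuation argument then precludes a first finite time $T^*$ at which $\|u^\varepsilon(\cdot,t)\|_{L^\infty}$ could attain $a/(2b)$: on $[0,T^*]$, sending $p\to\infty$ in the resulting $L^p$-inequality forces $\|u^\varepsilon(\cdot,T^*)\|_{L^\infty}\le\|u_0\|_{L^\infty}<a/(2b)$, contradicting the definition of $T^*$.

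With the $L^\infty$ bound secured, taking $p=2$ in the same identity and integrating in time yields the $\varepsilon$-uniform $L^\infty_tL^2_x\cap L^2_tH^1_x$ estimate. The full $H^s$-bound is closed by applying $D^\alpha$ with $|\alpha|=s$ to \eqref{probinter}, testing against $D^\alpha u^\varepsilon$, and invoking the commutator inequalities \eqref{ineq1}--\eqref{ineq2} together with the already-controlled $\|u^\varepsilon\|_{L^\infty}$. A Gronwall inequality for $\|u^\varepsilon\|_{H^s}^2$ prevents blow-up of the higher-order norm and allows the local solution to be extended to $[0,\infty)$.

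The principal difficulty is the sharp $L^\infty$-bound: the nonlocality of the drift rules out a direct application of the pointwise parabolic maximum principle, so one must extract the effective diffusion structure $(a-2bu^\varepsilon)|\nabla u^\varepsilon|^2$ from the $L^p$-identity while carefully absorbing the convolution remainder. The symmetry $V_\varepsilon(-x)=V_\varepsilon(x)$ and the normalization $\int V_\varepsilon=2b$ enter essentially at this step, and it is here that the regularity provided by the $H^{s+1}$-bound from the local theory is crucial.
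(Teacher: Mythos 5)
Your skeleton---local well-posedness, nonnegativity and mass conservation, an $L^2$ energy estimate with the sharp constant, and an $H^s$ Gronwall bound to continue the solution globally---matches the paper, and your $p=2$ computation is exactly the paper's uniform-in-$\varepsilon$ estimate (Step~3 of its proof). The genuine gap is your central step, the propagation of the $L^\infty$ bound by an $L^p\to L^\infty$ iteration. In your identity the term $2b(p-1)\int(u^\varepsilon)^{p-1}|\nabla u^\varepsilon|^2dx$ appears with the unfavourable sign and can only be dominated by the dissipation $a(p-1)\int(u^\varepsilon)^{p-2}|\nabla u^\varepsilon|^2dx$ where $2bu^\varepsilon\le a$ already holds (acceptable as a continuation hypothesis), but the convolution remainder $(p-1)\int(u^\varepsilon)^{p-1}\nabla u^\varepsilon\cdot\bigl(V_\varepsilon*\nabla u^\varepsilon-2b\nabla u^\varepsilon\bigr)dx$ is not absorbable by ``Young plus $\|V_\varepsilon\|_{L^1}=2b$'': Young's inequality only gives $\|V_\varepsilon*\nabla u^\varepsilon-2b\nabla u^\varepsilon\|_{L^2}\le 4b\|\nabla u^\varepsilon\|_{L^2}$ with no smallness, while smallness of order $\varepsilon$ costs a factor $\|D^2u^\varepsilon\|_{L^2}$ (precisely the mechanism of Lemma \ref{lemuuep}) and still leaves a positive Gronwall-type term. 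Consequently, letting $p\to\infty$ yields at best a time-growing bound on $\|u^\varepsilon(t)\|_{L^\infty}$, not $\|u^\varepsilon(\cdot,T^*)\|_{L^\infty}\le\|u_0\|_{L^\infty}$, so the contradiction at $T^*$ never materializes. Worse, the monotonicity you aim for is generally false at fixed $\varepsilon>0$: choosing $u_0$ with a degenerate (quartic-flat) interior maximum at $x_0$, one has $\nabla u_0(x_0)=0$, $\Delta u_0(x_0)=0$ but $(V_\varepsilon*\Delta u_0)(x_0)<0$, hence $\partial_t u^\varepsilon(x_0,0)=a\Delta u_0(x_0)-u_0(x_0)(V_\varepsilon*\Delta u_0)(x_0)>0$ and the supremum strictly exceeds $\|u_0\|_{L^\infty}$ for small times; no sharpening of the $L^p$ argument can deliver $\|u^\varepsilon(t)\|_{L^\infty}\le\|u_0\|_{L^\infty}$.

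The paper avoids this issue by never proving a maximum principle for the nonlinear nonlocal equation: it fixes $M=\frac{a}{2b}-\eta$ with $u_0\le M$ and enforces $0\le u\le M$ at the level of the linearized, truncated problem used in the fixed-point map, namely $\partial_t u-a\Delta u+\nabla\cdot(u_{+,M}\nabla V_\varepsilon*w)=0$ with $u_{+,M}=\min\{u_+,M\}$ and $w$ a given element of the fixed-point space, testing with $u_-$ and $(u-M)_+$; the truncation is removed a posteriori because the fixed point obeys the bound. If you wish to keep your route, you would need an analogous truncation or regularization built into the equation before any $L^\infty$ information is available (or content yourself with a bound $u^\varepsilon\le\frac{a}{2b}-\eta'$ for some smaller $\eta'$, which your argument as written also does not provide). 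Note finally that your $p=2$ step needs the quantitative gap $\|u^\varepsilon\|_{L^\infty}\le\frac{a}{2b}-\eta$, not merely $u^\varepsilon<\frac{a}{2b}$, in order to retain the uniform dissipation $4b\eta\|\nabla u^\varepsilon\|_{L^2(0,\infty;L^2)}^2$ asserted in the theorem.
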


\begin{proof}
We use the standard Banach fixed-point theorem to prove the local existence of solutions.
Furthermore, we give additional estimates for any fixed $\varepsilon$, so that the existence
can be extended to arbitrary long times.
In the end of the proof, we present the uniform in $\varepsilon$ estimates of the solution.

{\bf Step 1:} Local existence of solution for any fixed $\varepsilon$.
Without loss of generality, we assume $0\leq u_0(x)\leq \frac{a}{2b}-\eta$ with $0<\eta\ll 1$. Let
\begin{eqnarray*}
\mathcal X&=&\Big\{u\in L^\infty(0,T^*;H^s(\R^d)\cap L^1(\R^d)): u(x,0)=u_0(x), \\
&&\hspace{1cm} \|u\|_{L^\infty(0,T^*;H^s(\R^d))}\leq 2\|u_0\|_{H^s(\R^d)}:=\tilde M
,\,0\leq u(x,t)\leq \frac{a}{2b}-\eta:=M,
\\
&&\hspace{1cm} \|u(\cdot,t)\|_{L^1(\R^d)}=\|u(\cdot,t)\|_{L^1(\R^d)} \Big\}
\end{eqnarray*}
with metric $d(u,w)=\sup_{0\leq t\leq T^*}\|u-w\|_{L^2}$, where $T^*$ is to be determined later.

Now we define a map $\mathcal T: \mathcal X \rightarrow \mathcal X$ as follows: For all $w\in \mathcal X$, let $u$ be the unique solution of the following Cauchy problem
\begin{eqnarray}\label{Line}
\partial_t u-a\Delta u +\nabla\cdot (u_{+,M} \nabla V_\varepsilon* w)=0, && x\in\R^d ,\\
u(x,0)=u_0(x),&&\nonumber
\end{eqnarray}
where $u_{+,M}=\min\{u_+,M\}$, $u_+=\max\{0,u\}$.
It is easy to see that the solution of \eqref{Line} has the property of conservation of mass, i.e.
\begin{eqnarray*}
\dint_{\R^d} u(x,t)dx =\dint_{\R^d} u_0(x)dx.
\end{eqnarray*}

Let $\alpha$ be an arbitrary multi-index with $|\alpha|\leq s$. Applying the operator $D^\alpha$ on both sides of equation \eqref{Line}, multiplying by $D^\alpha u$ and integrating on $\R^d$ leads to
\begin{eqnarray*}
&&\dfrac{1}{2}\dfrac{d}{dt}\dint_{\R^d}|D^\alpha u|^2dx +a\dint_{\R^d}|\nabla D^\alpha u|^2 dx\\[1ex]
&=& \dint_{\R^d} D^\alpha (u_{+,M}\nabla (V_\varepsilon*w))\cdot \nabla D^\alpha u dx\\[1ex]
&\leq & \|D^\alpha (u_{+,M}\nabla (V_\varepsilon*w))\|_{L^2}\|\nabla D^\alpha u\|_{L^2}\\[1ex]
&\leq& C\big(\|u_{+,M}\|_{L^\infty}\|D^s\nabla(V_\varepsilon *w)\|_{L^2}+\|D^su_{+,M}\|_{L^2}\|\nabla(V_\varepsilon *w)\|_{L^\infty}\big) \|\nabla D^\alpha u\|_{L^2}\\[1ex]
&\leq& \frac{a}{2}\|\nabla D^\alpha u\|_{L^2}^2 + C\big(\|u\|_{L^\infty}^2\|\nabla V_\varepsilon *D^s w\|_{L^2}^2+ \|D^su\|_{L^2}^2\|\nabla V_\varepsilon * w\|_{L^\infty}^2\big).
\end{eqnarray*}
where inequality \eqref{ineq1} is applied. By Hardy-Littlewood-Sobolev inequality and the Sobolev embedding $H^s\hookrightarrow L^\infty$, it follows
\begin{eqnarray*}
&&\dfrac{d}{dt}\dint_{\R^d}|D^\alpha u|^2dx +a\dint_{\R^d}|\nabla D^\alpha u|^2 dx\\[1ex]
&\leq &C(\|D^su\|^2_{L^2}\|\nabla V_\varepsilon\|_{L^1}^2 \|D^s w\|_{L^2}^2+ \|D^su\|_{L^2}^2\|\nabla V_\varepsilon\|_{L^1}^2 \|w\|_{L^\infty}^2)\\[1ex]
&\leq & C(\varepsilon) \|w\|_{L^\infty(0,T^*;H^s)}^2 \|D^s u\|_{L^2}^2.
\end{eqnarray*}
Taking the summation of all multi-index $|\alpha|\leq s$ on both sides, we get
\begin{eqnarray*}
&&\dfrac{d}{dt}\|u\|_{H^s}^2 +a\|u\|_{H^{s+1}}^2\leq C(\varepsilon, \tilde M) \|u\|_{H^s}^2.
\end{eqnarray*}
By Gronwall's inequality, we have
\begin{eqnarray*}
\sup_{0\leq t\leq T^*}\|u(\cdot,t)\|_{H^s}^2+a\|u\|_{L^2(0,T^*;H^{s+1})}^2\leq \|u_0\|_{H^s}^2 T^*e^{C(\varepsilon, \tilde M)  T^*}\leq \tilde M,
\end{eqnarray*}
where $T^*\leq T_1$ has been taken so small that $T_1e^{C(\varepsilon, \tilde M)  T_1}\leq 2$.

Next, we have to check that $0\leq u\leq M$. To do so, we use $u_-=-\min\{0,u\}$ as a test function, i.e.
\begin{eqnarray*}
\frac{1}{2}\dfrac{d}{dt}\dint_{\R^d} |u_-|^2dx +a\dint_{\R^d}|\nabla u_-|^2dx=0.
\end{eqnarray*}
Due to the non-negativity of the initial data $u_0$, we get $u(t,\cdot)\geq 0$ almost everywhere in $\R^d$. Similarly, using $u_M=(u-M)_+$ as a test function, we get
\begin{eqnarray*}
\frac{1}{2}\dfrac{d}{dt}\dint_{\R^d} |u_M|^2dx +a\dint_{\R^d}|\nabla u_M|^2dx=0.
\end{eqnarray*}
Due to the fact that $u_0\leq M$, we can conclude $u_M=0$ which means equivalently $u(t,\cdot)\leq M$ almost everywhere in $\R^d$.
In this way, we have built a map $\mathcal T$ from $\mathcal X$ to $\mathcal X$.

Now, we show that the map $\mathcal T$ is a contraction for a short time $T^*$ that depends on $\varepsilon$, $M$ and $\tilde M$.
Let $u_1=\mathcal T w_1$ and $u_2=\mathcal T w_2$, then we take the difference of the two equations, use $u_1-u_2$ as a test function and integrate on $\R^d$:
\begin{eqnarray*}
&&\frac{1}{2}\dfrac{d}{dt}\dint_{\R^d} |u_1-u_2|^2dx +a\dint_{\R^d}|\nabla (u_1-u_2)|^2dx\\
&=&\dint_{\R^d} (u_1-u_2)\nabla V_\varepsilon *w_1 \cdot\nabla (u_1-u_2)dx+\dint_{\R^d} u_2\nabla V_\varepsilon *(w_1-w_2) \cdot\nabla (u_1-u_2)dx\\
&\leq & \frac{a}{2}\dint_{\R^d}|\nabla (u_1-u_2)|^2dx+ C\|\nabla V_\varepsilon * w_1\|_{L^\infty}^2 \dint_{\R^d} |u_1-u_2|^2dx \\
&&\hspace{5cm}+C \|u_2\|_{L^\infty}^2 \dint_{\R^d} |\nabla V_\varepsilon * (w_1-w_2)|^2 dx.
\end{eqnarray*}
The Hardy-Littlewood-Sobolev inequality implies
\begin{eqnarray*}
&&\dfrac{d}{dt}\dint_{\R^d} |u_1-u_2|^2dx +a\dint_{\R^d}|\nabla (u_1-u_2)|^2dx\\
&\leq & C\|\nabla V_\varepsilon\|_{L^1}^2 \|w_1\|_{L^\infty}^2 \dint_{\R^d} |u_1-u_2|^2dx +C\|u_2\|_{L^\infty}^2 \|\nabla V_\varepsilon\|_{L^1}^2 \dint_{\R^d} |w_1-w_2|^2dx.
\end{eqnarray*}
Notice that $\|u_1(\cdot,0)-u_2(\cdot,0)\|_{L^2}=0$ and Gronwall's inequality leads to
\begin{eqnarray*}
\sup_{0\leq t\leq T^*} \|u_1(\cdot,t)-u_2(\cdot,t)\|_{L^2} \leq T^* e^{C(\varepsilon,M)T^*} \sup_{0\leq t\leq T^*} \|w_1-w_2\|_{L^2},
\end{eqnarray*}
which means for $T^*\leq T_1$ and $T^* e^{C(\varepsilon,M)T^*}\leq \frac12$, the map $\mathcal T$ is a contraction.

By Banach fixed-point theorem, the map $\mathcal T$ has a unique fixed-point in time interval $0\leq t\leq T^*(\varepsilon, M, \tilde M)$. Let $u^\varepsilon$ be the unique solution of
\begin{eqnarray*}
\partial_t u^\varepsilon-a\Delta u^\varepsilon +\nabla\cdot ((u^\varepsilon)_{+,M} \nabla V_\varepsilon* u^\varepsilon)=0, && x\in\R^d\, t\in (0,T^*),\\
u^\varepsilon(x,0)=u_0(x),&& 0\leq u_0(x)\leq M.
\end{eqnarray*}
Since we have shown that $0\leq u^\varepsilon\leq M$ in $ (0,T^*)\times\R^d$, we can replace $(u^\varepsilon)_{+,M}$ in the equation by $u^\varepsilon$ itself.

{\bf Step 2:} Global solution. According to the local existence result that we have obtained in step 1, there exists a maximum existence time $\hat T$ such that
\begin{eqnarray}\label{blowup}
\lim_{t\rightarrow\hat T}\|u(\cdot,t)\|_{H^s}=\infty.
\end{eqnarray}
With the help of $0\leq u\leq M$, we can show that the above blow-up criteria will not happen in finite time, which means that $\hat T=\infty.$ 
This can be again proved by using energy estimates for all $t<\hat T$ and any multi-index $\alpha$ with $|\alpha|\leq s$. Applying the operator $D^\alpha$ on both sides of equation \eqref{probinter}, multiplying by $D^\alpha u$ and integrating on $\R^d$ leads to
\begin{eqnarray*}
&&\dfrac{1}{2}\dfrac{d}{dt}\dint_{\R^d}|D^\alpha u|^2dx +a\dint_{\R^d}|\nabla D^\alpha u|^2 dx\\[1ex]
&=& \dint_{\R^d} D^\alpha (u_{+,M}\nabla (V_\varepsilon*u))\cdot \nabla D^\alpha u dx\\[1ex]
&\leq & \|D^\alpha (u\nabla (V_\varepsilon*u))\|_{L^2}\|\nabla D^\alpha u\|_{L^2}\\[1ex]
&\leq& C\big(\|u\|_{L^\infty}\|D^s\nabla(V_\varepsilon *u)\|_{L^2}+\|D^su\|_{L^2}\|\nabla(V_\varepsilon *u)\|_{L^\infty}\big) \|\nabla D^\alpha u\|_{L^2}\\[1ex]
&\leq& \frac{a}{2}\|\nabla D^\alpha u\|_{L^2}^2 + C\big(M^2\|\nabla V_\varepsilon *D^s u\|_{L^2}^2+ \|D^su\|_{L^2}^2\|\nabla V_\varepsilon * u\|_{L^\infty}^2\big)\\[1ex]
&\leq &\frac{a}{2}\|\nabla D^\alpha u\|_{L^2}^2 + 2M^2\|\nabla V_\varepsilon\|_{L^1}^2 \|D^s u\|_{L^2}^2.
\end{eqnarray*}
Taking the summation of all multi-index $|\alpha|\leq s$ on both sides, we get
\begin{eqnarray*}
&&\dfrac{d}{dt}\|u\|_{H^s}^2 +a\|u\|_{H^{s+1}}^2\leq C(\varepsilon, M) \|u\|_{H^s}^2.
\end{eqnarray*}
Gronwall's inequality leads to
\begin{eqnarray*}
\sup_{0\leq t<\hat T}\|u(\cdot,t)\|_{H^s}^2+a\|u\|_{L^2(0,\hat T;H^{s+1})}^2\leq \|u_0\|_{H^s}^2 e^{C(\varepsilon, M) \hat T}.
\end{eqnarray*}
If $\hat T$ is finite, the above estimate contradicts the blow-up criteria \eqref{blowup}. Therefore, the solution exists globally for any fixed $\varepsilon$.

{\bf Step 3:} Uniform in $\varepsilon$ estimates.
Let $u^\varepsilon\in \mathcal X$ be the solution of the following problem
\begin{eqnarray}\label{probepsilon}
\partial_t u^\varepsilon-a\Delta u^\varepsilon +\nabla\cdot (u^\varepsilon V_\varepsilon* \nabla u^\varepsilon)=0, && x\in\R^d , \\
u^\varepsilon(x,0)=u_0(x).&&\nonumber
\end{eqnarray}
The conservation of mass is satisfied, i.e.
\begin{eqnarray}\label{mass}
\dint_{\R^d} u^\varepsilon(x,t)dx =\dint_{\R^d} u_0(x)dx.
\end{eqnarray}

Multiplying the equation by $u^\varepsilon$, integrating on $\R^d$ and applying Hardy-Littlewood-Sobolev inequality yields
\begin{eqnarray*}
&&\dfrac{1}{2}\dfrac{d}{dt}\dint_{\R^d} |u^\varepsilon|^2 dx +a\dint_{\R^d}|\nabla u^\varepsilon|^2 dx =\dint_{\R^d} u^\varepsilon V_\varepsilon*\nabla u^\varepsilon \cdot\nabla u^\varepsilon dx\\
&\leq &(\frac{a}{2b}-\eta)\dint\dint_{\R^d\times\R^d}\Big|V_\varepsilon(x-y)\nabla u^\varepsilon(x,t) \nabla u^\varepsilon(y,t)\Big| dxdy\\
&\leq & (a-2b\eta)\dint_{\R^d}|\nabla u^\varepsilon|^2 dx.
\end{eqnarray*}
Therefore, we get that
$$
\sup_{ t\geq 0}\|u^\varepsilon(\cdot,t)\|_{L^2}^2+4b \eta \|\nabla u^\varepsilon\|_{L^2(0,\infty;L^2(\R^d))}^2\leq \|u_0\|_{L^2}^2.
$$ \end{proof}

The next theorem states a uniform estimate for the solution to the intermediate problem. 
\begin{thm}\label{estVsmallID}
Let $u^\varepsilon$ be the solution of \eqref{probinter}, then there exists a constant $K$ (depending on $s$ and $d$) such that for $\|u_0\|_{H^s}< \frac{a}{bK}$, the following uniform estimate in $\varepsilon$ holds
\begin{eqnarray}
 \sup_{t\geq 0}\|u^\varepsilon\|_{H^s(\R^d)} +\|Du^\varepsilon\|_{L^2(0,\infty;H^{s}(\R^d))}\leq C,
\end{eqnarray}
where $C$ is a constant independent of $\varepsilon$.
\end{thm}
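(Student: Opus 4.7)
The strategy is to extend the $L^2$ energy estimate from Step 3 of the proof of Theorem \ref{thmexisV} to the $H^s$ level and then close the resulting differential inequality via a continuation/bootstrap argument that exploits the smallness of $\|u_0\|_{H^s}$. The essential principle throughout the estimate is: never invoke $\|\nabla V_\varepsilon\|_{L^1}$ (which diverges as $\varepsilon\to 0$); only use $\|V_\varepsilon\|_{L^1}=2b$, via Young's convolution inequality. This is what makes the final estimate $\varepsilon$-uniform.

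Concretely, for each multi-index $\alpha$ with $|\alpha|\le s$, apply $D^\alpha$ to \eqref{probinter}, test by $D^\alpha u^\varepsilon$, and decompose the nonlinear term as
$$D^\alpha\bigl(u^\varepsilon\,\nabla V_\varepsilon\ast u^\varepsilon\bigr)=u^\varepsilon\,\nabla V_\varepsilon\ast D^\alpha u^\varepsilon+\bigl[D^\alpha,u^\varepsilon\bigr]\bigl(\nabla V_\varepsilon\ast u^\varepsilon\bigr).$$
The first, symmetric piece is controlled by exactly the AM-GM double-integral trick used in Step 3 of Theorem \ref{thmexisV}, yielding $\int u^\varepsilon\,\nabla V_\varepsilon\ast D^\alpha u^\varepsilon\cdot\nabla D^\alpha u^\varepsilon\,dx\le 2b\|u^\varepsilon\|_{L^\infty}\|\nabla D^\alpha u^\varepsilon\|_{L^2}^2$. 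For the commutator, apply \eqref{ineq2}, pull out each factor involving $V_\varepsilon$ by Young's inequality (producing $2b$), and use the Sobolev embedding $H^s\hookrightarrow W^{1,\infty}$ (valid for $s>d/2+1$) to dominate $\|Du^\varepsilon\|_{L^\infty}$ and $\|V_\varepsilon\ast\nabla u^\varepsilon\|_{L^\infty}$ by $\|u^\varepsilon\|_{H^s}$. The outcome is $\|[D^\alpha,u^\varepsilon](\nabla V_\varepsilon\ast u^\varepsilon)\|_{L^2}\le Cb\|u^\varepsilon\|_{H^s}^2$. After summation over $|\alpha|\le s$ and one application of Young's inequality to absorb a factor of $\|\nabla D^\alpha u^\varepsilon\|_{L^2}$, one arrives at
$$\frac{d}{dt}\|u^\varepsilon\|_{H^s}^2+\Bigl(2a-4b\|u^\varepsilon\|_{L^\infty}-\tfrac{a}{2}\Bigr)\|\nabla u^\varepsilon\|_{H^s}^2\le C'\,\frac{b^2}{a}\,\|u^\varepsilon\|_{H^s}^4,$$
with $C'$ depending only on $s$ and $d$.

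To close this inequality, run a continuity argument: set $T^\star:=\sup\{t\ge 0:\|u^\varepsilon(t)\|_{H^s}\le 2\|u_0\|_{H^s}\}$. On $[0,T^\star]$, the Sobolev bound $\|u^\varepsilon\|_{L^\infty}\le K_0\|u^\varepsilon\|_{H^s}$ combined with the hypothesis $\|u_0\|_{H^s}<a/(bK)$ gives $4b\|u^\varepsilon\|_{L^\infty}\le 8K_0a/K\le a/2$ once $K\ge 16K_0$, so the coefficient in front of the dissipation stays $\ge a$. The quartic RHS is estimated by $\|u^\varepsilon\|_{H^s}^2\le 4a^2/(b^2K^2)$ to give an RHS of order $(a/K^2)\,\|u^\varepsilon\|_{H^s}^2$. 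To feed this $\|u^\varepsilon\|_{H^s}^2$ back from the dissipation, one uses the elementary inequality $\|\nabla u\|_{H^s}^2\ge c_s(\|u\|_{H^s}^2-\|u\|_{L^2}^2)$ together with the uniform $L^2$ bound $\|u^\varepsilon\|_{L^2}\le\|u_0\|_{L^2}$ from Theorem \ref{thmexisV}; Gronwall then yields $\|u^\varepsilon(t)\|_{H^s}^2\le\tfrac{3}{2}\|u_0\|_{H^s}^2<(2\|u_0\|_{H^s})^2$, strictly, on $[0,T^\star]$. This contradicts the maximality of $T^\star$ unless $T^\star=\infty$, and integrating the energy inequality in time delivers the $L^2(0,\infty;H^{s+1})$ bound on $Du^\varepsilon$.

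The main obstacle is the absence of a Poincaré inequality on $\R^d$: the dissipation $a\|\nabla u^\varepsilon\|_{H^s}^2$ does not dominate $\|u^\varepsilon\|_{H^s}^2$, so the quartic RHS cannot be absorbed purely by diffusion. The resolution is to borrow the low-frequency information from the $L^2$ bound of Theorem \ref{thmexisV} and to choose $K$ large in terms of the Sobolev constants $K_0$ and $c_s$ so that both $4b\|u^\varepsilon\|_{L^\infty}$ and the quartic coefficient sit strictly below the dissipation; this is the sole role of the smallness assumption on $\|u_0\|_{H^s}$.
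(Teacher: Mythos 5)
Your setup coincides with the paper's up to the commutator: the same decomposition into the symmetric piece plus $[D^\alpha,u^\varepsilon](\nabla V_\varepsilon\ast u^\varepsilon)$, the same bound $2b\|u^\varepsilon\|_{L^\infty}\|\nabla D^\alpha u^\varepsilon\|_{L^2}^2$ for the symmetric piece, and the same principle of only ever using $\|V_\varepsilon\|_{L^1}=2b$. The divergence --- and the gap --- is in the commutator. The paper keeps its derivative structure: after \eqref{ineq2} and Young it retains $Cb\,\|Du^\varepsilon\|_{L^\infty}\|D^su^\varepsilon\|_{L^2}\|\nabla D^\alpha u^\varepsilon\|_{L^2}$ and then interpolates (Gagliardo--Nirenberg) to rewrite this as $K\,2b\,\|u^\varepsilon\|_{H^s}\|D^{s+1}u^\varepsilon\|_{L^2}^2$, i.e.\ a small factor times the dissipation. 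This yields $\frac{d}{dt}\|u^\varepsilon\|_{H^s}^2+\bigl(2a-2bK\|u^\varepsilon\|_{H^s}\bigr)\|\nabla u^\varepsilon\|_{H^s}^2\le 0$, so on the invariant region $\|u^\varepsilon\|_{H^s}<a/(bK)$ the $H^s$ norm is non-increasing, and integrating the leftover dissipation with coefficient $2a-2bK\|u_0\|_{H^s}>0$ gives the $L^2(0,\infty;H^s)$ bound on $Du^\varepsilon$ at once. By flattening the commutator to $Cb\|u^\varepsilon\|_{H^s}^2$ and splitting off $\|\nabla D^\alpha u^\varepsilon\|_{L^2}$ with Young, you decouple the right-hand side from the dissipation and create the quartic $C'\frac{b^2}{a}\|u^\varepsilon\|_{H^s}^4$; this is precisely what manufactures the Poincar\'e problem you then have to work around.

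Your bootstrap does recover the first half of the conclusion, $\sup_{t}\|u^\varepsilon\|_{H^s}\le C$ (modulo constant bookkeeping: the damped Gronwall gives a bound like $\max\{\|u_0\|_{H^s}^2,\,2\|u_0\|_{L^2}^2\}\le 2\|u_0\|_{H^s}^2$ rather than $\tfrac32\|u_0\|_{H^s}^2$, but this is still strictly below $4\|u_0\|_{H^s}^2$, so the continuity argument closes). The genuine gap is the second half of the stated estimate: from $\frac{d}{dt}\|u^\varepsilon\|_{H^s}^2+a\|\nabla u^\varepsilon\|_{H^s}^2\le C'\frac{b^2}{a}\|u^\varepsilon\|_{H^s}^4$ you cannot ``integrate in time'' to conclude $\|Du^\varepsilon\|_{L^2(0,\infty;H^{s})}\le C$. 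The quartic right-hand side is merely bounded, not time-integrable over $(0,\infty)$; even after substituting your low-frequency splitting it leaves a contribution of order $\frac{a}{K^2}\|u^\varepsilon\|_{L^2}^2$, and $\|u^\varepsilon(t)\|_{L^2}$ is only known to be bounded (by Theorem \ref{thmexisV}), not to decay in a time-integrable way. So your argument only gives $\int_0^T\|\nabla u^\varepsilon\|_{H^s}^2\,dt\le C(1+T)$, which is weaker than the theorem. The repair stays inside your framework: do not discard the derivative in the commutator --- use $\|D^su^\varepsilon\|_{L^2}\le\|\nabla u^\varepsilon\|_{H^{s-1}}$ together with $\|Du^\varepsilon\|_{L^\infty}\le C\|u^\varepsilon\|_{H^s}$, so the commutator contribution is $\le Cb\,\|u^\varepsilon\|_{H^s}\|\nabla u^\varepsilon\|_{H^s}^2$. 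This restores the ``small factor times dissipation'' structure (which is exactly what the paper's interpolation step achieves), after which both the uniform-in-time bound and the global dissipation bound follow with no Poincar\'e-type difficulty and no need to borrow the $L^2$ estimate.
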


\begin{proof}
Applying the differential operator $D^\alpha$ for the multi-index $|\alpha|\leq s$, multiplying by $D^\alpha u^\varepsilon$ and integrating over $\R^d$ leads to
\begin{eqnarray*}
&&\dfrac{1}{2}\dfrac{d}{dt}\dint_{\R^d} |D^\alpha u^\varepsilon|^2 dx +a\dint_{\R^d}|\nabla D^\alpha u^\varepsilon|^2 dx \\
&=& \dint_{\R^d} u^\varepsilon V_\varepsilon*\nabla D^\alpha u^\varepsilon \cdot\nabla D^\alpha u^\varepsilon dx +\dint_{\R^d} [D^\alpha, u^\varepsilon] \nabla (V_\varepsilon* u^\varepsilon)\nabla D^\alpha u^\varepsilon\\
&\leq & \|u^\varepsilon\|_{L^\infty}\dint\dint_{\R^d\times\R^d}\Big|V_\varepsilon(x-y)\nabla D^\alpha u^\varepsilon(x,t) \nabla D^\alpha u^\varepsilon(y,t)\Big| dxdy\\
&& +\big\|[D^\alpha, u^\varepsilon] \nabla (V_\varepsilon* u^\varepsilon)\big\|_{L^2} \big\|\nabla D^\alpha u^\varepsilon\big\|_{L^2}.
\end{eqnarray*}
The Hardy-Littlewood-Sobolev inequality and \eqref{ineq2} for the commutator $[D^\alpha, u^\varepsilon]$, we get
\begin{eqnarray*}
&&\dfrac{1}{2}\dfrac{d}{dt}\dint_{\R^d} |D^\alpha u^\varepsilon|^2 dx +a\dint_{\R^d}|\nabla D^\alpha u^\varepsilon|^2 dx \\
&\leq &\|u^\varepsilon\|_{L^\infty} \|V_\varepsilon\|_{L^1} \dint_{\R^d}|\nabla D^\alpha u^\varepsilon|^2 dx \\
&& + C\big(\|Du^\varepsilon\|_{L^\infty} \|D^{s-1}\nabla (V_\varepsilon * u^\varepsilon)\|_{L^2} +\|\nabla(V_\varepsilon*u^\varepsilon)\|_{L^\infty}\|D^s u^\varepsilon\|_{L^2}\big) \|\nabla D^\alpha u^\varepsilon\|_{L^2}\\[1ex]
&\leq & \|u^\varepsilon\|_{L^\infty} \|V_\varepsilon\|_{L^1}\|\nabla D^\alpha u^\varepsilon\|_{L^2}^2  + C \|V_\varepsilon\|_{L^1} \|Du^\varepsilon\|_{L^\infty}\|D^s u^\varepsilon\|_{L^2}   \|\nabla D^\alpha u^\varepsilon\|_{L^2},
\end{eqnarray*}
where the constant $C$ depends on $d$ and $s$.
The Gagliardo-Nirenberg-Sobolev inequality gives the following two estimates
\begin{eqnarray*}
\|D^s u\|_{L^2}\leq K\|u\|_{L^\infty}^\frac{2}{2s+2-d} \|D^{s+1}u\|_{L^2}^\frac{2s-d}{2s+2-d},\\[1ex]
\|D u\|_{L^\infty}\leq K\|u\|_{L^\infty}^\frac{2s-d}{2s+2-d} \|D^{s+1}u\|_{L^2}^\frac{2}{2s+2-d},
\end{eqnarray*}
where $K$ depends on $d$ and $s$.
Hence, we have the following estimate
\begin{eqnarray*}
&&\dfrac{d}{dt}\dint_{\R^d} |D^\alpha u^\varepsilon|^2 dx +2a\dint_{\R^d}|\nabla D^\alpha u^\varepsilon|^2 dx \leq  K \|V_\varepsilon\|_{L^1} \| u^\varepsilon\|_{L^\infty}\| D^{s+1} u^\varepsilon\|_{L^2}^2.
\end{eqnarray*}
After summing up the multi-index $|\alpha|\leq s$ and the use of the Sobolev embedding theorem $H^s\hookrightarrow L^\infty$, we end up with
\begin{eqnarray*}
\dfrac{d}{dt}\|u^\varepsilon\|_{H^s}^2 +2a\|\nabla u^\varepsilon\|_{H^s}^2 &\leq & K \|V_\varepsilon\|_{L^1} \|u^\varepsilon\|_{L^\infty}\|D^{s+1} u^\varepsilon\|_{L^2}^{2}\\
&\leq & K \|V_\varepsilon\|_{L^1} \|u^\varepsilon\|_{H^s}\|D^{s+1} u^\varepsilon\|_{L^2}^{2},
\end{eqnarray*}
where $\|V\|_{L^1}=2b$ and $K$ is a constant that only depends on $d$ and $s$. As a consequence, for initial data
$\|u_0\|_{H^s}< \frac{a}{bK}$, we obtain the global uniform estimate in $\varepsilon$, cf. \eqref{estVsmallID}.
\end{proof}

In the next subsection, we discuss the global solvability of the limiting problem to \eqref{probinter} for $\varepsilon\rightarrow 0$.

\subsection{Solvability of the limiting problem}
The limiting problem we are interested in is the following macroscopic diffusion-aggregation equation 
\begin{eqnarray}\label{eqn}
\partial_t u-\nabla(a\nabla u-2bu\nabla u)=0,&& x\in\R^d , \\
u(x,0)=u_0(x).\nonumber
\end{eqnarray}
Similar to our previous investigations, we study the existence and uniqueness of solutions to this equation. 
\begin{thm}\label{existWeak}
For any initial data $u_0\in L^1(\R^d)\cap L^\infty(\R^d)$ and $\|u_0\|_{L^\infty}<\frac{a}{2b}$, the Cauchy problem \eqref{eqn} has a non-negative weak solution in $L^\infty(0,\infty;L^1(\R^d)\cap L^\infty(\R^d))\cap L^2(0,\infty; H^1(\R^d))$ and
\begin{eqnarray}
\|u(\cdot,t)\|_{L^1(\R^d)}=\|u_0\|_{L^1(\R^d)}, \quad
\|u(\cdot,t)\|_{L^\infty(\R^d)}<\frac{a}{2b}, \label{eqnest1}\\ [1ex]
\sup_{t\geq 0}\|u(\cdot, t)\|_{L^2(\R^d)}+\|u\|_{L^2(0,\infty; H^1(\R^d))}\leq C(\|u_0\|_{L^2(\R^d)}).\label{eqnest2}
\end{eqnarray}

Furthermore, if $u_0\in H^s(\R^d)$ for $s>\frac{d}{2}+1$ and $\|u_0\|_{H^s}<\frac{a}{bK}$ (from theorem \ref{estVsmallID}), then for any given $T$, the solution $u\in L^\infty(0,T;L^1(\R^d)\cap H^s(\R^d))\cap L^2(0,T; H^{s+1}(\R^d))$ is unique and satisfies
\begin{eqnarray}
\sup_{0\leq t\leq T}\|u(\cdot, t)\|_{H^s(\R^d)} +\|u\|_{L^2(0,T; H^{s+1}(\R^d))}\leq C(\|u_0\|_{H^s(\R^d)}).\label{eqnest3}
\end{eqnarray}
\end{thm}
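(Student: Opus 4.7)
The strategy is to pass to the limit $\varepsilon\to 0$ in the intermediate solutions of Theorems \ref{thmexisV} and \ref{estVsmallID}, using the $\varepsilon$-independent bounds established there and the fact that $V_\varepsilon/(2b)$ is an approximation of the identity. When only $u_0\in L^1(\R^d)\cap L^\infty(\R^d)$ is given, I would first mollify to a sequence $u_0^n\in H^s\cap L^1$ with $\|u_0^n\|_{L^\infty}\le\|u_0\|_{L^\infty}<a/(2b)$ and $u_0^n\to u_0$ in $L^1\cap L^2$, apply Theorem \ref{thmexisV} to obtain intermediate solutions $u^{\varepsilon,n}$, and perform two successive limits (first $\varepsilon\to 0$ for fixed $n$, then $n\to\infty$, or alternatively a diagonal argument).

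For the inner limit, Theorem \ref{thmexisV} yields uniform-in-$\varepsilon$ bounds $0\le u^{\varepsilon,n}\le\|u_0^n\|_{L^\infty}<a/(2b)$, mass conservation, and $\|u^{\varepsilon,n}\|_{L^\infty(L^2)}+\|\nabla u^{\varepsilon,n}\|_{L^2(L^2)}\le C$. Combined with the PDE these yield $\|\partial_t u^{\varepsilon,n}\|_{L^2(0,T;H^{-1}_{\mathrm{loc}})}\le C$ (the nonlinear term being controlled by $\|u^{\varepsilon,n}\|_{L^\infty}\|V_\varepsilon\|_{L^1}\|\nabla u^{\varepsilon,n}\|_{L^2}$), so Aubin--Lions delivers a subsequence converging strongly in $L^2(0,T;L^2_{\mathrm{loc}})$, weakly in $L^2(0,T;H^1)$ and weakly-$*$ in $L^\infty(L^\infty\cap L^2)$ to some $u^n$. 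To identify the nonlinear term, write $\nabla V_\varepsilon*u^{\varepsilon,n}=V_\varepsilon*\nabla u^{\varepsilon,n}$ and decompose it as $V_\varepsilon*(\nabla u^{\varepsilon,n}-\nabla u^n)+(V_\varepsilon*\nabla u^n-2b\nabla u^n)$; the second summand tends to $0$ strongly in $L^2$ by the mollifier property, while the first, after pairing with $\phi\in C_c^\infty$, is bounded by $\|\nabla u^{\varepsilon,n}-\nabla u^n\|_{L^2}\|\tilde V_\varepsilon*\phi-2b\phi\|_{L^2}+2b|\int(\nabla u^{\varepsilon,n}-\nabla u^n)\phi|$ and vanishes by weak convergence of $\nabla u^{\varepsilon,n}$. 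Hence $\nabla V_\varepsilon*u^{\varepsilon,n}\rightharpoonup 2b\nabla u^n$ in $L^2_{\mathrm{loc}}$, and the strong--weak convergence of $u^{\varepsilon,n}(\nabla V_\varepsilon*u^{\varepsilon,n})$ gives $2bu^n\nabla u^n=b\nabla(u^n)^2$ in $\mathcal D'$, so $u^n$ is a weak solution of \eqref{eqn} for the datum $u_0^n$. A second Aubin--Lions passage in $n$ then produces the desired $u$, and the estimates \eqref{eqnest1}--\eqref{eqnest2} follow by lower semicontinuity.

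For the second part, with $u_0\in H^s$ and $\|u_0\|_{H^s}<a/(bK)$, Theorem \ref{estVsmallID} supplies $\varepsilon$-uniform bounds in $L^\infty(H^s)\cap L^2(H^{s+1})$; these transfer to $u$ by weak lower semicontinuity, giving \eqref{eqnest3}. Uniqueness then follows by a direct energy estimate on the difference $w=u_1-u_2$ of two such solutions, which satisfies $\partial_t w=a\Delta w-b\Delta((u_1+u_2)w)$. Testing with $w$ and integrating by parts yields
\begin{eqnarray*}
\frac{1}{2}\frac{d}{dt}\|w\|_{L^2}^2+\bigl(a-b(\|u_1\|_{L^\infty}+\|u_2\|_{L^\infty})\bigr)\|\nabla w\|_{L^2}^2\le C\|\nabla(u_1+u_2)\|_{L^\infty}\|w\|_{L^2}\|\nabla w\|_{L^2}.
\end{eqnarray*}
Since $\|u_i\|_{L^\infty}<a/(2b)$, the coefficient of $\|\nabla w\|_{L^2}^2$ on the left is strictly positive, so Young's inequality absorbs the right-hand side; combining the embedding $H^s\hookrightarrow W^{1,\infty}$ (which makes $\|\nabla(u_1+u_2)\|_{L^\infty}\in L^\infty(0,T)$) with Gr\"onwall's inequality forces $w\equiv 0$.

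The main obstacle is the rigorous identification of the weak limit of the nonlinear term $u^\varepsilon\,\nabla V_\varepsilon*u^\varepsilon$, which couples a mollification by $V_\varepsilon$ with only weak $L^2$ convergence of $\nabla u^{\varepsilon,n}$; once this is settled by the decomposition above, everything else is standard given the uniform estimates of Theorems \ref{thmexisV} and \ref{estVsmallID}.
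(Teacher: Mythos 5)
Your proposal is correct and follows essentially the same route as the paper: compactness from the uniform-in-$\varepsilon$ estimates of Theorems \ref{thmexisV} and \ref{estVsmallID}, Aubin--Lions with a local/diagonal argument and a uniform bound on $\partial_t u^\varepsilon$, identification of $u^\varepsilon V_\varepsilon*\nabla u^\varepsilon\rightharpoonup 2bu\nabla u$ by moving the mollifier onto the test function and combining strong local with weak convergence, transfer of the estimates by lower semicontinuity, and an $L^2$ energy estimate for uniqueness exploiting $\|u_i\|_{L^\infty}<\frac{a}{2b}$ and $H^s\hookrightarrow W^{1,\infty}$. The only substantive difference is your additional mollification $u_0^n$ of the initial data with a second passage to the limit in $n$, which honestly covers the case where $u_0$ is merely in $L^1\cap L^\infty$ (the paper's proof invokes Theorem \ref{thmexisV} directly, which formally requires $H^s$ data), while your uniqueness bookkeeping via the symmetric factorization $u_1^2-u_2^2=(u_1+u_2)w$ differs from the paper's splitting only cosmetically.
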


\begin{proof}
For any fixed time interval $[0,T]$, we know from Theorem \ref{thmexisV} that there exists a subsequence of $u^\varepsilon$ (without relabeling) such that
$$
u^\varepsilon\rightharpoonup u,\quad \mbox{ weakly in } L^2(0,T;H^1(\R^d)).
$$
Furthermore, due to the fact that $V_\varepsilon\rightarrow 2b\delta$ in the sense of distribution, we have that for any $\psi\in L^2(0,T;L^2(\R^d))$
\begin{eqnarray*}
&&\Big|\dint^T_0dt\dint_{\R^d}\psi(x,t) \dint_{\R^d}V_\varepsilon(x-y)\nabla u^\varepsilon(y,t)dxdy -2b\dint^T_0dt\dint_{\R^d}\psi(x,t) \nabla u(x,t)dx\Big|\\
\leq && \Big|\dint^T_0dt\dint_{\R^d} \Big(\dint_{\R^d}\psi(x,t)V_\varepsilon(x-y)dx-2b\psi(y,t)\Big) \nabla u^\varepsilon(y,t)dy\Big|\\
&&+2b\Big|\dint^T_0dt\dint_{\R^d}\psi(x,t) \nabla u^\varepsilon(x,t)dx-\dint^T_0dt\dint_{\R^d}\psi(x,t) \nabla u(x,t)dx\Big|
\rightarrow 0 \quad \mbox{ as }\varepsilon\rightarrow 0.
\end{eqnarray*}
Therefore,
$$
V_\varepsilon*\nabla u^\varepsilon\rightharpoonup 2b\nabla u,\quad \mbox{ weakly in } L^2(0,T;L^2(\R^d)).
$$

From the uniform estimates in $\varepsilon$ (see Theorem \ref{thmexisV}), we can deduce the estimate for the aggregation term by using Hardy-Littlewood-Sobolev inequality, i.e.
\begin{eqnarray}\label{estagg}
\|u^\varepsilon V_\varepsilon *\nabla u^\varepsilon\|_{L^2(0,\infty;L^2(\R^d))}\leq \|u^\varepsilon\|_{L^\infty(0,\infty;L^\infty(\R^d))}\|V_\varepsilon\|_{L^1(\R^d)}\|\nabla u^\varepsilon\|_{L^2(0,\infty;L^2(\R^d))}\leq C,
\end{eqnarray}
from which we obtain the uniform estimate for the time derivative
$$
\|\partial_t u^\varepsilon\|_{L^2(0,T;H^{-1}(\R^d))}=\|\nabla\cdot(a\nabla u^\varepsilon-2b u^\varepsilon V_\varepsilon *\nabla u^\varepsilon)\|_{L^2(0,T;H^{-1}(\R^d))}\leq C.
$$

For a sequence of balls $B_{R_k}\in \R^d$ with radius $R_k\rightarrow \infty$ ($k\rightarrow \infty$), there exists a subsequence that strongly converges in $L^2(0,T;L^2(B_{R_k}))$
due to the compact embedding $H^1(\R^d)\hookrightarrow\hookrightarrow L^2(\R^d)$ and Aubin-Lions lemma (for example in \cite{ChenJungel,Simon87}). 
After a standard diagonal argument, we obtain a subsequence of $u^\varepsilon$ (again without relabeling) such that for any bounded ball $B_R\subset \R^d$
$$
u^\varepsilon\rightarrow u\quad \mbox{ strongly in } L^2(0,T;L^2(B_R)).
$$
For the aggregation term, we have that
$$
u^\varepsilon V_\varepsilon*\nabla u^\varepsilon \rightharpoonup 2bu\nabla u \quad \mbox{ weakly in } L^1(0,T;L^1(B_R)).
$$
Together with the estimate in \eqref{estagg}, we get that the above weak convergence is in $L^2(0,T;L^2(B_R))$.
Thus, for any test function $\varphi\in C^\infty_0(\R^d)$, $\eta\in C^\infty([0,T])$, we can take the limit in the following weak formulation of the intermediate problem
$$
\dint^T_0 \langle \partial_t u^\varepsilon,\varphi\rangle_{\langle H^1,H^{-1}\rangle } \eta(t)dt =\dint^T_0 \dint_{\R^d} (a\nabla u^\varepsilon-2b u^\varepsilon V_\varepsilon\nabla u^\varepsilon)\cdot\nabla\varphi  dx \eta(t) dt
$$
and obtain that $u$ is a weak solution to the limiting problem.

The estimates in \eqref{eqnest1}, \eqref{eqnest2} and \eqref{eqnest3} follow directly from the uniform estimates in Theorem \ref{thmexisV} and \ref{estVsmallID}.

In the last step, we prove the uniqueness of the solution. We assume that $u_1$ and $u_2$ are two solutions with the same initial data $u_0$. 
The difference $u_1-u_2$ is then used as a test function in the weak solution formulation
\begin{eqnarray*}
&&\frac{1}{2}\dfrac{d}{dt}\int_{\R^d} |u_1-u_2|^2 dx \\
&\leq & \int_{\R^d} -(a-2bu_1) |\nabla (u_1-u_2)|^2 dx +\dint_{\R^d} (u_1-u_2)\nabla u_2\cdot \nabla (u_1-u_2)\\
&\leq & -\frac{\eta}{2}\int_{\R^d} |\nabla (u_1-u_2)|^2dx +\| \nabla u_2\|_{L^\infty(0,T;H^s(\R^d))} \dint_{\R^d} |u_1-u_2|^2 dx,
\end{eqnarray*}
from where it follows that
$$
\dsup_{0\leq t\leq T} \|(u_1-u_2)(\cdot,t)\|_{L^2(\R^d)}\leq e^{CT} \|u_0-u_0\|_{L^2(\R^d)}=0.
$$ \end{proof}

We remark that in section \ref{sec:num_res}, we analyze the condition $\|u_0\|_{L^\infty}<\frac{a}{2b}$ from a numerical point of view.
That means, we study the expression $a = 2b||u_0||_{L^\infty}\eta$ for $\eta \geq 0$, where $\eta>1$ identifies the diffusion
and $\eta <1$ the aggregation regime. In particular, the case $\eta=1$ is carefully evaluated.

\subsection{Estimate for $u^\varepsilon-u$}
To finish our investigations on the solvability of the intermediate and macroscopic problem,
we give an estimate for the difference of their solutions. 

\begin{lem}\label{lemuuep}
Let $u$ and $u^\varepsilon$ be the solutions of \eqref{eqn} and \eqref{probinter} with the same initial data $u_0$ and uniform estimates in $L^\infty(0,T;H^s(\R^d))\cap L^2(0,T;H^{s+1}(\R^2))$.
Let $V\in C^2_0(\R^d)$ equipped with compact support $B_1$, then the following estimate holds
\begin{eqnarray*}
\|u^\varepsilon-u\|_{L^\infty(0,T;L^2(\R^d))} +\|\nabla(u^\varepsilon-u)\|_{L^(0,T;L^2(\R^d))}\leq C(T)\varepsilon.
\end{eqnarray*}
\end{lem}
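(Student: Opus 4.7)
The plan is to set $w := u^\varepsilon - u$, subtract the two PDEs, and carry out an $L^2$ energy estimate in which the nonlinear difference is decomposed algebraically so that only one piece carries the mollification error. Using $\nabla V_\varepsilon * u^\varepsilon = V_\varepsilon * \nabla u^\varepsilon$, $w$ satisfies $\partial_t w - a\Delta w = -\nabla\cdot(u^\varepsilon V_\varepsilon*\nabla u^\varepsilon - 2b u\nabla u)$, and testing with $w$ gives
\begin{equation*}
\tfrac12\tfrac{d}{dt}\|w\|_{L^2}^2 + a\|\nabla w\|_{L^2}^2
= \int_{\R^d}\bigl(u^\varepsilon V_\varepsilon*\nabla u^\varepsilon - 2b u\nabla u\bigr)\cdot\nabla w\,dx.
\end{equation*}
I would rewrite the integrand on the right as
\begin{equation*}
u^\varepsilon\bigl(V_\varepsilon*\nabla u^\varepsilon - 2b\nabla u^\varepsilon\bigr) + 2b\,w\,\nabla u^\varepsilon + 2b\,u\,\nabla w,
\end{equation*}
which isolates a mollification error, a transport-type remainder in $w$, and a piece $2bu\nabla w$ that is absorbed into the diffusion by the subcritical bound $\|u\|_{L^\infty}<a/(2b)$ inherited from Theorem \ref{existWeak}.

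The central ingredient is a first-order mollifier estimate. After the change of variables $z=(x-y)/\varepsilon$, the fundamental theorem of calculus gives $f(x-\varepsilon z)-f(x)=-\varepsilon\int_0^1 z\cdot\nabla f(x-t\varepsilon z)\,dt$, and Minkowski's integral inequality combined with the compact support of $V$ in $B_1$ yields
\begin{equation*}
\|V_\varepsilon*f - 2bf\|_{L^2(\R^d)} \le \varepsilon\,\|\nabla f\|_{L^2(\R^d)}\int_{B_1}|V(z)|\,|z|\,dz.
\end{equation*}
Applied with $f=\nabla u^\varepsilon$, this bounds $\|V_\varepsilon*\nabla u^\varepsilon-2b\nabla u^\varepsilon\|_{L^2}$ by $C\varepsilon\|D^2 u^\varepsilon\|_{L^2}$, which is integrable on $[0,T]$ by the assumed $L^2(0,T;H^{s+1})$ control. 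A Cauchy--Schwarz--Young estimate with small parameter then bounds the mollification piece by $\tfrac{c_0}{4}\|\nabla w\|_{L^2}^2 + C\varepsilon^2\|u^\varepsilon\|_{L^\infty}^2\|D^2u^\varepsilon\|_{L^2}^2$ and the transport piece by $\tfrac{c_0}{4}\|\nabla w\|_{L^2}^2 + C\|\nabla u^\varepsilon\|_{L^\infty}^2\|w\|_{L^2}^2$, while $2b\int u|\nabla w|^2\,dx \le (a-c_0)\|\nabla w\|_{L^2}^2$ with $c_0 := a-2b\|u\|_{L^\infty}>0$.

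Combining these estimates and using $w(0)=0$, the differential inequality
\begin{equation*}
\tfrac{d}{dt}\|w\|_{L^2}^2 + \tfrac{c_0}{2}\|\nabla w\|_{L^2}^2
\le C\|\nabla u^\varepsilon\|_{L^\infty}^2\|w\|_{L^2}^2 + C\varepsilon^2\|u^\varepsilon\|_{L^\infty}^2\|D^2u^\varepsilon\|_{L^2}^2
\end{equation*}
holds. Gronwall's inequality together with the uniform $L^\infty(0,T;H^s)\cap L^2(0,T;H^{s+1})$ bounds (which via the embedding $H^s\hookrightarrow W^{1,\infty}$ control $\|u^\varepsilon\|_{L^\infty}$ and $\|\nabla u^\varepsilon\|_{L^\infty}$) yields $\|w\|_{L^\infty(0,T;L^2)}^2+\|\nabla w\|_{L^2(0,T;L^2)}^2\le C(T)\varepsilon^2$, which is the claim after taking a square root. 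The main obstacle is arranging the splitting so that the mollification error acts on $\nabla u^\varepsilon$ (so that the available $L^2(0,T;H^{s+1})$ estimate is exactly what is needed) and so that the $2bu\nabla w$ term is peeled off cleanly to be absorbed via the subcritical constraint; once this is set up, everything else is a routine absorption-plus-Gronwall argument.
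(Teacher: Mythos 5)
Your proposal is correct and follows the same overall architecture as the paper's proof — an $L^2$ energy estimate for $w=u^\varepsilon-u$, a three-term splitting of the nonlinear difference, a first-order Taylor/mollifier estimate producing the factor $\varepsilon$, absorption of one term into the diffusion via the subcritical bound, and Gronwall — but your splitting is genuinely different. The paper decomposes $u^\varepsilon V_\varepsilon*\nabla u^\varepsilon-2bu\nabla u=(u^\varepsilon-u)V_\varepsilon*\nabla u^\varepsilon+u\,V_\varepsilon*\nabla(u^\varepsilon-u)+u\,(V_\varepsilon*\nabla u-2b\nabla u)$, so the mollification error falls on $\nabla u$ of the limiting solution (bounded by $2b\varepsilon\|D^2u\|_{L^2}$), and the term absorbed into the diffusion is the nonlocal one $u\,V_\varepsilon*\nabla w$, controlled by Young's convolution inequality together with $\|u\|_{L^\infty}\|V_\varepsilon\|_{L^1}\le a-2b\eta$. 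You instead place the mollification error on $\nabla u^\varepsilon$ — so you need the uniform-in-$\varepsilon$ bound on $D^2u^\varepsilon$ in $L^2(0,T;L^2)$, which the lemma's hypothesis indeed supplies — and you absorb the purely local term $2b\int u|\nabla w|^2$, which is marginally simpler because no convolution inequality is required; your transport remainder $2bw\nabla u^\varepsilon$ plays the role of the paper's $(u^\varepsilon-u)V_\varepsilon*\nabla u^\varepsilon$ and is handled the same way via $\|\nabla u^\varepsilon\|_{L^\infty}\le C\|u^\varepsilon\|_{H^s}$. Both variants use exactly the regularity assumed in the statement, so they are interchangeable. One small refinement: to obtain a time-uniform coercivity constant $c_0>0$ you should invoke the quantitative bound $u\le \frac{a}{2b}-\eta$ (inherited from $u_0\le\frac{a}{2b}-\eta$ and the maximum principle, as in the construction of $u$), rather than only the strict pointwise inequality $\|u(\cdot,t)\|_{L^\infty}<\frac{a}{2b}$; this is precisely how the paper justifies $\|u\|_{L^\infty}\|V_\varepsilon\|_{L^1}\le a-2b\eta$, and the same bound gives your $c_0\ge 2b\eta>0$.
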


\begin{proof}
Taking the difference of the two equations \eqref{eqn} and \eqref{probinter}, we obtain
\begin{eqnarray*}
&&\partial_t(u^\varepsilon-u) -\nabla \cdot \Big(a\nabla (u^\varepsilon-u) -(u^\varepsilon-u)V_\varepsilon*\nabla u^\varepsilon \\
&& \hspace{2cm}- u(V_\varepsilon *\nabla(u^\varepsilon-u)) -u(V_\varepsilon *\nabla u -2b\nabla u)\Big) =0.
\end{eqnarray*}
Multiplying by $u^\varepsilon-u$ and integrating on $\R^d$ leads to
\begin{eqnarray*}
&&\frac{1}{2}\dfrac{d}{dt}\dint_{\R^d} |u^\varepsilon-u|^2 dx + a\int_{\R^d} |\nabla (u^\varepsilon-u)|^2 dx -\int_{\R^d} (u^\varepsilon -u) V_\varepsilon *\nabla u^\varepsilon \cdot \nabla (u^\varepsilon-u) dx\\
&&\hspace{5mm} -\dint_{\R^d} u(V_\varepsilon*\nabla (u^\varepsilon-u))\cdot\nabla (u^\varepsilon-u)dx -\dint_{\R^d} u (V_\varepsilon *\nabla u-2b\nabla u)\cdot \nabla (u^\varepsilon-u) =0,
\end{eqnarray*}
from where we obtain
\begin{eqnarray*}
&&\frac{1}{2}\dfrac{d}{dt}\dint_{\R^d} |u^\varepsilon-u|^2 dx + a\int_{\R^d} |\nabla (u^\varepsilon-u)|^2 dx\\
&\leq & \|u^\varepsilon-u\|_{L^2(\R^d)} \|V_\varepsilon\|_{L^1(\R^d)} \|\nabla u^\varepsilon\|_{L^\infty(\R^d)} \|\nabla (u^\varepsilon-u)\|_{L^2(\R^d)} \\[1ex]
& & +\|u\|_{L^\infty(\R^d)} \|V_\varepsilon\|_{L^1(\R^d)} \|\nabla (u^\varepsilon-u)\|_{L^2(\R^d)} \|\nabla (u^\varepsilon-u)\|_{L^2(\R^d)} \\[1ex]
& & +\|u\|_{L^\infty(\R^d)} \|V_\varepsilon*\nabla u- 2b \nabla u\|_{L^2(\R^d)} \|\nabla (u^\varepsilon-u)\|_{L^2(\R^d)}.
\end{eqnarray*}
Due to the fact that $\forall g\in L^2(\R^d)$, it holds
\begin{eqnarray*}
&&\Big|\dint_{\R^d}\dint_{\R^d} V_\varepsilon(x-y) (\nabla u(y)-\nabla u(x))g(x) dydx\Big|\\
&\leq & \varepsilon \Big|\int^1_0 \dint_{\R^d}\dint_{\R^d} |V_\varepsilon (z)| \cdot |D^2u(rz+y)| g(y+z) dydz \Big|\\[1ex]
&\leq & \varepsilon \|D^2 u\|_{L^2(\R^d)} \|V_\varepsilon\|_{L^1(\R^d)} \|g\|_{L^2(\R^d)},
\end{eqnarray*}
which means
$$
\|V_\varepsilon*\nabla u- 2b \nabla u\|_{L^2(\R^d)}\leq 2b\varepsilon  \|D^2 u\|_{L^2(\R^d)}.
$$

Since $\|u\|_{L^\infty(\R^d)} \|V_\varepsilon\|_{L^1(\R^d)}\leq (\frac{a}{2b}-\eta)\cdot 2b=a-2b\eta$ and after using Young's inequality, we end up  with
\begin{eqnarray*}
&&\frac{1}{2}\dfrac{d}{dt}\dint_{\R^d} |u^\varepsilon-u|^2 dx + b\eta\int_{\R^d} |\nabla (u^\varepsilon-u)|^2 dx\\
&\leq & C \dint_{\R^d} |u^\varepsilon-u|^2dx + C\varepsilon^2 \|D^2u(\cdot,t)\|^2_{L^2(\R^d)}.
\end{eqnarray*}
Hence, the desired estimates are obtained by Gronwall's inequality together with taking the same initial data $u(x,0)=u^\varepsilon(x,0)=u_0(x)$.\end{proof}

\section{Rigorous derivation of the mean-field limit}\label{sec:RigorousMeanField}
In this section, we assume that the solutions for the intermediate problem \eqref{probinter} and the limiting problem \eqref{eqn} exist uniquely and satisfy the necessary {\it a priori} estimates that are needed in deriving the mean-field limit. 
Then, starting from the stochastic particle system \eqref{eq:micro}, we rigorously derive the
diffusion-aggregation equation \eqref{eqn} by exploiting the intermediate particle system with smooth interaction potential \eqref{probinter}.
The unique existence and the corresponding estimates can be obtained, for example, by Theorems \ref{thmexisV}, \ref{estVsmallID} and \ref{existWeak}.

\subsection{Stochastic particle systems}
In the following we use $(B^i(t))_{1\leq i\leq N}$ as a set of independent standard Brownian motions for each particle $i.$
The discrete particle model reads
\begin{eqnarray}\label{particle}
&&d X^i_{\varepsilon,N}(t)= \sqrt{2a}\,dB^i(t) +\frac{1}{N}\sum_{j\neq i} \nabla V_\varepsilon(|X^i_{\varepsilon,N}(t)-X^j_{\varepsilon,N}(t)|)dt,
\end{eqnarray}
where $V_\varepsilon(r)=\varepsilon^{-d}V(r/\varepsilon)$ and $\dint_{\R^d}V(x)dx=2b$.
The corresponding initial data is given by
\begin{eqnarray}\label{iniparticle}
X^i_{\varepsilon,N}(0)=\xi^i,\quad \mbox{ where $\xi^i$ are $N$ i.i.d random variables with pdf $u_0(x)$.}
\end{eqnarray}

Since for fixed $\varepsilon$, the gradient $\nabla V_\varepsilon$ is bounded Lipschitz continuous, 
we can use the following result for the unique solvability of initial value problems for stochastic particle systems:
\begin{lem} For any fixed $\varepsilon$, 
the problem \eqref{particle}-\eqref{iniparticle} has a unique global solution $X^i_{\varepsilon, N}(t)$.
\end{lem}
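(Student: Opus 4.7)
\medskip

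The plan is to recast the $N$-particle system as a single It\^o SDE on $\R^{dN}$ and appeal to the classical strong existence and uniqueness theorem for SDEs with globally Lipschitz coefficients. Define $\mathbf{X}_{\varepsilon,N}(t) = (X^1_{\varepsilon,N}(t),\dots,X^N_{\varepsilon,N}(t)) \in \R^{dN}$, let $\mathbf{B}(t) = (B^1(t),\dots,B^N(t))$ be a standard Brownian motion on $\R^{dN}$, and define the drift $b_\varepsilon : \R^{dN}\to \R^{dN}$ componentwise by
\begin{eqnarray*}
b_\varepsilon^i(x_1,\dots,x_N) = \frac{1}{N}\sum_{j\neq i}\nabla V_\varepsilon(x_i - x_j), \qquad i=1,\dots,N.
\end{eqnarray*}
Then \eqref{particle}--\eqref{iniparticle} takes the condensed form $d\mathbf{X}_{\varepsilon,N} = \sqrt{2a}\, d\mathbf{B} + b_\varepsilon(\mathbf{X}_{\varepsilon,N})\, dt$ with $\mathbf{X}_{\varepsilon,N}(0) = (\xi^1,\dots,\xi^N)$, so the diffusion coefficient is a constant multiple of the identity and thus trivially Lipschitz and of linear growth.

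Next I would verify the regularity of $b_\varepsilon$. Since $V\in C^2_0(\R^d)$ has compact support, $V_\varepsilon(x)=\varepsilon^{-d}V(x/\varepsilon)$ also has compact support and belongs to $C^2_0(\R^d)$. Consequently $\nabla V_\varepsilon$ is bounded with bounded derivative, with bounds depending on $\varepsilon$, $d$ and $V$. Each component $b_\varepsilon^i$ is a finite sum of translates of $\nabla V_\varepsilon$, hence $b_\varepsilon$ is bounded on $\R^{dN}$ (so in particular of linear growth) and globally Lipschitz continuous with constant $L_\varepsilon$ controlled by $\|D^2V_\varepsilon\|_{L^\infty}$.

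The initial condition $(\xi^1,\dots,\xi^N)$ is $\mathcal{F}_0$-measurable and independent of $\mathbf{B}$; since the $\xi^i$ have density $u_0\in L^1\cap L^\infty$ (or at worst, by enlarging the filtration, are merely $\mathcal{F}_0$-measurable), the standard hypotheses for strong solvability are met. Applying the classical theorem for It\^o SDEs with globally Lipschitz, linearly growing coefficients (see e.g.\ the textbook result in Karatzas--Shreve or \cite{KloedenPlaten1992}) gives a pathwise unique, strong solution $\mathbf{X}_{\varepsilon,N}(t)$ defined for all $t\geq 0$, which yields the claimed unique global existence of $X^i_{\varepsilon,N}(t)$.

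There is essentially no obstacle here: the content of the lemma is that for fixed $\varepsilon$ the interaction kernel is smooth and compactly supported, so the non-linear mean-field difficulties that appear in later sections (when $\varepsilon\to 0$ and $N\to\infty$ simultaneously) are absent. The only point requiring minor care is to observe explicitly that the Lipschitz constant of $b_\varepsilon$ blows up as $\varepsilon\to 0$, which is why this lemma cannot by itself produce an $\varepsilon$-uniform result and why the $\varepsilon$-uniform analysis in Section \ref{sec:solva} is needed in the sequel.
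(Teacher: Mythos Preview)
Your proposal is correct and matches the paper's approach exactly: the paper simply observes that for fixed $\varepsilon$ the gradient $\nabla V_\varepsilon$ is bounded and Lipschitz continuous, and then states the lemma without further proof, appealing to the standard well-posedness theory for SDEs. Your write-up is a faithful expansion of precisely this argument.
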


We note that the trajectory of the intermediate problem \eqref{probinter} is
\begin{eqnarray}\label{intparticle}
&&d \bar X^i_{\varepsilon}(t)= \sqrt{2a}\,dB^i(t) +\int_{\R^d}\nabla V_\varepsilon(|\bar X^i_{\varepsilon}(t)-y|) u^\varepsilon(y,t) dy dt,
\end{eqnarray}
where $u^\varepsilon(x,t)$ is the probability density function of random variables $\bar X^i_{\varepsilon}(t)$,
and 
the trajectory of the limiting problem \eqref{eqn} is
\begin{eqnarray}
\label{SDEs}
d\hat X^{i}(t) &=& \sqrt{2a}\,dB^i(t) -2b\nabla u(\hat X^i(t),t)dt.
\end{eqnarray}

In order to compare the three problems \eqref{particle}, \eqref{intparticle} and \eqref{SDEs}, we take the same initial data \eqref{iniparticle}
for $X^i_{\varepsilon,N}(0), \bar X^i_\varepsilon(0)$ and $\hat X^i(0),$ i.e.
$$
X^i_0=\xi_i \quad \mbox{ i.i.d. random variables with pdf } u_0.
$$

With the help of the unique solvability of the problems investigated in section \ref{sec:solva}, we also have the existence and uniqueness of the initial value problems of the intermediate and the limiting trajectory. Namely,

\begin{lem}
If \eqref{probinter} has a unique solution $u^\varepsilon$ with $\nabla u^\varepsilon\in L^\infty(0,+\infty;W^{1,\infty}(\R^d))$, then the initial value problem \eqref{intparticle},\eqref{iniparticle} has a unique global solution $(\bar X^i_\varepsilon(t),u^\varepsilon(x,t))$.
\end{lem}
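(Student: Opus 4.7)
The plan is to view the SDE \eqref{intparticle} with $u^\varepsilon$ prescribed as the (hypothesis-given) unique solution of \eqref{probinter}, so that the drift
\[
b^\varepsilon(x,t) := \int_{\R^d} \nabla V_\varepsilon(x-y)\, u^\varepsilon(y,t)\, dy = (\nabla V_\varepsilon * u^\varepsilon(\cdot,t))(x)
\]
becomes an externally specified, time-dependent coefficient. The problem then reduces to two steps: (i) a classical linear SDE with additive noise whose drift is bounded and Lipschitz in $x$, uniformly in $t$, and (ii) a self-consistency check guaranteeing that the law of the resulting process actually admits the density $u^\varepsilon$.

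The first step is to verify that $b^\varepsilon$ is bounded and globally Lipschitz in $x$, uniformly in $t\ge 0$. Since $V_\varepsilon\in C^2_0(\R^d)$ has compact support, integrating by parts inside the convolution gives
\[
b^\varepsilon(x,t) = (V_\varepsilon * \nabla u^\varepsilon)(x,t), \qquad \nabla_x b^\varepsilon(x,t) = (V_\varepsilon * \nabla^2 u^\varepsilon)(x,t),
\]
and Young's inequality for convolutions yields
\[
\|b^\varepsilon(\cdot,t)\|_{L^\infty} + \|\nabla_x b^\varepsilon(\cdot,t)\|_{L^\infty} \leq \|V_\varepsilon\|_{L^1}\,\|\nabla u^\varepsilon(\cdot,t)\|_{W^{1,\infty}} \leq 2b\,\|\nabla u^\varepsilon\|_{L^\infty(0,\infty;W^{1,\infty})},
\]
which is finite by hypothesis. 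Measurability of $b^\varepsilon(x,t)$ in $t$ is inherited from that of $u^\varepsilon$ as a solution of the parabolic equation \eqref{probinter}. With a bounded, globally Lipschitz, measurable-in-$t$ drift and constant diffusion $\sqrt{2a}$, the classical strong existence/uniqueness theorem for It\^o SDEs (Picard iteration) then supplies, for each initial value $\xi^i$, a unique strong global solution $\bar X^i_\varepsilon(t)$ of \eqref{intparticle}.

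It remains to check that the law $\mu^\varepsilon_t$ of $\bar X^i_\varepsilon(t)$ coincides with $u^\varepsilon(\cdot,t)\,dx$. Applying It\^o's formula to $\varphi(\bar X^i_\varepsilon(t))$ for $\varphi\in C^2_c(\R^d)$ and taking expectations produces the weak formulation of the \emph{linear} Fokker--Planck equation
\[
\partial_t\mu^\varepsilon_t - a\Delta\mu^\varepsilon_t + \nabla\cdot\bigl(\mu^\varepsilon_t\, b^\varepsilon(x,t)\bigr) = 0, \qquad \mu^\varepsilon_0 = u_0\,dx.
\]
By construction, $u^\varepsilon(x,t)\,dx$ solves the same linear equation with the same initial datum, since the drift $b^\varepsilon$ is precisely the one frozen from $u^\varepsilon$. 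Uniqueness of distributional solutions of this linear parabolic equation with bounded Lipschitz drift---obtainable from a standard $L^2$ energy estimate, or by duality with the backward Kolmogorov equation---then forces $\mu^\varepsilon_t = u^\varepsilon(\cdot,t)\,dx$, completing the pair $(\bar X^i_\varepsilon,u^\varepsilon)$.

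The analytical content is essentially routine, and the main obstacle is precisely the self-consistency argument of Step~3: one has to ensure that $\mu^\varepsilon_t$ and $u^\varepsilon(\cdot,t)\,dx$ can be compared within the same class of weak solutions so that linear-PDE uniqueness applies. This is exactly what the regularity hypothesis $\nabla u^\varepsilon \in L^\infty(0,\infty;W^{1,\infty}(\R^d))$ delivers: it renders $b^\varepsilon$ smooth enough that both the Picard SDE theory of Step~2 and the Fokker--Planck uniqueness of Step~3 fall squarely within classical results.
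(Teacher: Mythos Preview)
Your argument is correct and follows essentially the same route as the paper: freeze the drift at the given $u^\varepsilon$, invoke the bounded-Lipschitz property via $\|V_\varepsilon\|_{L^1}\|\nabla u^\varepsilon\|_{W^{1,\infty}}$ to obtain a unique strong SDE solution, and then use It\^o's formula plus Fokker--Planck uniqueness to identify the law of $\bar X^i_\varepsilon$ with $u^\varepsilon$. The only cosmetic difference is that the paper phrases the final identification via the assumed uniqueness of the nonlinear problem \eqref{probinter} (both $u^\varepsilon$ and the law solve it), whereas you invoke uniqueness for the linear Fokker--Planck equation directly; these are equivalent here and your formulation is arguably the cleaner one.
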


\begin{proof} Let $v$ be the solution of \eqref{probepsilon} which satisfies the initial data $v(x,0)=u_0(x)$. By assumption, we have that $\nabla V_\varepsilon *v=V_\varepsilon*\nabla v$ is a bounded Lipschitz function. Therefore, the initial value problem
\begin{eqnarray*}
&& d\bar X_\varepsilon(t)=\sqrt{2a}\, dB(t) +(\nabla V_\varepsilon*v)(\bar X_\varepsilon(t))dt,\\[1ex]
&& \bar X(0)=\xi\quad \mbox{ given random variable with pdf $u_0(x)$}
\end{eqnarray*}
has a unique global solution $\bar X_\varepsilon(t)$. Let $u^\varepsilon$ be the probability density function. 
Then, we have from It$\hat o$'s formula
for any smooth test function $\varphi(x,t)$ that
\begin{eqnarray*}
\varphi(\bar X_\varepsilon(t),t)-\varphi(\xi,0) &=& \int^t_0 \Big[\partial_t\varphi(\bar X_\varepsilon(s),s) +(V_\varepsilon*\nabla v)(\bar X_\varepsilon(s),s)\cdot \nabla\varphi(\bar X_\varepsilon(s),s))\\
&& +a\Delta\varphi(\bar X_\varepsilon(s),s)\Big]ds +\sqrt{2a}\int^t_0\nabla\varphi(\bar X_\varepsilon(s),s)dB_s.
\end{eqnarray*}
By taking the expectation, we get
\begin{eqnarray*}
&&\int_{\R^d} u^\varepsilon(x,t)\varphi(x,t)dx -\int_{\R^d}u_0(x)\varphi(x,0) dx\\
&=&\int^t_0\int_{\R^d}u^\varepsilon(x,s) \Big(\partial_t\varphi(x,s)+\nabla V_\varepsilon*v(x,s)\cdot\nabla \varphi(x,s)+a\Delta\varphi(x,s)\Big)dxds
\end{eqnarray*}
which is exactly the weak formulation of \eqref{probepsilon} with $v=u^\varepsilon$. By the assumption that the solution to this problem exists uniquely, we obtain that the unique solution is $u^\varepsilon$, i.e. the probability density of $\bar X_\varepsilon$. In other words, the unique solution of 
\eqref{intparticle},\eqref{iniparticle} is given by $(\bar X_\varepsilon, u^\varepsilon)$.
\end{proof}

By the same method, it can be easily shown that the initial value problem of the limiting trajectory is also uniquely solvable.

\begin{lem}
If \eqref{eqn} has a unique solution $u$ with $\nabla u\in L^\infty(0,+\infty;W^{1,\infty}(\R^d))$, then the initial value problem \eqref{SDEs},\eqref{iniparticle} has a unique global solution $(\bar X^i(t),u(x,t))$.
\end{lem}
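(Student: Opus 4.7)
The plan is to follow exactly the two-step template used in the proof of the preceding lemma, substituting the limiting equation \eqref{eqn} and its effective drift $-2b\nabla u$ for the intermediate equation \eqref{probepsilon} and its drift $\nabla V_\varepsilon * v$.

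First I would invoke classical stochastic calculus to produce a unique pathwise solution of the SDE \eqref{SDEs}. The assumption $\nabla u\in L^\infty(0,\infty;W^{1,\infty}(\R^d))$ is exactly what is needed: the drift coefficient $(x,t)\mapsto -2b\nabla u(x,t)$ is bounded and Lipschitz continuous in $x$, uniformly in $t$, while the diffusion coefficient $\sqrt{2a}$ is a constant. Thus the standard Picard iteration scheme yields a unique strong solution $\hat X^i(t)$ defined for all $t\geq 0$ with initial condition $\hat X^i(0)=\xi_i$.

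Second, I would identify the law of $\hat X^i(t)$ with the given density $u(\cdot,t)$. Let $\tilde u(\cdot,t)$ denote the probability density function of $\hat X^i(t)$, which exists since the initial law admits the density $u_0$ and the SDE has a non-degenerate diffusion with Lipschitz drift. Applying It\^o's formula to $\varphi(\hat X^i(t),t)$ for an arbitrary $\varphi\in C^\infty_0(\R^d\times[0,T])$ and then taking expectations to eliminate the stochastic integral yields
\begin{eqnarray*}
&&\int_{\R^d}\tilde u(x,t)\varphi(x,t)\,dx -\int_{\R^d}u_0(x)\varphi(x,0)\,dx \\
&=& \int_0^t\int_{\R^d}\tilde u(x,s)\Big(\partial_s\varphi(x,s) -2b\nabla u(x,s)\cdot\nabla\varphi(x,s) +a\Delta\varphi(x,s)\Big)\,dx\,ds.
\end{eqnarray*}
This is the weak formulation of the linear Fokker--Planck equation $\partial_t\tilde u -a\Delta\tilde u +\nabla\cdot(-2b\nabla u\,\tilde u)=0$ with initial datum $u_0$. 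But $u$ itself satisfies the very same weak formulation, since \eqref{eqn} can be rewritten as $\partial_t u -a\Delta u +\nabla\cdot(-2b\nabla u\,u)=0$, and both $\tilde u$ and $u$ share the initial datum $u_0$. Treating $-2b\nabla u$ as a prescribed bounded Lipschitz coefficient, standard uniqueness of weak solutions to this linear advection--diffusion equation forces $\tilde u=u$, so $(\hat X^i(t),u(x,t))$ is the unique solution of \eqref{SDEs}, \eqref{iniparticle}.

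I expect no serious obstacle, as the argument is a direct transcription of the proof written for the intermediate problem. The only point requiring some care is the uniqueness step for the linear Fokker--Planck equation used to compare $\tilde u$ and $u$; this is immediate from the boundedness and Lipschitz regularity of $\nabla u$ guaranteed by the hypothesis, via a standard $L^2$ energy estimate on the difference $\tilde u-u$ together with Gr\"onwall's inequality.
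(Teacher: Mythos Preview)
Your proposal is correct and mirrors exactly what the paper does: it states that the lemma follows ``by the same method'' as the preceding Lemma~3.2 and gives no separate proof, so your direct transcription of that argument with $-2b\nabla u$ in place of $\nabla V_\varepsilon * v$ is precisely what is intended. Your identification step via uniqueness for the linear Fokker--Planck equation is in fact slightly cleaner than the paper's phrasing in the proof of Lemma~3.2.
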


\subsection{Convergence estimate for $N\rightarrow\infty$}
As a next step, we follow the ideas in \cite{JM98,Oel90,Phi07} to show the convergence in the large particle case. 
With the help of the existence theory and the estimates derived in section \ref{sec:solva}, 
we detect that some of the error estimates are different from those in the porous medium context, cf. \cite{JM98,Oel90,Phi07}. 
Therefore, for completeness, we give details of the proof. 

Let $V\in C^2_0(\R^d)$ and without loss of generality, let the compact support of $V$ be the unit ball. Thus, we have ${\rm supp} V_\varepsilon=B_\varepsilon(0)$.
The first lemma determines an estimate for the difference of the particle system and the intermediate problem.

\begin{lem}\label{lemXNbar}
 For any fixed $0<\delta\ll 1$ and time $t>0$, let $\varepsilon$ such that ${\frac{1}{\varepsilon^{2d+4}}}\leq \delta\ln N$, then 
$$
\E\Big(\sup_{0\leq s\leq t}\sup_{i=1,\cdots,N} \big|X^i_{\varepsilon,N}(s)-\bar X^i_\varepsilon(s)\big|^2\Big)\leq \frac{C(t)}{N^{1-C(t)\delta}},
$$
where $C(t)$ is a constant only depending on $t$, $\|V''\|_{L^\infty}$ and $\|u^\varepsilon\|_{L^\infty(0,\infty;H^s(\R^d))}$.
\end{lem}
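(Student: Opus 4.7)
The plan is a coupling argument in the spirit of Sznitman, Oelschl\"ager, Jourdain--M\'el\'eard, and Philipowski. Since $X^i_{\varepsilon,N}$ and $\bar X^i_\varepsilon$ share the same Brownian motion $B^i$ and the same initial datum $\xi^i$, subtracting \eqref{particle} from \eqref{intparticle} cancels the stochastic integrals and leaves the pathwise identity
\[
X^i_{\varepsilon,N}(t)-\bar X^i_\varepsilon(t)=\int_0^t\bigl[I^i(s)+J^i(s)\bigr]\,ds,
\]
where, after adding and subtracting $\frac{1}{N}\sum_{j\neq i}\nabla V_\varepsilon(\bar X^i_\varepsilon(s)-\bar X^j_\varepsilon(s))$, the \emph{coupling term} is
$I^i(s)=\frac{1}{N}\sum_{j\neq i}\bigl[\nabla V_\varepsilon(X^i_{\varepsilon,N}-X^j_{\varepsilon,N})-\nabla V_\varepsilon(\bar X^i_\varepsilon-\bar X^j_\varepsilon)\bigr]$
and the \emph{law-of-large-numbers fluctuation} is
$J^i(s)=\frac{1}{N}\sum_{j\neq i}\nabla V_\varepsilon(\bar X^i_\varepsilon-\bar X^j_\varepsilon)-\int_{\R^d}\nabla V_\varepsilon(\bar X^i_\varepsilon-y)u^\varepsilon(y,s)\,dy$.

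For $I^i(s)$ I would use the mean value theorem together with the explicit scaling $\|\nabla^2 V_\varepsilon\|_{L^\infty}\leq \|V''\|_{L^\infty}\varepsilon^{-(d+2)}$ to obtain
$|I^i(s)|\leq 2\|V''\|_{L^\infty}\varepsilon^{-(d+2)}\sup_k|X^k_{\varepsilon,N}(s)-\bar X^k_\varepsilon(s)|$.
For $J^i(s)$ the crucial point is that the trajectories $(\bar X^j_\varepsilon)_{j=1}^N$ are i.i.d.\ with density $u^\varepsilon(\cdot,s)$; hence, conditional on $\bar X^i_\varepsilon(s)$, the $(N-1)$ summands are i.i.d., mean-zero, and bounded in $\R^d$ by $\|\nabla V_\varepsilon\|_{L^\infty}\leq C\varepsilon^{-(d+1)}$. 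A componentwise Bernstein/Hoeffding inequality then gives
\[
\mathbb{P}\bigl(|J^i(s)|>r\,\big|\,\bar X^i_\varepsilon(s)\bigr)\leq 2d\,\exp\!\bigl(-c_d\,N r^2\,\varepsilon^{2d+2}\bigr).
\]
A union bound over $i=1,\ldots,N$ and over a time grid $\{t_k\}_{k=0}^{K}$ with $K$ polynomial in $N$, combined with the Lipschitz bound $\|\nabla^2 V_\varepsilon\|_{L^\infty}$ and the Brownian H\"older modulus of $\bar X^i_\varepsilon$ to control the interpolation $|J^i(s)-J^i(t_k)|$ for $s\in[t_k,t_{k+1}]$, would produce a bound of the form
\[
\E\Bigl[\sup_{s\leq t,\;i\leq N}|J^i(s)|^2\Bigr]\leq \frac{C(t)\,\ln N}{N\,\varepsilon^{2d+2}}.
\]

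To close the estimate, set $M(t):=\sup_{s\leq t}\sup_i|X^i_{\varepsilon,N}(s)-\bar X^i_\varepsilon(s)|^2$. Applying Cauchy--Schwarz to the pathwise identity yields
\[
M(t)\leq 8t\,\|V''\|_{L^\infty}^2\,\varepsilon^{-(2d+4)}\int_0^t M(s)\,ds+2t^2\sup_{s\leq t,\,i}|J^i(s)|^2,
\]
so that Gronwall gives $\E M(t)\leq 2t^2\,\E[\sup_{s,i}|J^i(s)|^2]\,\exp\!\bigl(Ct^2\varepsilon^{-(2d+4)}\bigr)$. Under the hypothesis $\varepsilon^{-(2d+4)}\leq \delta\ln N$ the exponential becomes $N^{C(t)\delta}$, while the prefactor is $\lesssim C(t)\ln N/N$; absorbing the logarithmic factor into $N^{\delta'}$ (for a slightly enlarged $\delta'$) produces precisely $C(t)/N^{\,1-C(t)\delta}$.

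The main obstacle will be the uniform-in-$(i,s)$ control of the fluctuation $J^i$: the pointwise second-moment bound $\E|J^i(s)|^2\lesssim \varepsilon^{-(2d+2)}/N$ is immediate by independence, but promoting it to a joint supremum over $N$ particles and over the entire interval $[0,t]$ requires simultaneously a sharp exponential concentration (to beat the $N$-fold union bound over the particle index, at the price of only a logarithmic loss) and a time-discretization whose mesh size must be calibrated against the large Lipschitz constant $\|\nabla^2 V_\varepsilon\|_{L^\infty}\sim\varepsilon^{-(d+2)}$. This is exactly the quantity whose square appears in the Gronwall exponent, which explains the scaling $\varepsilon^{-(2d+4)}\leq \delta\ln N$ imposed in the hypothesis.
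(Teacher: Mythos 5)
Your proposal is correct in outline and shares the paper's skeleton: the same synchronous coupling through common Brownian motions and common initial data, the same Lipschitz bound $\|\nabla^2 V_\varepsilon\|_{L^\infty}\le\|V''\|_{L^\infty}\varepsilon^{-(d+2)}$ for the coupling term (the paper merely splits it into two pieces $I_1,I_2$ instead of your single $I^i$), the same Gronwall step producing the exponential $e^{Ct^2\varepsilon^{-(2d+4)}}$, and the same use of the hypothesis $\varepsilon^{-(2d+4)}\le\delta\ln N$ to turn that exponential into $N^{C(t)\delta}$. Where you genuinely diverge is the law-of-large-numbers fluctuation term $J^i$ (the paper's $I_3$): the paper expands the square of the sum, notes that the cross terms $l\neq m$ have vanishing expectation because the $\bar X^l_\varepsilon$ are i.i.d.\ with density $u^\varepsilon$, and bounds the $N$ diagonal terms directly, obtaining $Ct^2/N$ with no logarithms and no time discretization; you instead invoke Hoeffding/Bernstein concentration conditionally on $\bar X^i_\varepsilon(s)$, a union bound over the $N$ particles and a polynomial-in-$N$ time grid, and a modulus-of-continuity interpolation, which costs an extra factor $\ln N\cdot\varepsilon^{-(2d+2)}$ that you then absorb into $N^{C(t)\delta}$ under the hypothesis. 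Both routes reach the stated bound. Your route is heavier — the time-grid/chaining step you flag as the main obstacle must actually be carried out, using the uniform bound on the drift $\nabla V_\varepsilon\ast u^\varepsilon$ and the time regularity of $u^\varepsilon$, though this is feasible since the Lipschitz constant $\varepsilon^{-(d+2)}$ is only polylogarithmic in $N$ under the hypothesis — and absorbing the logarithmic factors strictly introduces a mild $\delta$-dependence into the prefactor that the statement's $C(t)$ does not advertise; in exchange it buys a more careful treatment of the supremum over $i$ and over $s\in[0,t]$, which the paper handles rather informally by keeping $\sup_i$ inside the expectation while asserting that the cross terms vanish, and by appealing to monotonicity in $t$ for the time supremum.
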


\begin{proof} The fact $\|V_\varepsilon\|_{W^{2,\infty}}\leq \frac{1}{\varepsilon^{d+2}}\|V''\|_{\infty}$ is used within the proof several times.
Let
$$
S(t)=\sup_{i=1,\cdots,N} \big|X^i_{\varepsilon,N}(s)-\bar X^i_\varepsilon(s)\big|^2.
$$
By taking the difference of the two problems \eqref{particle} and \eqref{intparticle}, we obtain
\begin{eqnarray*}
&&\sup_{i=1,\cdots,N} \big|X^i_{\varepsilon,N}(t)-\bar X^i_\varepsilon(t)\big|^2\\
&\leq &\int^t_0\dfrac{t}{N^2}\sup_{i=1,\cdots,N} \Big|\sum^N_{l=1}\Big(\nabla V_\varepsilon(X^i_{\varepsilon,N}(s)-X^l_{\varepsilon,N}(s))-\nabla V_\varepsilon* u^\varepsilon(\bar X^i_\varepsilon(s),s)\Big)\Big|^2ds.
\end{eqnarray*}
Applying the expectation leads to 
\begin{eqnarray*}
\E(S(t))&\leq &\int^t_0\dfrac{t}{N^2}\E\Big(\sup_{i=1,\cdots,N} \Big|\sum^N_{l=1}\Big(\nabla V_\varepsilon(X^i_{\varepsilon,N}(s)-X^l_{\varepsilon,N}(s))-\nabla V_\varepsilon* u^\varepsilon(\bar X^i_\varepsilon(s),s)\Big)\Big|^2\Big)ds\\
&\leq &\dfrac{t}{N^2}\int^t_0 \Big\{\E\Big(\sup_{i=1,\cdots,N} \Big|\sum^N_{l=1}\Big(\nabla V_\varepsilon(X^i_{\varepsilon,N}(s)-X^l_{\varepsilon,N}(s))-\nabla V_\varepsilon(X^i_{\varepsilon,N}(s)-\bar X^l_{\varepsilon}(s))\Big)\Big|^2\Big)\\
&& \quad +\E\Big(\sup_{i=1,\cdots,N} \Big|\sum^N_{l=1}\Big(\nabla V_\varepsilon(X^i_{\varepsilon,N}(s)-\bar X^l_{\varepsilon}(s))-\nabla V_\varepsilon(\bar X^i_{\varepsilon}(s)-\bar X^l_{\varepsilon}(s))\Big)\Big|^2\Big)\\
&&\quad +\E\Big(\sup_{i=1,\cdots,N} \Big|\sum^N_{l=1}\Big(\nabla V_\varepsilon(\bar X^i_{\varepsilon}(s)-\bar X^l_\varepsilon(s))
-\nabla V_\varepsilon* u^\varepsilon(\bar X^i_\varepsilon(s),s)\Big)\Big|^2\Big)\Big\}ds\\
&=& I_1+I_2+I_3.
\end{eqnarray*}
Now, we derive the estimates for $I_1$, $I_2$ and $I_3$ separately.
\begin{eqnarray*}
|I_1|&\leq & \frac{t}{N^2}\int^t_0 \dfrac{\|V''\|_\infty^2}{\varepsilon^{2d+4}} \E \Big(\big(\sum^N_{l=1}\big|X^l_{\varepsilon,N}(s)-\bar X^l_\varepsilon(s)\big|\big)^2\Big)ds\\
&\leq & \frac{t\|V''\|_{\infty}^2}{\varepsilon^{2d+4}}\int^t_0\E \Big(\sup_{l=1,\cdots,N}\big|X^l_{\varepsilon,N}(s)-\bar X^l_\varepsilon(s)\big|^2\Big)ds\\
&\leq & \frac{t\|V''\|_{\infty}^2}{\varepsilon^{2d+4}}\int^t_0\E (S(s))ds.
\end{eqnarray*}
The second term can be handled similarly,
\begin{eqnarray*}
|I_2|&\leq & \frac{t}{N^2}\int^t_0 \dfrac{\|V''\|_\infty^2}{\varepsilon^{2d+4}} \E \Big(\sup_{i=1,\cdots,N}\big(N\big|X^i_{\varepsilon,N}(s)-\bar X^i_\varepsilon(s)\big|\big)^2\Big)ds\\
&\leq & \frac{t\|V''\|_{\infty}^2}{\varepsilon^{2d+4}}\int^t_0\E \Big(\sup_{i=1,\cdots,N}\big|X^i_{\varepsilon,N}(s)-\bar X^i_\varepsilon(s)\big|^2\Big)ds\\
&\leq & \frac{t\|V''\|_{\infty}^2}{\varepsilon^{2d+4}}\int^t_0\E (S(s))ds.
\end{eqnarray*}
The third term is estimated as follows
\begin{eqnarray*}
|I_3|&\leq & \frac{t}{N^2}\int^t_0 \E \Big[\sup_{i=1,\cdots,N}\sum^N_{l=1}\big(\nabla V_\varepsilon(\bar X^i_{\varepsilon}(s)-\bar X^l_\varepsilon(s))-\nabla V_\varepsilon*u^\varepsilon(\bar X^i_\varepsilon(s),s))\\
&&\hspace{2cm} \sum^N_{m=1}\big(\nabla V_\varepsilon(\bar X^i_{\varepsilon}(s)-\bar X^m_\varepsilon(s))-\nabla V_\varepsilon*u^\varepsilon(\bar X^i_\varepsilon(s),s))\Big]ds\\
&= &\frac{t}{N^2} \sum^N_{l=1}\sum^N_{m=1}\int^t_0\E \Big[\sup_{i=1,\cdots,N}\big(\nabla V_\varepsilon(\bar X^i_{\varepsilon}(s)-\bar X^l_\varepsilon(s))-\nabla V_\varepsilon*u^\varepsilon(\bar X^i_\varepsilon(s),s))\\
&&\hspace{2cm} \big(\nabla V_\varepsilon(\bar X^i_{\varepsilon}(s)-\bar X^m_\varepsilon(s))-\nabla V_\varepsilon*u^\varepsilon(\bar X^i_\varepsilon(s),s))\Big]ds,
\end{eqnarray*}
where for $l\neq m$ the expectation is zero. Hence,
$$
|I_3|\leq \frac{t}{N^2}\sum^N_{l=1}\int^t_0 \E\Big[\sup_{i=1,\cdots,N}\Big(\nabla V_\varepsilon(\bar X^i_{\varepsilon}(s)-\bar X^l_\varepsilon(s))-\nabla V_\varepsilon*u^\varepsilon(\bar X^i_\varepsilon(s),s)\Big)^2\Big]ds\leq \dfrac{Ct^2}{N},
$$
while exploiting the fact that $\|\nabla V_\varepsilon * u^\varepsilon\|_{L^\infty}\leq \| V_\varepsilon\|_{L^1}\|\nabla u^\varepsilon\|_{L^\infty} \leq C\|u^\varepsilon\|_{H^s}\leq C$.

Summarizing, we end up with
$$
\E(S(t))\leq \dfrac{2||V^{\prime \prime}||_\infty^2 t}{\varepsilon^{2d+4}}\int^t_0 \E(S(s))ds +\dfrac{Ct^2}{N},
$$
from which we obtain
\begin{align}
\E(S(t))\leq t\frac{C}{N}\left(\frac{\sqrt{\pi}}{2}\frac{\varepsilon^{d+2}}{||V^{\prime \prime}||_\infty}e^{t^2 ||V^{\prime \prime}||_\infty^2 \varepsilon^{-2d-4}}+t\right). \label{eq:Estimate}
\end{align}
Now, for any fixed $0<\delta\ll 1$, we can choose $\varepsilon$ so small that $e^{\frac{1}{\varepsilon^{2d+4}}}\leq N^\delta$. 
By taking the supremum in time on both sides, we have the conclusion.
\end{proof}
Note that the estimate \eqref{eq:Estimate} plays an important role for the numerical simulation of the
stochastic particle system in section \ref{sec:num_res} to determine a valid number of particles.

The next lemma intends to give an estimate on the difference of the intermediate and limiting problem.
\begin{lem} \label{lemXbarhat}Let $s>\frac{d}{2}+2$ and any fixed time $t>0$, then
$$
\E \big(\sup_{0\leq s\leq t}\big|\bar X_{\varepsilon}(s)-\hat  X(s)\big|\big)\leq C(t)\varepsilon.
$$
where $C(t)$ is a constant only depending on $t$, $\|V''\|_{L^\infty}$ and $\|u^\varepsilon\|_{L^\infty(0,\infty;H^s(\R^d))}$,
\end{lem}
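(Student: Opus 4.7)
Since \eqref{intparticle} and \eqref{SDEs} share the same Brownian motion $B^i$ and initial datum $\xi_i$, subtracting the two SDEs eliminates the martingale parts and yields the pathwise identity
\begin{align*}
\bar X_\varepsilon(t) - \hat X(t) = \int_0^t \bigl[F_\varepsilon(\bar X_\varepsilon(s),s) - F(\hat X(s),s)\bigr]\,ds,
\end{align*}
where $F_\varepsilon := \nabla V_\varepsilon * u^\varepsilon$ and $F := 2b\nabla u$ (the formal $\varepsilon\to 0$ limit of $F_\varepsilon$). The plan is then the standard splitting: adding and subtracting $F_\varepsilon(\hat X(s),s)$ separates the integrand into a Lipschitz part $F_\varepsilon(\bar X_\varepsilon,s)-F_\varepsilon(\hat X,s)$ and a consistency part $F_\varepsilon(\hat X,s)-F(\hat X,s)$.

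The Lipschitz part is controlled uniformly in $\varepsilon$ by exploiting the $H^s$ regularity of $u^\varepsilon$: one integration by parts inside the convolution gives $\partial_i F_\varepsilon^j = V_\varepsilon * \partial_i\partial_j u^\varepsilon$, so Young's inequality together with the Sobolev embedding $H^s\hookrightarrow W^{2,\infty}$ (valid since $s>d/2+2$) yields
\begin{align*}
\|\nabla F_\varepsilon\|_{L^\infty} \leq \|V_\varepsilon\|_{L^1}\|D^2 u^\varepsilon\|_{L^\infty} \leq 2bC\|u^\varepsilon\|_{H^s},
\end{align*}
which is bounded independently of $\varepsilon$ by Theorem~\ref{estVsmallID}. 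This gives $|F_\varepsilon(\bar X_\varepsilon,s)-F_\varepsilon(\hat X,s)|\leq C|\bar X_\varepsilon(s)-\hat X(s)|$. For the consistency part, the decisive observation is that $\hat X(s)$ has density $u(\cdot,s)$, which turns the expectation into a density-weighted integral and enables a Cauchy--Schwarz step:
\begin{align*}
\E|F_\varepsilon(\hat X,s)-F(\hat X,s)| = \int_{\R^d}|F_\varepsilon-F|(x,s)\,u(x,s)\,dx \leq \|F_\varepsilon(\cdot,s)-F(\cdot,s)\|_{L^2}\,\|u(\cdot,s)\|_{L^2}.
\end{align*}
Decomposing $F_\varepsilon-F = V_\varepsilon * \nabla(u^\varepsilon-u) + (V_\varepsilon * \nabla u - 2b\nabla u)$, Young's inequality combined with Lemma~\ref{lemuuep} bounds the first summand by $C\varepsilon$ in $L^2_tL^2_x$, while the Taylor expansion trick used in the proof of Lemma~\ref{lemuuep} bounds the second by $2b\varepsilon\|D^2u\|_{L^2}$.

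Finally, since pathwise $|\bar X_\varepsilon(s)-\hat X(s)|\leq \int_0^t|F_\varepsilon(\bar X_\varepsilon(\tau),\tau)-F(\hat X(\tau),\tau)|\,d\tau$ for every $s\leq t$, the supremum passes inside the expectation without cost. Assembling the two previous bounds yields
\begin{align*}
\E\sup_{0\leq s\leq t}|\bar X_\varepsilon(s)-\hat X(s)| \leq C\int_0^t \E\sup_{0\leq s'\leq \tau}|\bar X_\varepsilon(s')-\hat X(s')|\,d\tau + C(t)\varepsilon,
\end{align*}
and Gronwall's inequality closes the argument. The principal difficulty is obtaining a genuine $O(\varepsilon)$ rate for the consistency error rather than the suboptimal $O(\varepsilon^\theta)$, $\theta<1$, that would come from interpolating Lemma~\ref{lemuuep} against the uniform $H^s$ bound; the density-weighted integral above is precisely the device that keeps the $L^2$-level information from Lemma~\ref{lemuuep} sufficient without losing any power of $\varepsilon$.
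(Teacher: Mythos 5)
Your proposal is correct and follows essentially the same route as the paper: the same pathwise subtraction, the same three-way splitting (your Lipschitz part is the paper's $J_3$, and your two consistency pieces are exactly the paper's $J_1$ and $J_2$), the same use of the density of $\hat X$ to convert the expectation into a density-weighted integral, the same appeal to Lemma \ref{lemuuep} and the Taylor-expansion bound $\|V_\varepsilon*\nabla u-2b\nabla u\|_{L^2}\leq 2b\varepsilon\|D^2u\|_{L^2}$, and Gronwall to close. If anything, your explicit remark that the pathwise integral bound lets the supremum pass inside the expectation is slightly more careful than the paper's terse "taking the supremum in time on both sides."
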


\begin{proof}
Taking the difference between the intermediate \eqref{intparticle} and the limiting problem \eqref{SDEs} and considering
$$
\J(t)=\big|\bar X_{\varepsilon}(t)-\hat X(t)\big|,
$$
allows for the following representation:
\begin{eqnarray*}
\E(\J(t))&\leq & \dint^t_0\E\Big(\Big|2b\nabla u (\hat X(s),s) -\nabla V_\varepsilon * u^\varepsilon(\bar X_\varepsilon(s),s)\Big|\Big) ds\\
&\leq & \dint^t_0\E\Big(\Big|2b\nabla u (\hat X(s),s) -V_\varepsilon * \nabla u(\hat X(s),s)\Big|\Big) ds\\
&& +\dint^t_0\E\Big(\Big|V_\varepsilon * \nabla u(\hat X(s),s) -V_\varepsilon * \nabla u^\varepsilon(\hat X(s),s)\Big|\Big) ds\\
& & +\dint^t_0\E\Big(\Big|V_\varepsilon * \nabla u^\varepsilon(\hat X(s),s) - V_\varepsilon * \nabla u^\varepsilon(\bar X_\varepsilon(s),s)\Big|\Big) ds\\
&=& J_1+J_2+J_3.
\end{eqnarray*}
The estimate for $J_1$ is
\begin{eqnarray*}
J_1 &=&\dint^t_0\dint_{\R^d}\Big|\dint_{\R^d}V_\varepsilon(x-y)(\nabla u(x,s)-\nabla u(y,s)) dy \Big| u(x,s)dx ds \\
&=&\dint^t_0\dint_{\R^d}\Big|\int^1_0dr\dint_{\R^d}V_\varepsilon(x-y)\partial_r\nabla u(rx+(1-r)y,s)dy dr\Big| u(x,s)dx ds \\
&\leq & \varepsilon \dint^t_0\dint^1_0\dint_{\R^d}\dint_{\R^d}|V_\varepsilon(x-y)|\cdot|D^2 u(rx+(1-r)y,s) | u(x,s)dy dx drds \\
&= &\varepsilon \dint^t_0\dint^1_0\dint_{\R^d}\dint_{\R^d}|V_\varepsilon(z)|\cdot|D^2 u(rz+y,s) | \cdot |u(y+z,s)|dy dz drds \\
&\leq & 2b\varepsilon \dint^t_0 \|D^2u(\cdot,s)\|_{L^2(\R^d)}^2 \|u(\cdot,s)\|_{L^2(\R^d)}^2 ds\\[1ex]
&\leq & 2b\varepsilon \|u\|_{L^\infty(0,t;L^2(\R^d))}^2 \|D^2u\|_{L^2(0,t;L^2(\R^d))}^2.
\end{eqnarray*}
The expression $J_2$ can be estimated with the help of Lemma \ref{lemuuep}:
\begin{eqnarray*}
J_2 &=&\dint^t_0\dint_{\R^d}\Big|\dint_{\R^d}V_\varepsilon(x-y)(\nabla u(y,s)-\nabla u^\varepsilon(y,s)) dy \Big| u(x,s)dx ds \\
&\leq &\|V_\varepsilon\|_{L^1(\R^d)}\dint^t_0 \|(\nabla u-\nabla u^\varepsilon)(\cdot,s)\|_{L^2(\R^d)} \|u(\cdot,s)\|_{L^2(\R^d)}ds
\\
&\leq & 2b\|\nabla u-\nabla u^\varepsilon\|_{L^2(0,t;L^2(\R^d))} \|u\|_{L^2(0,t;L^2(\R^d))}\\[1ex]
&\leq & C\varepsilon.
\end{eqnarray*}
Finally, the estimate for $J_3$ is
\begin{eqnarray*}
J_3 &\leq & \|V_\varepsilon*D^2 u^\varepsilon\|_{L^\infty}\dint^t_0 \E (\J(s))ds \leq C \dint^t_0 \E (\J(s))ds.
\end{eqnarray*}
Then, by Gronwall's inequality, we get
$$
\E(\J_t)\leq C(t) \varepsilon.
$$
The conclusion is obtained by taking the supremum in time on both sides.
\end{proof}

Collecting the results from Lemma \ref{lemXNbar} and \ref{lemXbarhat} combined with the existence result in section \ref{sec:solva},
we are able to state the main theorem of this section on the mean-field limit.
\begin{thm} Assume $u_0\in L^1(\R^d)\cap H^s(\R^d)$ for $s>\frac{d}{2}+2$ and $\|u_0\|_{H^s}<\frac{a}{bK}$, then for ${\frac{1}{\varepsilon^{2d+4}}}\leq \delta\ln N$ it holds that
\begin{eqnarray*}
\E\Big(\sup_{0\leq s\leq t}\sup_{i=1,\cdots,N} \big|X^i_{\varepsilon,N}(s)-\hat X^i(s)\big|^2\Big)\leq C(t)\varepsilon^2 ,
\end{eqnarray*}
where $C(t)$ is a constant only depending on $t$, $\|V''\|_{L^\infty}$ and $\|u_0\|_{H^s(\R^d)}$,
\end{thm}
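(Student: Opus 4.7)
The plan is to derive the estimate by a triangle-inequality coupling argument exploiting the two comparisons already established. Writing
$$
|X^i_{\varepsilon,N}(s) - \hat X^i(s)|^2 \leq 2\,|X^i_{\varepsilon,N}(s) - \bar X^i_\varepsilon(s)|^2 + 2\,|\bar X^i_\varepsilon(s) - \hat X^i(s)|^2
$$
and taking $\E\,\sup_{s\leq t}\sup_{i}$ on both sides reduces the task to controlling each piece by $C(t)\varepsilon^2$. Lemma~\ref{lemXNbar} immediately gives $C(t)N^{-(1-C(t)\delta)}$ for the particle-to-intermediate piece. Under the hypothesis $\varepsilon^{-(2d+4)}\leq \delta\log N$, we have $\log N\geq \varepsilon^{-(2d+4)}/\delta$, so after choosing $\delta$ small this bound decays much faster than any power of $\varepsilon$ and is therefore dominated by $\varepsilon^2$.

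For the intermediate-to-limit piece, I would exploit the fact that $\bar X^i_\varepsilon$ and $\hat X^i$ are driven by the same initial datum $\xi^i$ and the same Brownian motion $B^i$, so the stochastic integrals cancel pathwise. Subtracting the two SDEs yields
$$
|\bar X^i_\varepsilon(t) - \hat X^i(t)| \leq \int_0^t \bigl|(V_\varepsilon*\nabla u^\varepsilon)(\bar X^i_\varepsilon(s),s) - 2b\,\nabla u(\hat X^i(s),s)\bigr|\,ds.
$$
Inserting $\pm(V_\varepsilon*\nabla u^\varepsilon)(\hat X^i(s),s)$ inside the integrand splits it into a Lipschitz contribution bounded by $\|V_\varepsilon*D^2u^\varepsilon\|_{L^\infty_{t,x}}\,|\bar X^i_\varepsilon-\hat X^i|$ (with Lipschitz constant uniformly finite thanks to the $L^\infty(0,T;H^s)$ bound of Theorem~\ref{estVsmallID}) and a residual contribution bounded by the purely spatial quantity $\|V_\varepsilon*\nabla u^\varepsilon-2b\,\nabla u\|_{L^\infty(0,t;L^\infty(\R^d))}$, which does not depend on $i$. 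Taking $\sup_i$ on both sides and applying Gronwall's inequality then produces the deterministic estimate
$$
\sup_{s\leq t}\sup_i |\bar X^i_\varepsilon(s)-\hat X^i(s)| \leq C(t)\,\|V_\varepsilon*\nabla u^\varepsilon-2b\,\nabla u\|_{L^\infty(0,t;L^\infty(\R^d))},
$$
after which squaring and taking expectation is automatic; the $\sup_i$ comes for free because the right-hand side is non-random.

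The heart of the proof is therefore to establish $\|V_\varepsilon*\nabla u^\varepsilon-2b\,\nabla u\|_{L^\infty(0,t;L^\infty(\R^d))}\leq C(t)\varepsilon$. Decomposing this as $V_\varepsilon*\nabla(u^\varepsilon-u)$ plus $V_\varepsilon*\nabla u-2b\,\nabla u$, the second summand is treated by a Taylor expansion on $\nabla u$, giving the pointwise bound $2b\varepsilon\|D^2u\|_{L^\infty}$; this is finite because $s>d/2+2$ and Theorem~\ref{existWeak} supplies a uniform $L^\infty(0,t;H^s)$ bound together with the Sobolev embedding $H^s\hookrightarrow W^{2,\infty}$. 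The first summand is controlled by $2b\,\|\nabla(u^\varepsilon-u)\|_{L^\infty_{t,x}}$ via Young's convolution inequality, and this is where the main obstacle lies: Lemma~\ref{lemuuep} in its current form only delivers a space-$L^2$ estimate for $\nabla(u^\varepsilon-u)$, which is not sharp enough to yield an $L^\infty$ bound of order $\varepsilon$ by interpolation alone. To close the gap, I would upgrade the proof of Lemma~\ref{lemuuep} to an $L^\infty(0,t;H^s)$ estimate by redoing its energy computation after applying $D^\alpha$ to the difference equation for each $|\alpha|\leq s$, using the commutator estimates \eqref{ineq1}--\eqref{ineq2} and absorbing the resulting nonlinear terms with the uniform $H^s$ bounds already available from Theorems~\ref{estVsmallID} and \ref{existWeak}. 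The desired $L^\infty$ bound on $\nabla(u^\varepsilon-u)$ then follows from Sobolev embedding, completing the argument.
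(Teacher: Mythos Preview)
Your overall strategy---triangle inequality via the intermediate trajectory $\bar X^i_\varepsilon$, with Lemma~\ref{lemXNbar} handling the particle-to-intermediate piece---matches the paper. The intermediate-to-limit comparison is where you diverge. The paper's Lemma~\ref{lemXbarhat} takes expectation at the outset and exploits $\E[f(\hat X(s),s)]=\int_{\R^d} f(x,s)\,u(x,s)\,dx$ to rewrite the residual terms $J_1,J_2$ as spatial integrals against the density $u$; Cauchy--Schwarz then closes the estimate using only the $L^2$-level bound of Lemma~\ref{lemuuep}, with no need for an $L^\infty$ upgrade. Your pathwise Gronwall argument instead requires $\|V_\varepsilon*\nabla u^\varepsilon-2b\nabla u\|_{L^\infty(0,t;L^\infty(\R^d))}\leq C\varepsilon$, hence the proposed higher-order extension of Lemma~\ref{lemuuep}. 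That extension is feasible, but should be done at level $H^{s-1}$ rather than $H^s$: the source term $D^\alpha[u(V_\varepsilon*\nabla u-2b\nabla u)]$ costs one derivative beyond $|\alpha|+1$, so $|\alpha|=s$ would demand $\|u\|_{L^2(0,T;H^{s+2})}$, which is unavailable, whereas $|\alpha|\leq s-1$ uses only $\|u\|_{L^2(0,T;H^{s+1})}$ and still yields $\|\nabla(u^\varepsilon-u)\|_{L^\infty}\leq C\varepsilon$ via Sobolev embedding since $s-1>d/2+1$. The trade-off is that the paper avoids reproving the difference lemma in higher norms, but its Lemma~\ref{lemXbarhat} as stated delivers only a first-moment, single-particle bound $\E\sup_s|\bar X_\varepsilon-\hat X|\leq C\varepsilon$, leaving the passage to $\E\sup_s\sup_i|\cdot|^2\leq C\varepsilon^2$ implicit; your pathwise bound is deterministic and uniform in $i$, so that passage becomes automatic.
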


\begin{rem} 
Using the results on the convergence of trajectories, we can also get the corresponding propagation of chaos results (which means that the empirical measure $\frac{1}{N}\sum^N_{i=1}\delta_{X^i_{\varepsilon,N}(t)}$ converges weakly to the measure with probability density $u(x,t)$), see for example Theorem 3.1 in \cite{Phi07}.
\end{rem}

The next sections are devoted to the numerical investigations of the diffusion-aggregation problem \eqref{eqn}
and comparisons to the stochastic particle system.

\section{Numerical discretization schemes}
\label{sec:disc}
Starting from the stochastic particle system \eqref{particle}-\eqref{iniparticle}, 
we introduce a straightforward discretization and explain the numerical implementation.
We also develop a suitable, positive preserving discretization scheme for the diffusion-aggregation problem \eqref{eqn}.
Numerical results are then discussed in section \ref{sec:num_res}.

\subsection{Discretization of the stochastic particle system} 
\label{subsec:ParticleSystem}
To approximate the stochastic particle model \eqref{particle}-\eqref{iniparticle},
we use the Euler-Maruyama method, see for example \cite{KloedenPlaten1992}. Let $\{0=t_0<t_1<\cdots<t_S=T\}$ be a time discretization of $[0,T]$ 
for some $T\geq 0$. Furthermore, let $\Delta t_n = t_{n+1}-t_n$ be the corresponding step-sizes and $\Delta B^i_n = B^i(t_{n+1})-B^i(t_{n})$ the Brownian increments for $n= 0,\dots,S-1$. We denote by $Y^i_n$ the approximated solution of the system \eqref{particle}-\eqref{iniparticle}
at time $t_n$ satisfying
\begin{align}
Y^i_{n+1} &= Y^i_n + \Delta B^i_n \sqrt{2a}+\Delta t_n \frac{1}{N}\sum_{j\neq i} \nabla V_\varepsilon(|Y^i_n-Y^j_n|),\label{eq:Euler1}\\
Y^i_0 &= \xi_i,\label{eq:Euler2}
\end{align} 
for every $i=1,\dots,N$ and $n=0,\dots,S-1$. The sequence $(Y_n)_n$ of random variables is called Euler-Maruyama approximation for the initial value problem 
\eqref{particle}-\eqref{iniparticle}.

We aim to analyze the behavior of the particle system when the initial values $\xi_i$ are i.i.d.
and the latter distribution is given by a density which is a superposition of Barenblatt profiles.
We choose the following normalized Barenblatt profile as a basic component:
\begin{align}
\tilde{B}_{T,x_0}(x) := \frac{\sqrt{3}}{8}\left(T^{\frac{2}{3}}-\frac{(x-x_0)^2}{12}\right)_+ ,
\end{align}
which satisfies $||\tilde{B}_{T,x_0}||_{L^1(\R)} = 1$, $\operatorname{supp}(\tilde{B}_{T,x_0}) = [x_0-\sqrt{12T^{\frac{2}{3}}},x_0+\sqrt{12T^{\frac{2}{3}}}]$ and $||\tilde{B}_{T,x_0}||_\infty = \frac{\sqrt{3}}{8}T^{\frac{2}{3}}$.
We set \[u_0(x) = \sum_{l=1}^m \alpha_l \beta_l \tilde{B}_{T_l,x_{0,l}}(\beta_l x)\] as a weighted linear combination of rescaled and normalized Barenblatt profiles with $\alpha_l \geq 0$, $\sum_{l=1}^m \alpha_l = 1$, $\beta_l>0$ for $l=1,\dots,m$, $m\in \mathbb{N}$. Then, $u_0$ is again a probability density function and due to the composition method, see for example \cite{AsmussenGlynn2007}, we only need a simulation method for the random variables with density $\beta_l \tilde{B}_{T_l,x_{0,l}}(\beta_l x)$. 
To generate these random variables, we use the inverse transformation method. In detail, if $U \sim \mathcal{U}([0,1])$ is a uniformly distributed random variable and \[F(z):= \int_{(-\infty,z]}\tilde{B}_{T,0}(x)dx\] the cumulative distribution function (cdf), then $\xi = F^{-1}(U)$ has the cdf $F$. 
Note that $F^{-1}$ is a right-continuous pseudo-inverse 
\[F^{-1}(v) = \inf\{z \colon F(z) \geq v\}\]
for every $v \in [0,1]$.
A computation shows that the cdf for $\tilde{B}_{T,0}$ is given by
\begin{align*}
F(z) = 
\begin{cases}
0 &\text{ for } z < -\sqrt{12T^{\frac{2}{3}}},\\
\frac{\sqrt{3}}{8}\left(\frac{z}{T^{\frac{1}{3}}}-\frac{z^3}{36T}\right)+\frac{1}{2} &\text{ for } z \in [-\sqrt{12T^{\frac{2}{3}}},\sqrt{12T^{\frac{2}{3}}}),\\
1 &\text{ for } z \geq \sqrt{12T^{\frac{2}{3}}}.
\end{cases}
\end{align*}
To determine the pseudo-inverse of $F$, we let $v \in (0,1)$ and consider the cubic equation 
\begin{align*}
\frac{\sqrt{3}}{8}\left(\frac{z}{T^{\frac{1}{3}}}-\frac{z^3}{36T}\right)+\frac{1}{2} = v \Leftrightarrow& \,z^3+pz+q= 0
\end{align*}
with 
\begin{align*}
p = -36T^{\frac{2}{3}} \text{ and } q = \left(v-\frac{1}{2}\right)96\sqrt{3}T.
\end{align*}
The discriminant of the equation is
\begin{align*}
\Delta &= \left(\frac{q}{2}\right)^2+\left(\frac{p}{3}\right)^3 = T^2 12^3\left(4\left(v-\frac{1}{2}\right)^2-1\right)<0
\end{align*}
since $(v-\frac{1}{2})^2 \in [0,\frac{1}{4})$ and we get three real-valued solutions using Cardano's method. 
From the shape of the function we know that we need the second root 
\[z = -\sqrt{-\frac{4}{3}p}\cos\left(\frac{1}{3} \arccos\left(-\frac{q}{2}\sqrt{-\frac{27}{p^3}}\right)+\frac{\pi}{3}\right).\]
Consequently, we have
\begin{align*}
F^{-1}(v) = 
\begin{cases}
-\infty &\text{ for } v = 0,\\
-\sqrt{-\frac{4}{3}p}\cos\left(\frac{1}{3} \arccos\left(-\frac{q}{2}\sqrt{-\frac{27}{p^3}}\right)+\frac{\pi}{3}\right) &\text{ for } v \in (0,1),\\
\sqrt{12 T^{\frac{2}{3}}} &\text{ for } v =1.
\end{cases}
\end{align*}

By defining $F_l^{-1}(v) = \frac{F^{-1}(v)}{\beta_l}+x_{0,l}$, we obtain $F_l^{-1}(U) \sim \tilde{B}_{T_l,x_{0,l}}(\beta_l x)$ and the complete simulation algorithm for the initial random variables reads:
\begin{enumerate}
\item Generate a random number $I\sim \sum_{l=1}^m \alpha_l \delta_l$
\item Generate a random number $U \sim \mathcal{U}([0,1])$ and use $\xi_i = F_I^{-1}(U)\sim u_0$
\end{enumerate}


\subsection{Discretization of the diffusion-aggregation equation} \label{subsec:NumMac}
Next, we derive a numerical scheme for the macroscopic equation \eqref{eqn} 
restricted to one space dimension here. 
The latter equation is a positivity-preserving conservation law and from Theorem \ref{existWeak} we know that there exists a global solution if $\eta:=\frac{a}{2b ||u_0||_{L^\infty}} >1$. 
We rewrite the equation as follows
\begin{eqnarray*}
\partial_t u+2 b(u_x u)_x = a u_{xx}.
\end{eqnarray*}
From the assumptions $a,b\geq 0$, we can identify
the classical heat equation $\partial_tu = a u_{xx}$  
and an advection equation $\partial_t u +2b(u_x u)_x = 0,$
where the flux also depends on the derivative of the solution, see e.g.\ \cite{LevequeRed}. 
The reinterpretation of a nonlinear heat equation as conservation law has been studied for degenerated parabolic partial differential equations in \cite{Bessemoulin2012,Buerger2008, Liu2011, Pandian1989}.

We use a fractional step method \cite{LevequeRed} to separately solve the classical linear diffusion and the advection part in one time step. 
The classical linear diffusion is aprroximated by the explicit first order method
$\mathcal{H}^{(1)}\colon \mathbb{R}^\mathbb{Z} \to \mathbb{R}^\mathbb{Z}$ with
\begin{align}
(\mathcal{H}^{(1)}(u))_i &= u_i+a \Delta t D^+D^-(u)_i, \label{eq:HeatEq}
\end{align}
where \begin{align*}
D^+D^- (u)_i = \frac{u_{i+1}-2u_i+u_{i-1}}{(\Delta x)^2}
\end{align*}
is the finite difference approximation of the second derivative. 
From literature we know that
this linear method is $||\cdot||_\infty$-stable, i.e.\ $||\mathcal{H}^{(1)}(u)||_\infty \leq ||u||_\infty$ if 
\begin{equation}\label{eq:stab}
a\frac{\Delta t}{(\Delta x)^2}\leq \frac{1}{2}.
\end{equation}
In a second step, we approximate the advection part by an adapted upwind-scheme $\mathcal{H}^{(2)}\colon \mathbb{R}^\mathbb{Z} \to \mathbb{R}^\mathbb{Z}$ in conservative form 
\begin{align}
(\mathcal{H}^{(2)}(u))_i &= u_i-\frac{\Delta t}{\Delta x} (\mathcal{F}_{i+\frac{1}{2}}(u)-\mathcal{F}_{i-\frac{1}{2}}(u)) \label{eq:AdvEq}
\end{align}
with numerical fluxes $F_{i-\frac{1}{2}}(u)$. Since the flux function depends on the derivative of the solution, we first approximate the first derivative with the central difference \[D^0(u)_i := \frac{u_{i+1}-u_{i-1}}{2 \Delta x}\]
and set \[\partial_{i+\frac{1}{2}}(u) := \frac{D^0(u)_{i+1}+D^0(u)_i}{2}\]
as the approximation of $u_x$ at the right boundary of the cell $[x_{i-\frac{1}{2}},x_{i+\frac{1}{2}}]$. The numerical flux is then defined by
\begin{align*}
\mathcal{F}_{i+\frac{1}{2}}(u) := 
\begin{cases}
2b \partial_{i+\frac{1}{2}}(u) u_i \quad &\text{ if } \quad \partial_{i+\frac{1}{2}}(u) \geq 0, \\
2b \partial_{i+\frac{1}{2}}(u) u_{i+1} \quad &\text{ if } \quad \partial_{i+\frac{1}{2}}(u) < 0.
\end{cases}
\end{align*}
To ensure that the analytic domain of dependence is included in the numerical domain of dependence, the CFL condition 
\begin{align}
\frac{\Delta t}{\Delta x} \max_{i \in \mathbb{Z}}\{|2b \partial_{i+\frac{1}{2}}(u)|\} \leq 1 \label{eq:CFL}
\end{align}
must be satisfied in each iteration-step.
Fusing both discretization approaches for the diffusion and advection part leads to the numerical scheme $\mathcal{H} \colon \mathbb{R}^\mathbb{Z} \to \mathbb{R}^\mathbb{Z}$ defined by $\mathcal{H} := \mathcal{H}^{(1)} \circ \mathcal{H}^{(2)}$.

Provided the initial data is positive, the solution of the diffusion-aggregation equation remains positive, see Theorem \ref{existWeak}. 
This property shall be also hold for the numerical scheme 
and thus we must guarantee that $\mathcal{H}$ is positive-preserving, cf. \cite{ChertockKurganov2008,ChertockKurganovWang2012} for the chemotaxis model. 
In contrast to a straightforward approximation of the diffusion-aggregation equation, the numerical scheme we propose is positive-preserving independent of the choice of the parameters $a,b \geq 0$, see Theorem \ref{thm:pos_pres}. This allows to numerically evaluate the transition from the diffusion to the aggregation regime in section \ref{sec:num_res}.
\begin{thm}\label{thm:pos_pres}
The numerical scheme $\mathcal{H}$ is positive-preserving if 
\begin{align}
\Delta t \leq \min\left\{\frac{(\Delta x)^2}{2a},\frac{\Delta x}{4b \max_{i \in \mathbb{Z}}\{|\partial_{i+\frac{1}{2}}(u)|\}}\right\}, \label{eq:CFL2}
\end{align}
i.e.\ for all $u \in \mathbb{R}^\mathbb{Z}$ with $u \geq 0$ it follows $\mathcal{H}(u) \geq 0$.
\end{thm}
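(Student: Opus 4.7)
The plan is to exploit the factorization $\mathcal{H} = \mathcal{H}^{(1)} \circ \mathcal{H}^{(2)}$ and prove that, under the stated restriction on $\Delta t$, each of the two substeps individually maps non-negative sequences to non-negative sequences. Composition then yields the claim. Throughout, I would abbreviate $\lambda_1 = a\Delta t/(\Delta x)^2$ and $\lambda_2 = 2b\Delta t/\Delta x$, so that the bound \eqref{eq:CFL2} becomes $\lambda_1 \leq 1/2$ and $\lambda_2 \max_i|\partial_{i+\frac{1}{2}}(u)| \leq 1/2$.

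First, I would treat the diffusion step $\mathcal{H}^{(1)}$. A direct rewriting gives
\begin{align*}
(\mathcal{H}^{(1)}(u))_i = (1-2\lambda_1)u_i + \lambda_1 u_{i+1} + \lambda_1 u_{i-1},
\end{align*}
which is a convex combination of $u_{i-1}, u_i, u_{i+1}$ precisely when $\lambda_1 \leq 1/2$, i.e.\ under the first condition in \eqref{eq:CFL2}. Hence if $u \geq 0$, then $\mathcal{H}^{(1)}(u) \geq 0$. This step is the textbook parabolic CFL and presents no difficulty.

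The harder step is to show that the advection substep $\mathcal{H}^{(2)}$ preserves positivity under $\lambda_2 \max_i |\partial_{i+\frac{1}{2}}(u)| \leq 1/2$. I would proceed by a case analysis on the signs of $\partial_{i+\frac{1}{2}}(u)$ and $\partial_{i-\frac{1}{2}}(u)$, since the upwind prescription depends on these signs. Writing $s_\pm := \partial_{i\pm\frac{1}{2}}(u)$, four cases arise: (i) $s_+\geq 0, s_-\geq 0$; (ii) $s_+ <0, s_-<0$; (iii) $s_+ < 0, s_-\geq 0$; (iv) $s_+\geq 0, s_-<0$. In cases (i)--(iii) the cell value $u_i$ appears with coefficient $1-\lambda_2 s_+$, $1+\lambda_2 s_-$ or the inflow only, and only the bound $\lambda_2|s_\pm|\leq 1$ is needed; the remaining contributions have the correct sign. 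The decisive case is (iv), where both fluxes take $u_i$ as their upwind value and one obtains
\begin{align*}
(\mathcal{H}^{(2)}(u))_i = u_i\bigl(1 - \lambda_2(s_+ - s_-)\bigr)
\end{align*}
with $s_+ - s_- \leq 2\max_i|\partial_{i+\frac{1}{2}}(u)|$. Positivity therefore requires $\lambda_2 \cdot 2\max_i|\partial_{i+\frac{1}{2}}(u)| \leq 1$, which is exactly the factor $4b$ appearing in \eqref{eq:CFL2}. This diverging-flow case is the main obstacle: it is what forces the extra factor $2$ compared with the usual scalar upwind CFL condition \eqref{eq:CFL}.

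Having established $\mathcal{H}^{(2)}(u) \geq 0$ for $u \geq 0$, I would conclude by applying the positivity of $\mathcal{H}^{(1)}$ to $\mathcal{H}^{(2)}(u)$, noting that the CFL bound for $\mathcal{H}^{(1)}$ involves only $a$ and the grid and is unaffected by the advection substep. Combining both substeps gives $\mathcal{H}(u) = \mathcal{H}^{(1)}(\mathcal{H}^{(2)}(u)) \geq 0$ whenever $u\geq 0$, under the joint condition \eqref{eq:CFL2}, which completes the proof.
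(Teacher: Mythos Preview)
Your proof is correct and follows essentially the same strategy as the paper: show that each substep $\mathcal{H}^{(1)}$ and $\mathcal{H}^{(2)}$ is positive-preserving under the respective halves of \eqref{eq:CFL2}, and conclude by composition. The only cosmetic difference is that the paper bypasses your four-case split and writes the worst-case bound $(\mathcal{H}^{(2)}(u))_i \geq u_i\bigl(1-2b\frac{\Delta t}{\Delta x}(|\partial_{i+\frac{1}{2}}(u)|+|\partial_{i-\frac{1}{2}}(u)|)\bigr)$ directly, which is precisely your case (iv); your explicit case analysis makes clearer why the remaining cases are harmless and why the factor $4b$ (rather than $2b$) is needed.
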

\begin{proof}
If both methods $\mathcal{H}^{(1)}$ and $\mathcal{H}^{(2)}$ are positive-preserving, then the composition $\mathcal{H}$ is also positive preserving. Let $u \in \mathbb{R}^\mathbb{Z}$ with $u \geq 0$ be given. We have
\begin{align*}
\mathcal{H}^{(1)}(u)_i &= u_i+a \frac{\Delta t}{(\Delta x)^2}(u_{i+1}-2u_i+u_{i-1})\\
	&= u_i(1-2a\frac{\Delta t}{(\Delta x)^2})+a\frac{\Delta t}{(\Delta x)^2}(u_{i+1}+u_{i-1})\\
	&\geq 0
\end{align*}
due to condition \eqref{eq:stab} and $a\geq 0$. This shows $\mathcal{H}^{(1)}(u) \geq 0$. 
To prove that $\mathcal{H}^{(2)}(u) \geq 0$, we consider the following estimate
\begin{align*}
	\mathcal{H}^{(2)}(u)_i &= u_i-2b\frac{\Delta t}{\Delta x}(\partial_{i+\frac{1}{2}}(u) u_i- \partial_{i-\frac{1}{2}}(u)u_{i})\\
				&= u_i\big(1-2b \frac{\Delta t}{\Delta x} (|\partial_{i+\frac{1}{2}}(u)|+|\partial_{i-\frac{1}{2}}(u)|)\big).
\end{align*}
From \eqref{eq:CFL2} we know that
\begin{align*}
2b \frac{\Delta t}{\Delta x} \big(|\partial_{i+\frac{1}{2}}(u)|+|\partial_{i-\frac{1}{2}}(u)|\big) \leq 1
\end{align*}
and consequently $\mathcal{H}^{(2)}(u)_i  \geq 0$.
\end{proof}
In the next section, we present numerical results based on the discretizations proposed for the stochastic particle system
and the diffusion-aggregation equation.

\section{Numerical results}\label{sec:num_res}

\subsection{Results for the stochastic particle system}
We consider the Euler-Maruyama scheme \eqref{eq:Euler1}-\eqref{eq:Euler2} for the stochastic particle system and fix $b = 1$ in the following and vary $a$ according to the relation $a = 2b||u_0||_{L^\infty}\eta$ for $\eta \geq 0$. Theorem \ref{existWeak} motivates to distinguish two cases
for the diffusion-aggregation equation: 
we call $\eta>1$ the diffusion and $\eta<1$ the aggregation case. 
We choose the initial densities as the superposition of normalized Barenblatt profiles introduced in section \ref{subsec:ParticleSystem}. In detail, we use $m = 3$ profiles with 
$$x_0 = \big(-\sqrt{12 T_1^\frac{2}{3}}(1+\frac{1}{\beta_1}),0,\sqrt{12 T_3^\frac{2}{3}}(1+\frac{1}{\beta_3})\big)$$ 
such that the corresponding supports are disjoint and set $T_l = 2$ for $l = 1,2,3$. The two different initial distributions $u_0$ we consider in the following are given in figure \ref{fig:PDFInitial}.

Furthermore, the interaction kernel $V$ is chosen as the density of a standard normal distribution with weight $b$, i.e. 
$
V(x) = \tfrac{b}{\sqrt{2\pi}}e^{-\tfrac{x^2}{2}}.
$

Note that the parameters $\epsilon$ and $N$ significantly influence the results of the stochastic particle system and need to be chosen in an appropriate way to provide results which are close to the diffusion-aggregation equation. 
We use the estimate \eqref{eq:Estimate} which states an upper bound on the squared $L^2-$distance between the particle and intermediate model.
%
We choose the time horizon $T = 7$, the parameter $\epsilon = 1.5$ and determine $||V^{\prime \prime}||_\infty = \tfrac{b}{\sqrt{2\pi}}$ for the interaction kernel $V$. Then, a particle number of $N = 555$ ensures that the squared $L^2-$distance between the stochastic particle and intermediate model is smaller than $0.3 \cdot C$, where $C$ is the constant in \eqref{eq:Estimate}.

In figures \ref{fig:ResMicPDF1} and \ref{fig:ResMicPDF2} the mean density of 1000 Monte-Carlo samples for both initial distributions $u_0$ is shown. In the cases $\eta = 0.1$ and $\eta = 0.2,$ we can observe the aggregation at local maxima, whereas in the cases $\eta = 0.6$ and $\eta = 1$ 
we observe a more diffusive behavior. 
This is emphasized by figure \ref{fig:ResMicSup}, where the running supremum $t \mapsto \sup\{||u(s,\cdot)||_\infty \colon s\leq t\}$ is drawn. 

For this choice of parameters,
we expect the threshold between aggregation and diffusion to be between $\eta = 0.2$ and $\eta = 0.6$ . 
The maximal value of the sampled mean density behaves almost linear in time and is directly related to the value of $\eta$. Concerning the different initial values, there is no severe difference between the shapes of the running supremum.

Since we are not in the limiting regime $\epsilon \to 0$ and $N \to \infty$, we already observe diffusion for $\eta \ll 1$. In detail, the diffusion in the mean density arising from the Brownian motion is independent of the number of particles, whereas the aggregation highly depends on the number of particles $N$ and the range of strong interactions measured by $\epsilon$.

\begin{figure}[H]
\centering
\subfloat{
\includegraphics[width=0.49\textwidth]{./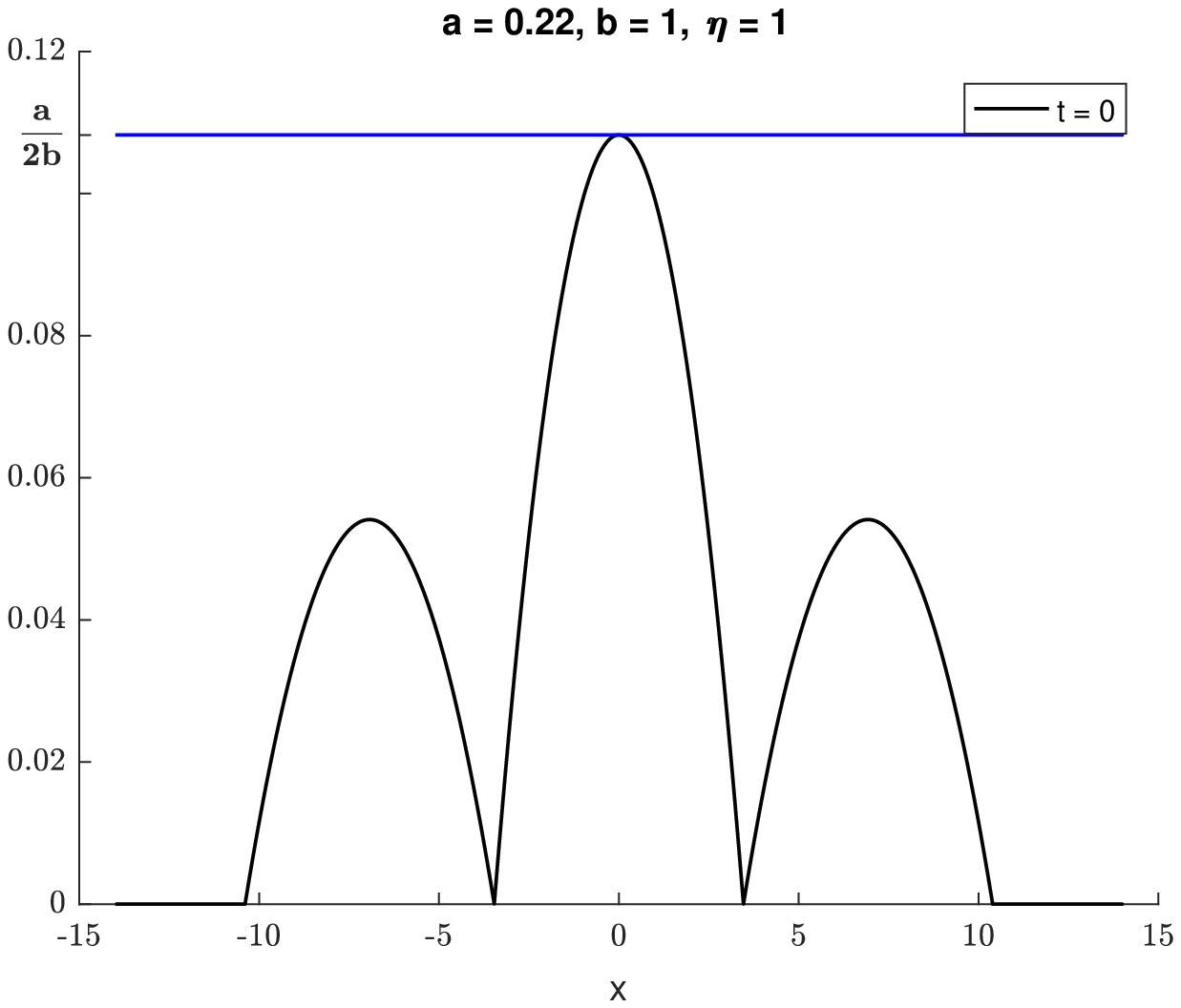}
}
\subfloat{
\includegraphics[width=0.49\textwidth]{./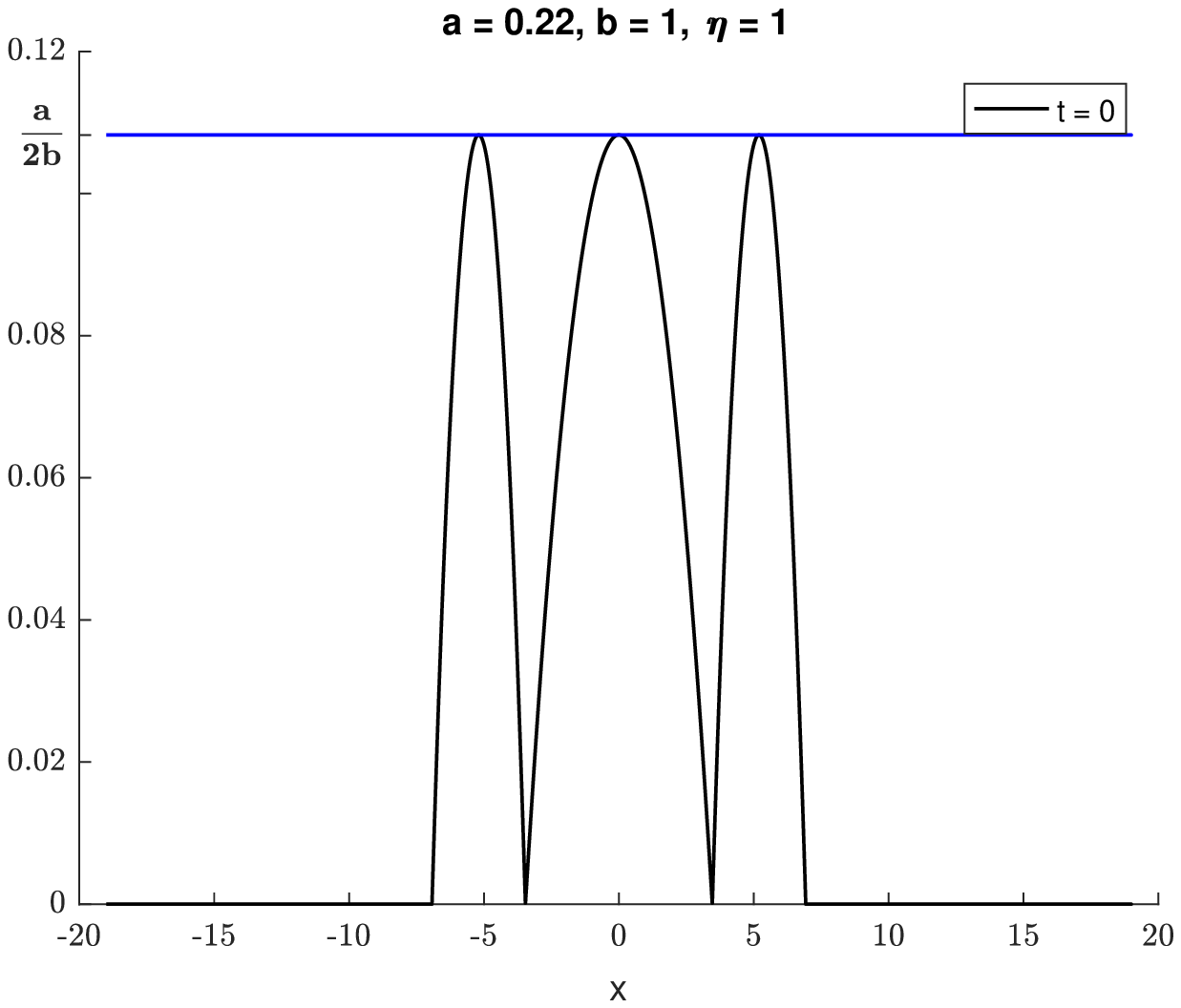}
}
\caption{Pdf's of the initial distribution with parameters from left to the right: $\alpha = (\frac{1}{4},\frac{1}{2},\frac{1}{4}), \beta = (1,1,1)$; $\alpha = (\frac{1}{4},\frac{1}{2},\frac{1}{4}), \beta = (2,1,2)$}
\label{fig:PDFInitial}
\end{figure} \vspace{-5mm}
\begin{figure}[H]
\centering
\subfloat{
\includegraphics[width=0.49\textwidth]{./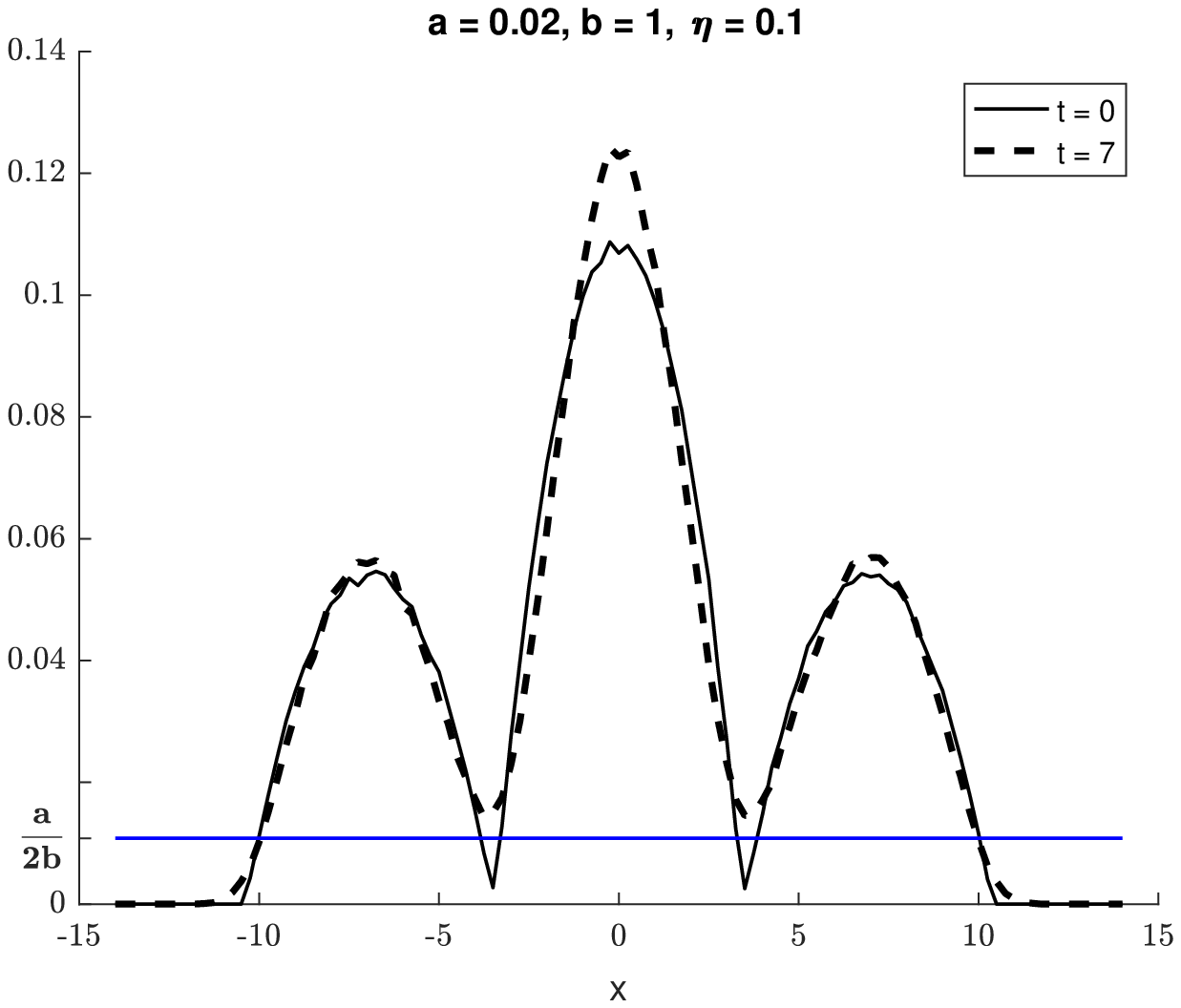}
}
\subfloat{
\includegraphics[width=0.49\textwidth]{./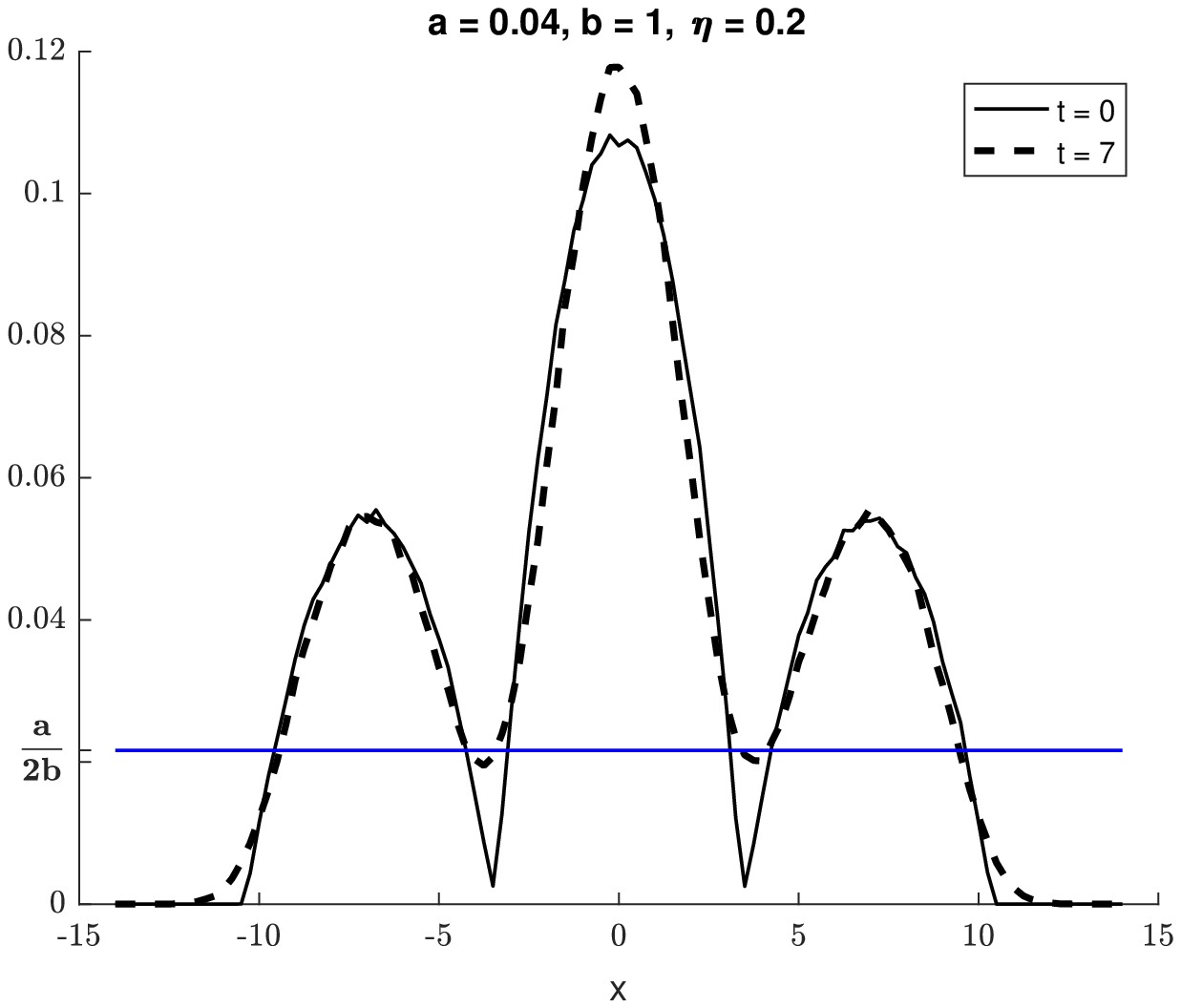}
}\\
\subfloat{
\includegraphics[width=0.49\textwidth]{./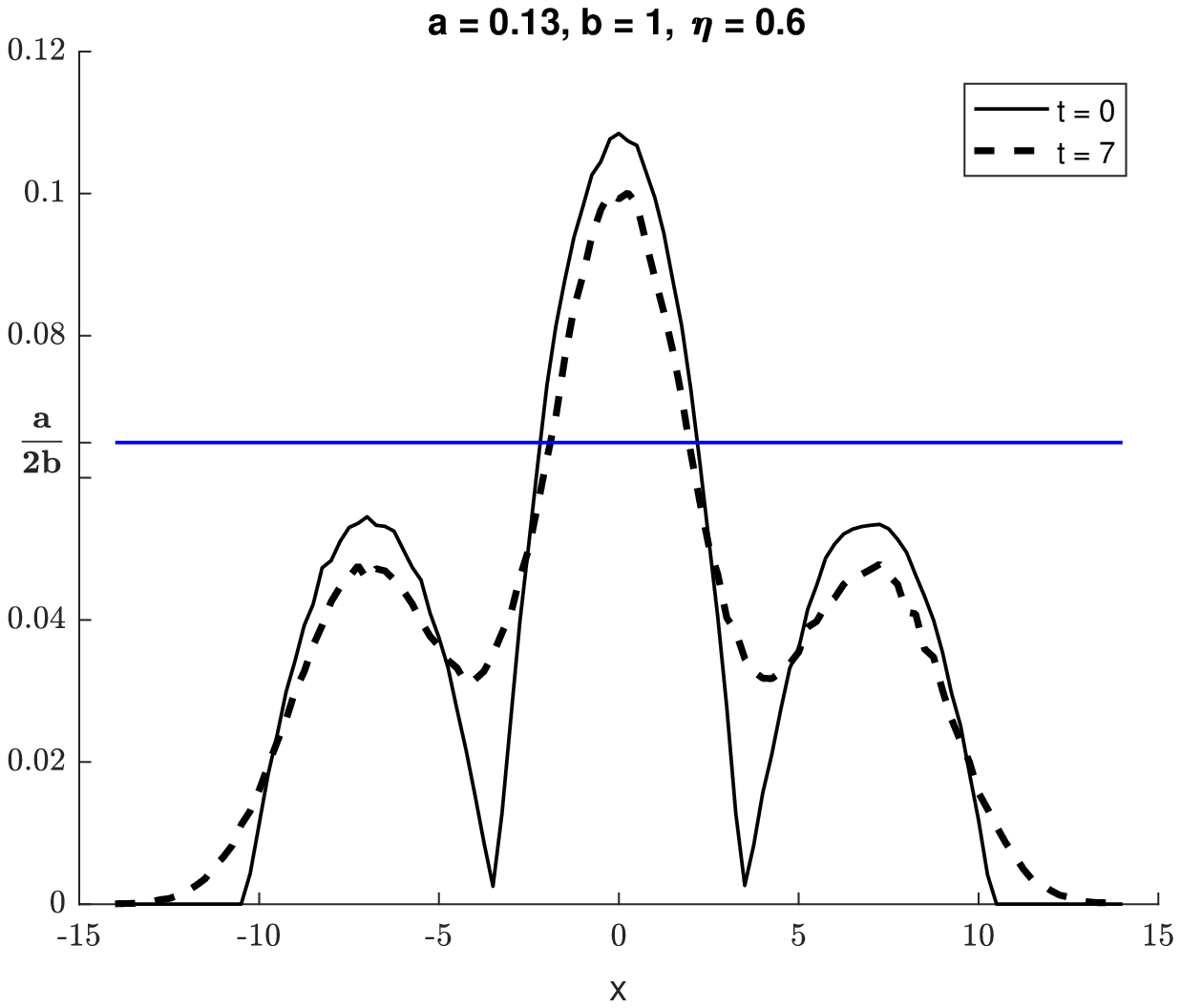}
}
\subfloat{
\includegraphics[width=0.49\textwidth]{./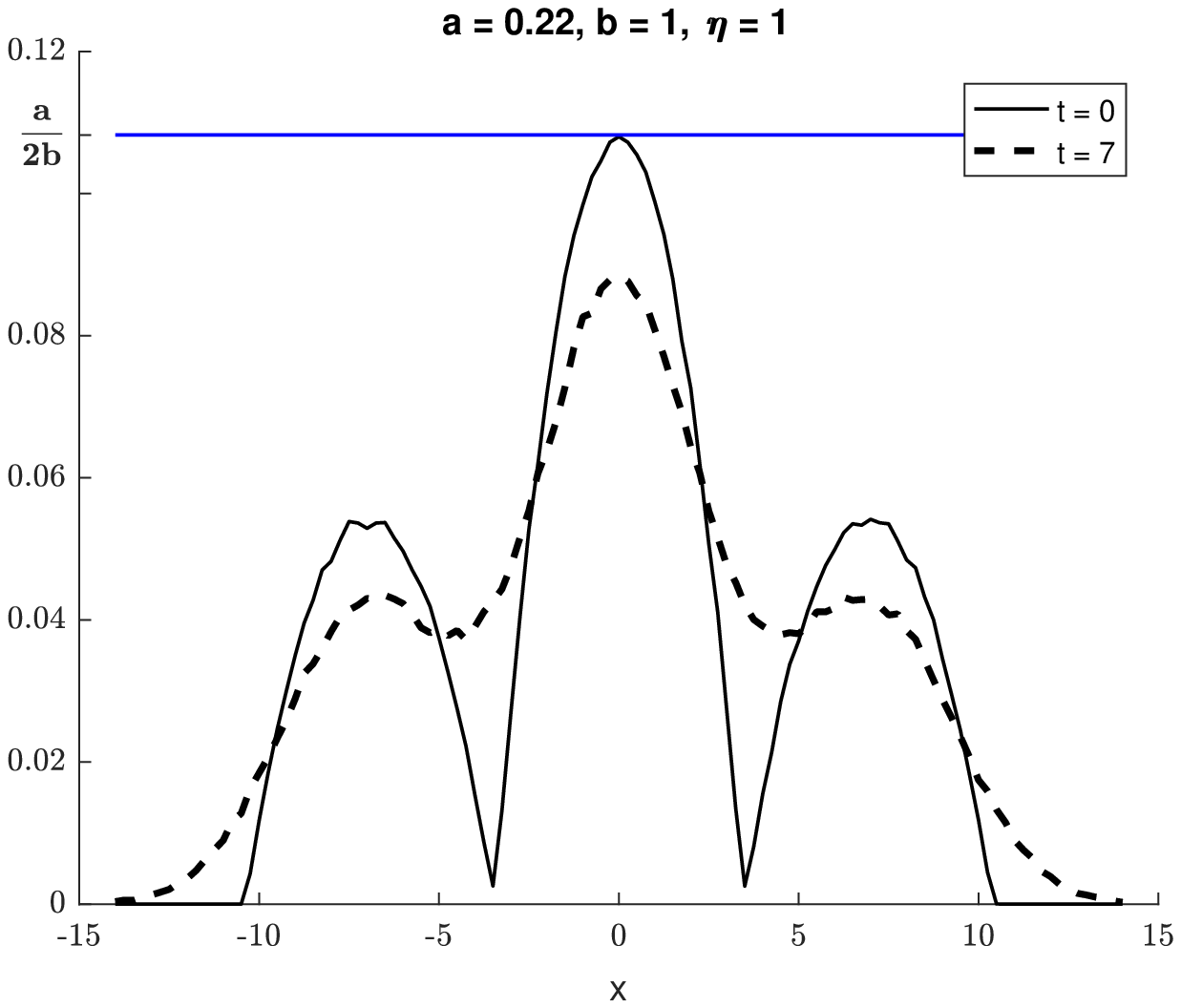}
}
\caption{Results for the stochastic particle system with $N=555$ particles, initial distribution 1 and different values for $\eta$}
\label{fig:ResMicPDF1}
\end{figure}
\begin{figure}[H]
\centering
\subfloat{
\includegraphics[width=0.49\textwidth]{./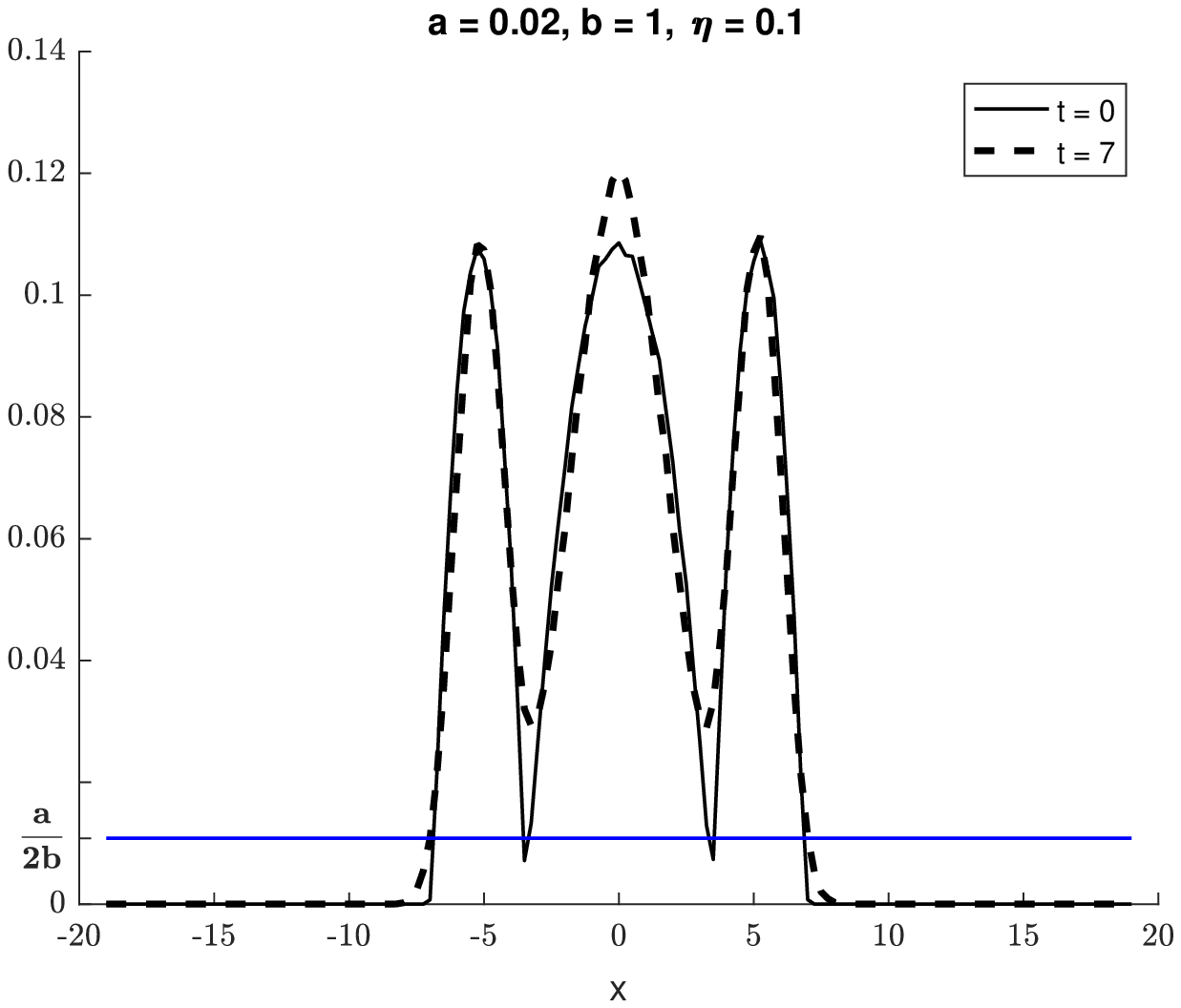}
}
\subfloat{
\includegraphics[width=0.49\textwidth]{./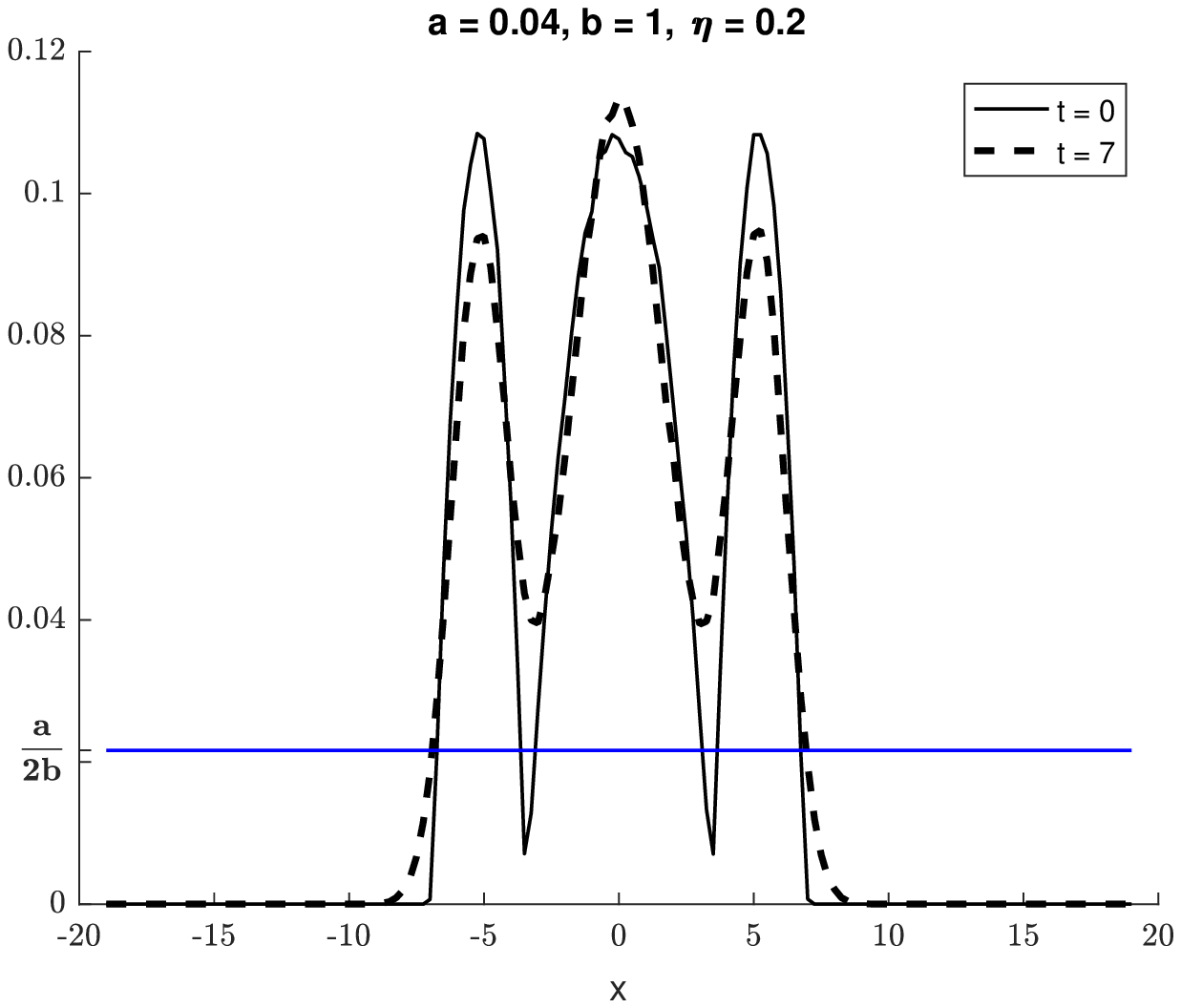}
}\\
\subfloat{
\includegraphics[width=0.49\textwidth]{./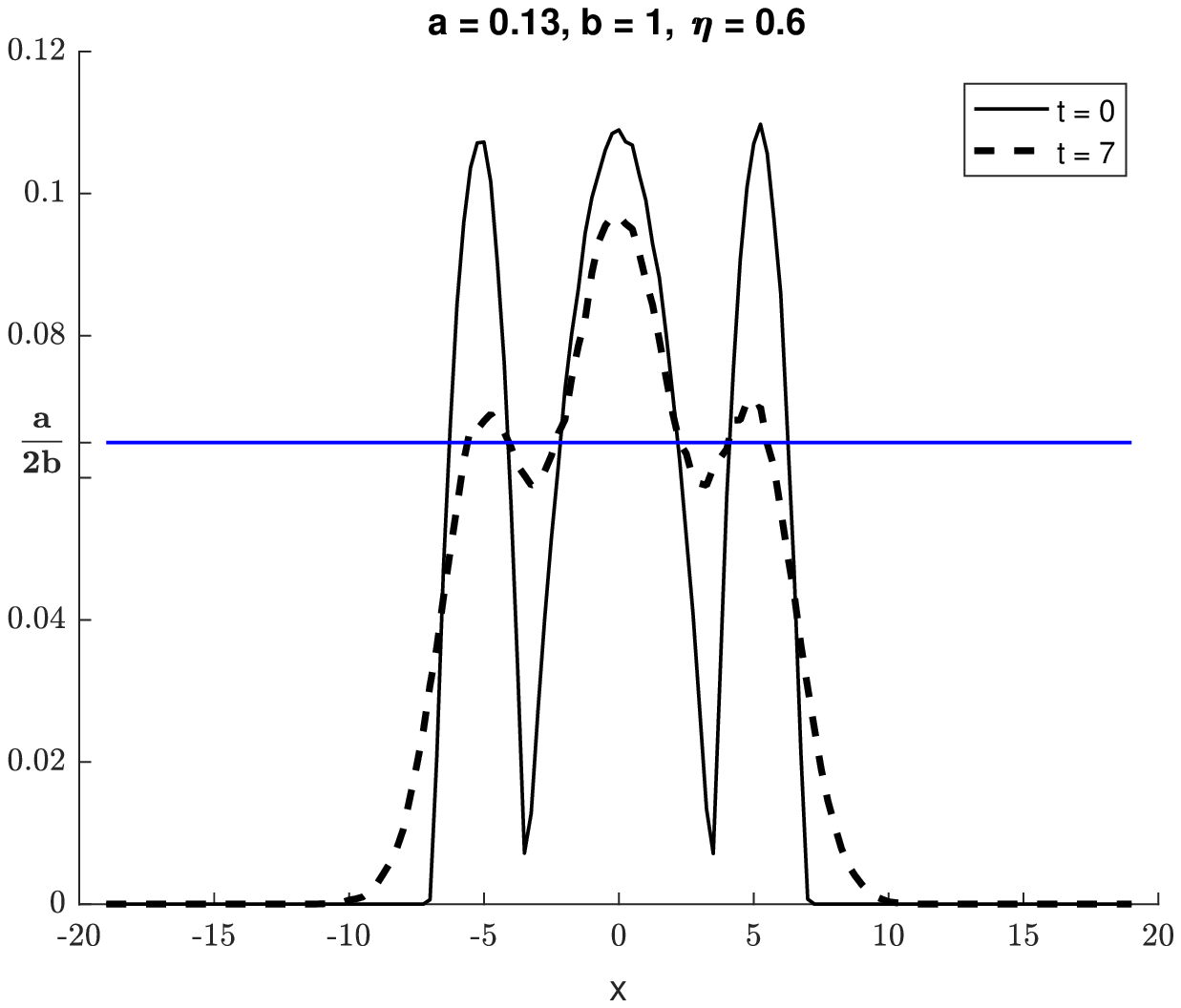}
}
\subfloat{
\includegraphics[width=0.49\textwidth]{./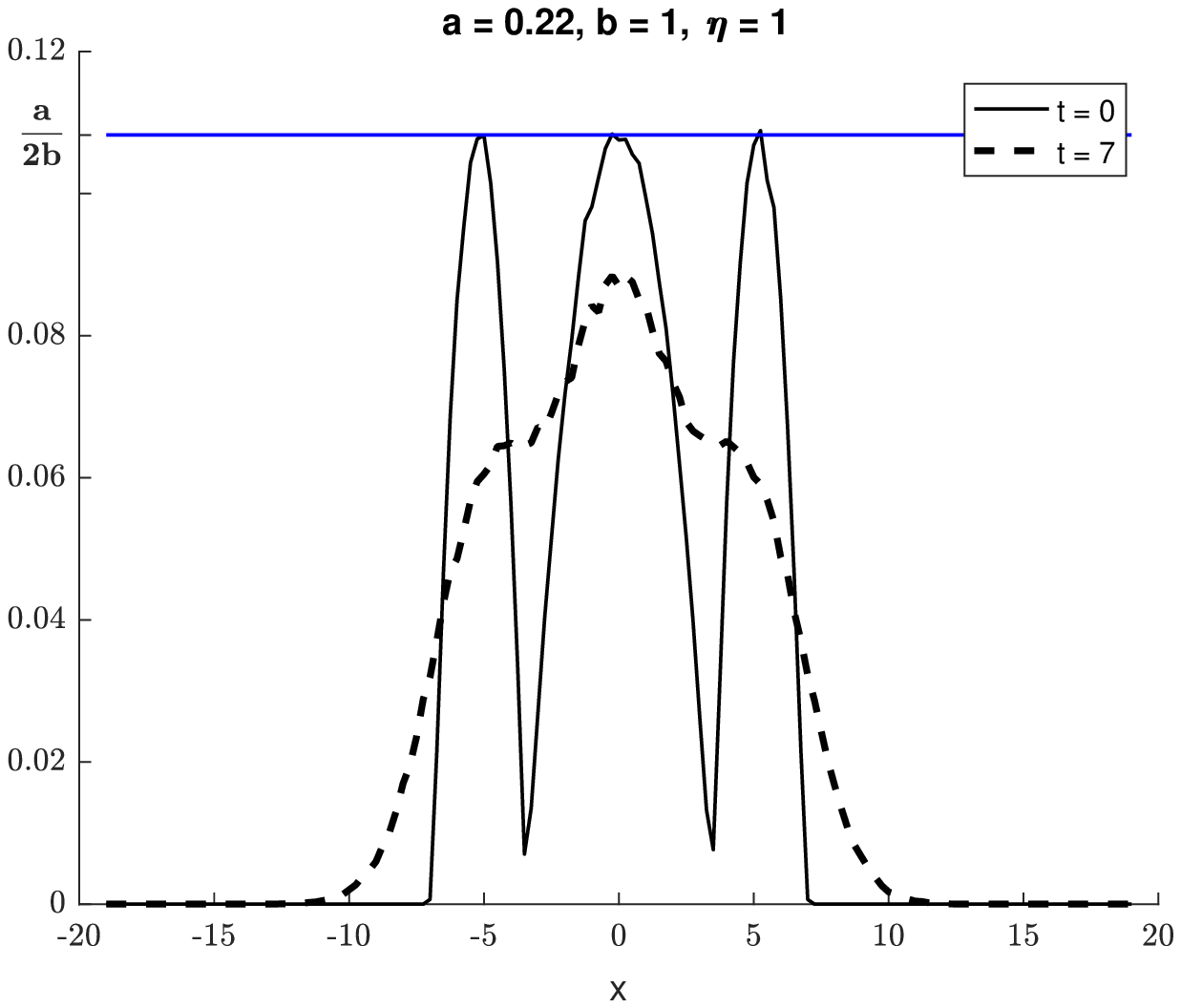}
}
\caption{Results for the stochastic particle system with $N=555$ particles, initial distribution 2 and different values for $\eta$}
\label{fig:ResMicPDF2}
\end{figure}\vspace{-5mm}
\begin{figure}[H]
\centering
\subfloat{
\includegraphics[width=0.49\textwidth]{./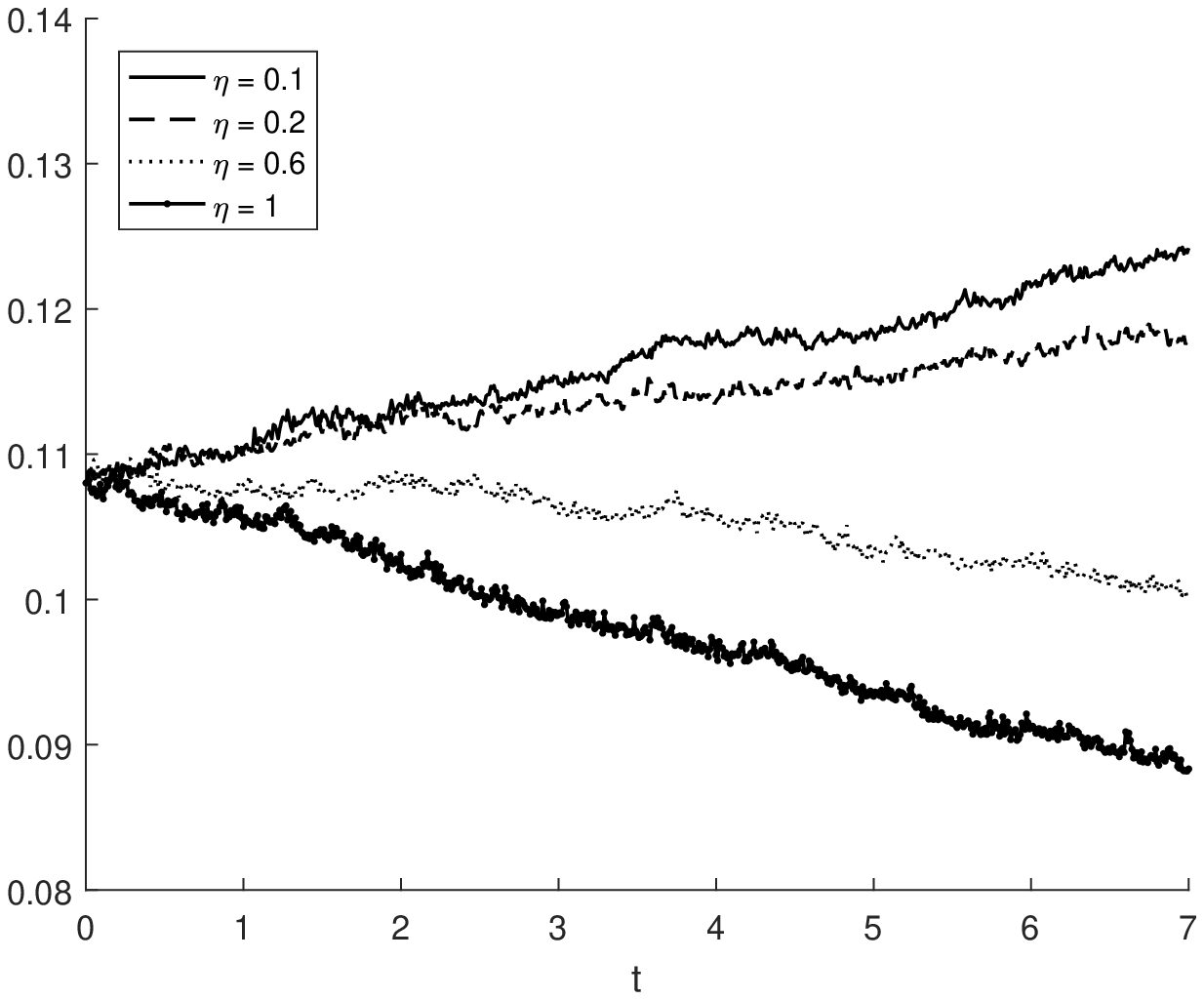}
}
\subfloat{
\includegraphics[width=0.49\textwidth]{./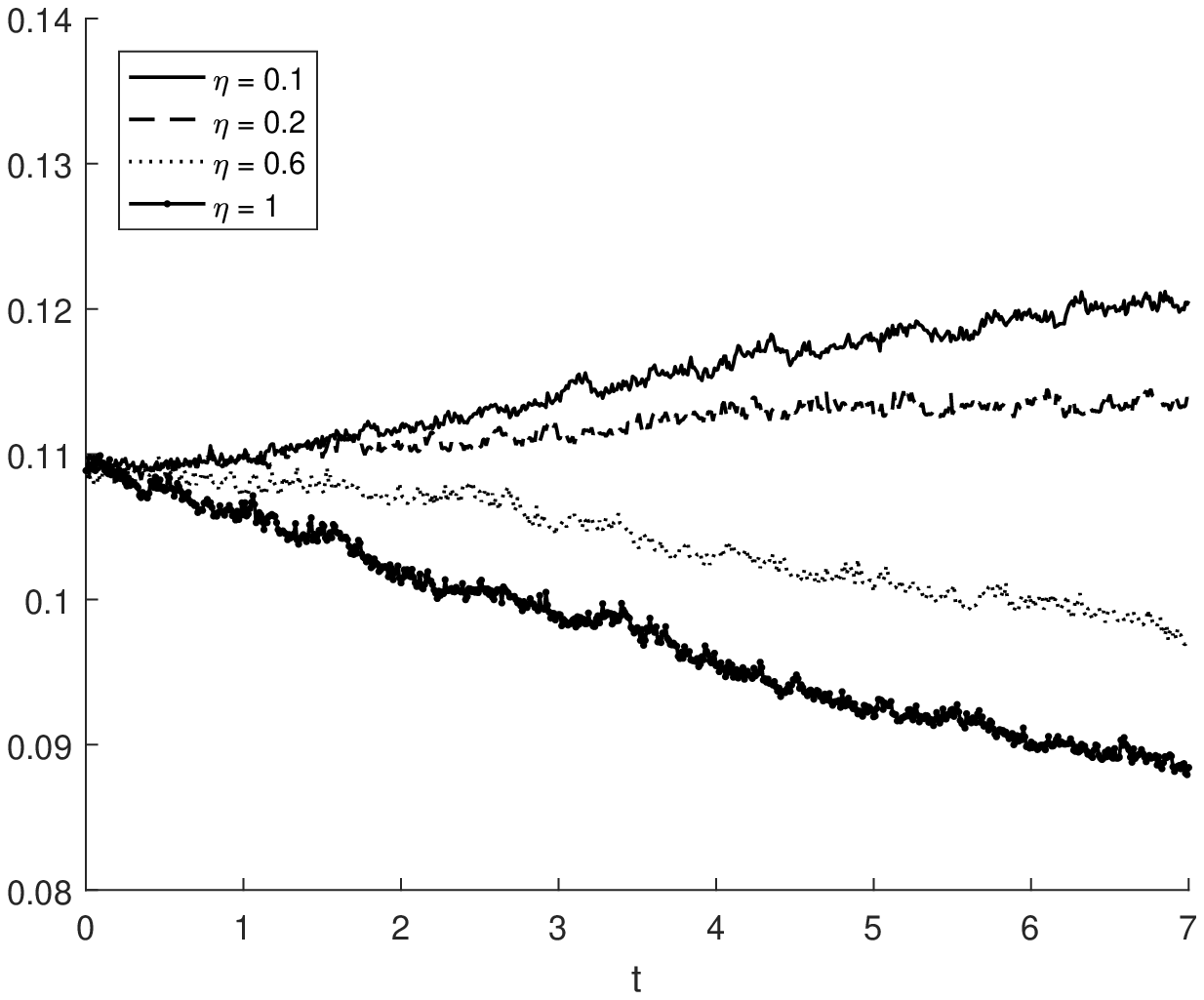}
}
\caption{Running supremum of the stochastic particle density with $N=555$ particles, initial distribution 1 (left) and 2 (right) and different values for $\eta$}
\label{fig:ResMicSup}
\end{figure}

\subsection{Results for the diffusion-aggregation equation}
Now, we apply the numerical scheme from subsection \ref{subsec:NumMac} to both initial profiles, see figure \ref{fig:PDFInitial}. 
In particular, we aim to analyze the threshold $\eta = 1$ which is not covered by the theoretical results.
A simulation result for this choice of $\eta$ can be found in figure \ref{fig:ResMacDiffusion} and 
indicates a diffusive behavior for each initial data. 
The results rely on a fine-scale resolution with 
spatial step-size $\Delta x = 2^{-8}$ and time step-size according to \eqref{eq:CFL2}.
\begin{figure}[H]
\centering
\subfloat{
\includegraphics[width=0.49\textwidth]{./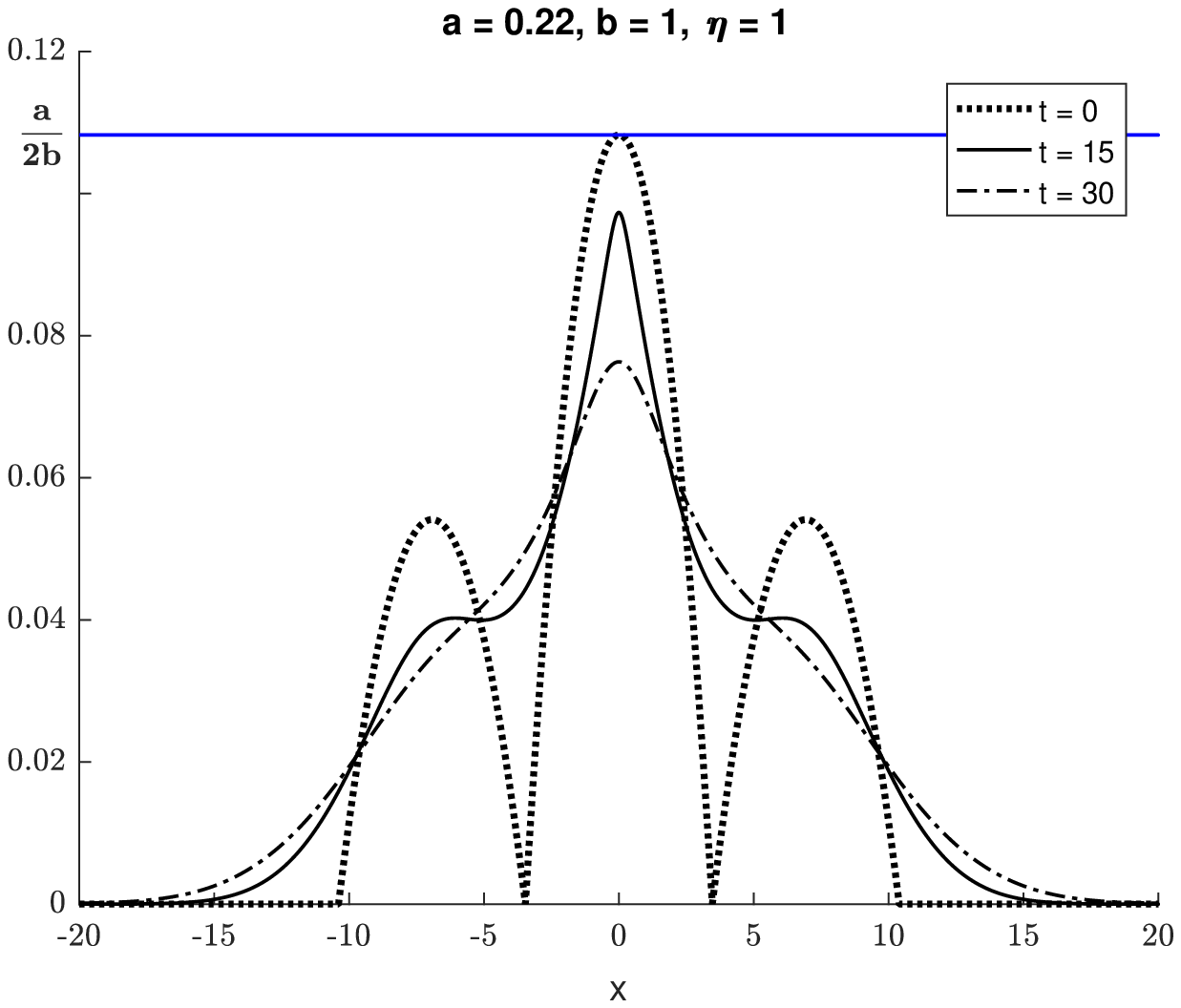}
}
\subfloat{
\includegraphics[width=0.49\textwidth]{./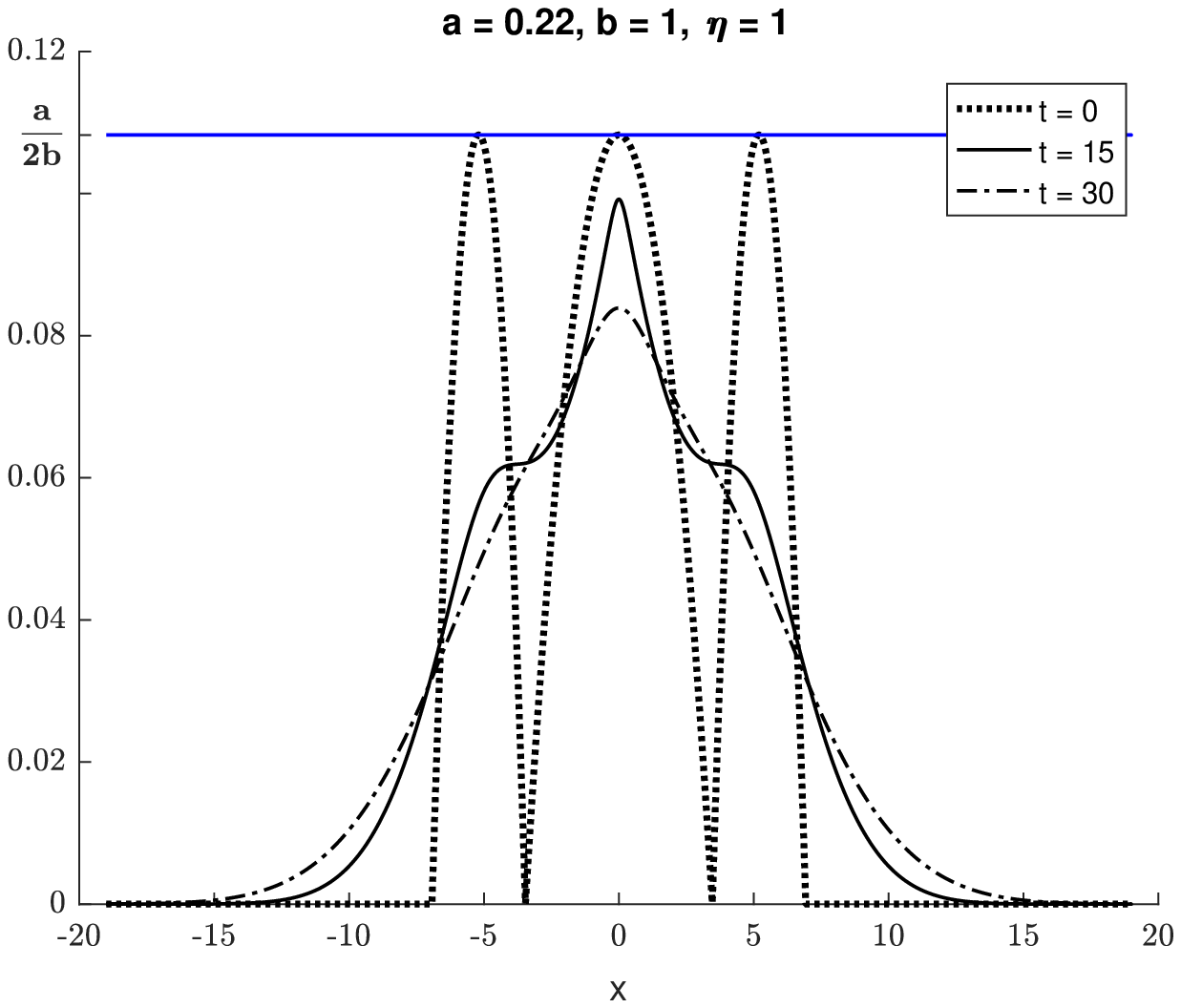}
}
\caption{Mean density for initial data 1 (left) and 2 (right)}
\label{fig:ResMacDiffusion}
\end{figure}

To experimentally verify the convergence of the numerical scheme, we take the reference solution computed with step-size $\Delta x = 2^{-8}$ and study the experimental order of convergence (EOC) with the step-sizes $\Delta x = 2^{-\iota}, \iota = 1,\dots,7$. 
We choose the discrete $L^1-$error \[\texttt{err} = \max_{j}\Delta x \sum_{i \in \mathbb{Z}} |u^{\text{ref}}_i(t_j)-u_i(t_j)|\]  
to measure the difference.

The second and third column of table \ref{tab:convMacro} contain the error and EOC for case 1 (left picture in \ref{fig:ResMacDiffusion}) and 
columns four and five the results for case 2 (right picture in \ref{fig:ResMacDiffusion}), respectively. In both cases, the EOC seems to be at least of order one and the numerical scheme appears to work well regarding the $L^1-$error.
\begin{table}[H]
\centering
\begin{tabular}{c|c|c|c|c}
step-size&\texttt{err} case 1&EOC case 1&\texttt{err} case 2&EOC case 2\\ \hline
$\Delta x = 2^{-1}$&$23.745 \cdot 10^{-3}$&$-$&$48.652 \cdot 10^{-3}$&$-$ \\ \hline
$\Delta x = 2^{-2}$&$12.298 \cdot 10^{-3}$&$0.949$&$26.969 \cdot 10^{-3}$&$0.851$ \\ \hline
$\Delta x = 2^{-3}$&$6.235 \cdot 10^{-3}$&$0.980$&$14.211 \cdot 10^{-3}$&$0.924$ \\ \hline
$\Delta x = 2^{-4}$&$3.501 \cdot 10^{-3}$&$0.833$&$7.479 \cdot 10^{-3}$&$0.926$ \\ \hline
$\Delta x = 2^{-5}$&$1.659 \cdot 10^{-3}$&$1.078$&$3.081 \cdot 10^{-3}$&$1.280$ \\ \hline
$\Delta x = 2^{-6}$&$0.741 \cdot 10^{-3}$&$1.162$&$1.211 \cdot 10^{-3}$&$1.346$ \\ \hline
$\Delta x = 2^{-7}$&$0.270 \cdot 10^{-3}$&$1.458$&$0.400 \cdot 10^{-3}$&$1.600$ \\ \hline
\end{tabular}
\caption{Numerical convergence for $\eta = 1$ and $\Delta x = 2^{-8}$ as reference step-size for the initial density 1 and 2}
\label{tab:convMacro}
\end{table}

Due to numerical diffusion arising from the upwind scheme, we cannot expect a strict regime switch at the theoretical threshold $\eta = 1$.
Depending on the mesh-size, the threshold is expected to be lower than 1.
We examine the threshold by the running supremum $t \mapsto \sup\{||u(s,\cdot)||_\infty \colon s\leq t\}$.  
Figure \ref{fig:ResMacRunSup} shows the running supremum for different values of $\eta$ close to 1. 
From the values of $\eta$ and the shape of the corresponding running supremum, we observe a strict distinction of the diffusion and aggregation regime 
as theoretically assumed.
Additionally, if $\eta$ is decreased  a blow-up occurs and conversely, if $\eta$ is increased, 
the diffusion dominates the supremum. 
In the cases, where the solution follows a diffusive behavior, we observe an increasing supremum until the time $t=12$ which occurs at the center of the 
given profiles, see left picture in figure \ref{fig:ResMac}. 
In the case of initial data 2, the approximated solution increases first at the left and right maxima, see right picture in figure \ref{fig:ResMac},
which is due to the higher slope close to the peaks. The effect of first increasing and then decreasing solutions might be the result of numerical diffusion.

\begin{figure}[htb!]
\centering
\subfloat{
\includegraphics[width=0.49\textwidth]{./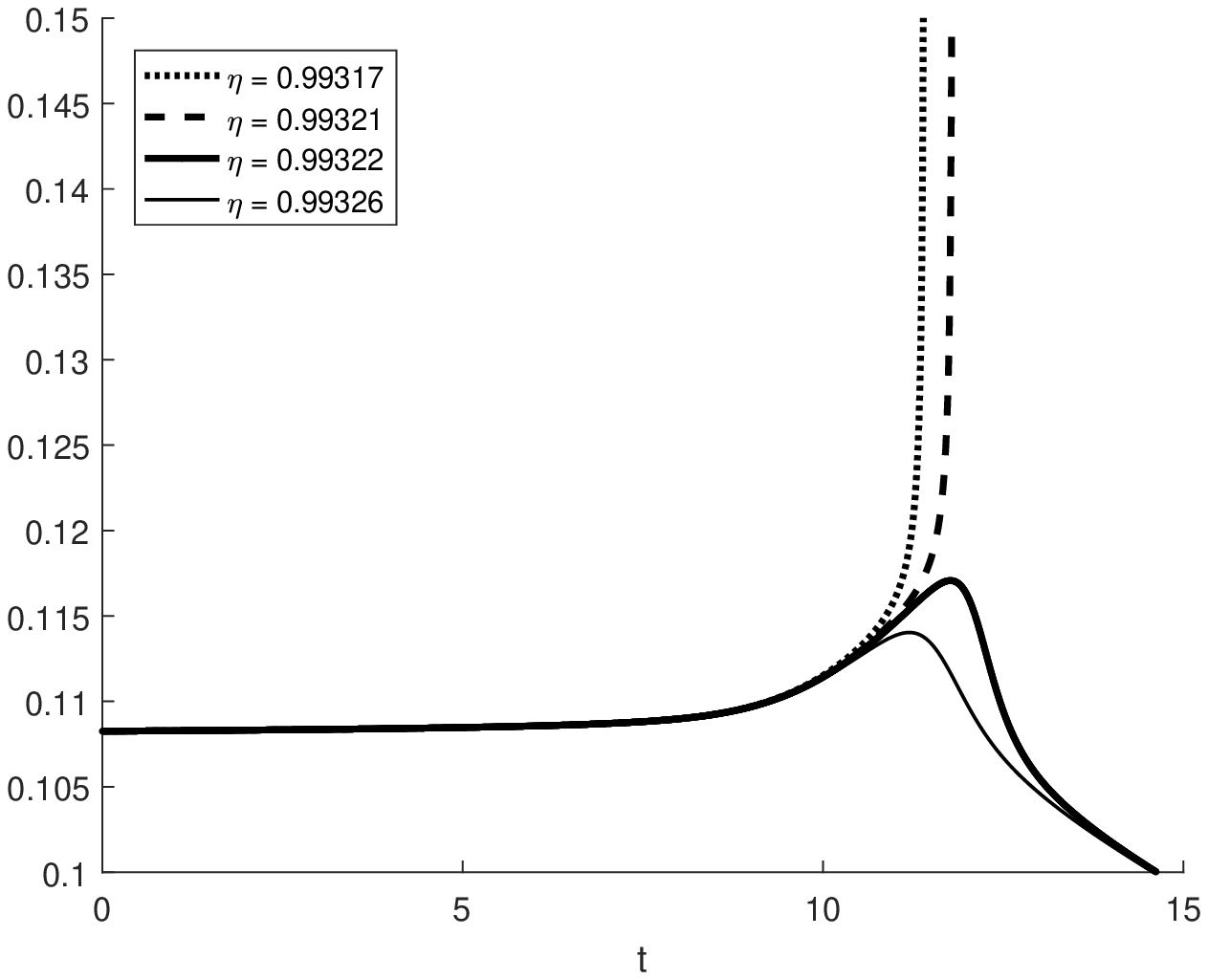}
}
\subfloat{
\includegraphics[width=0.49\textwidth]{./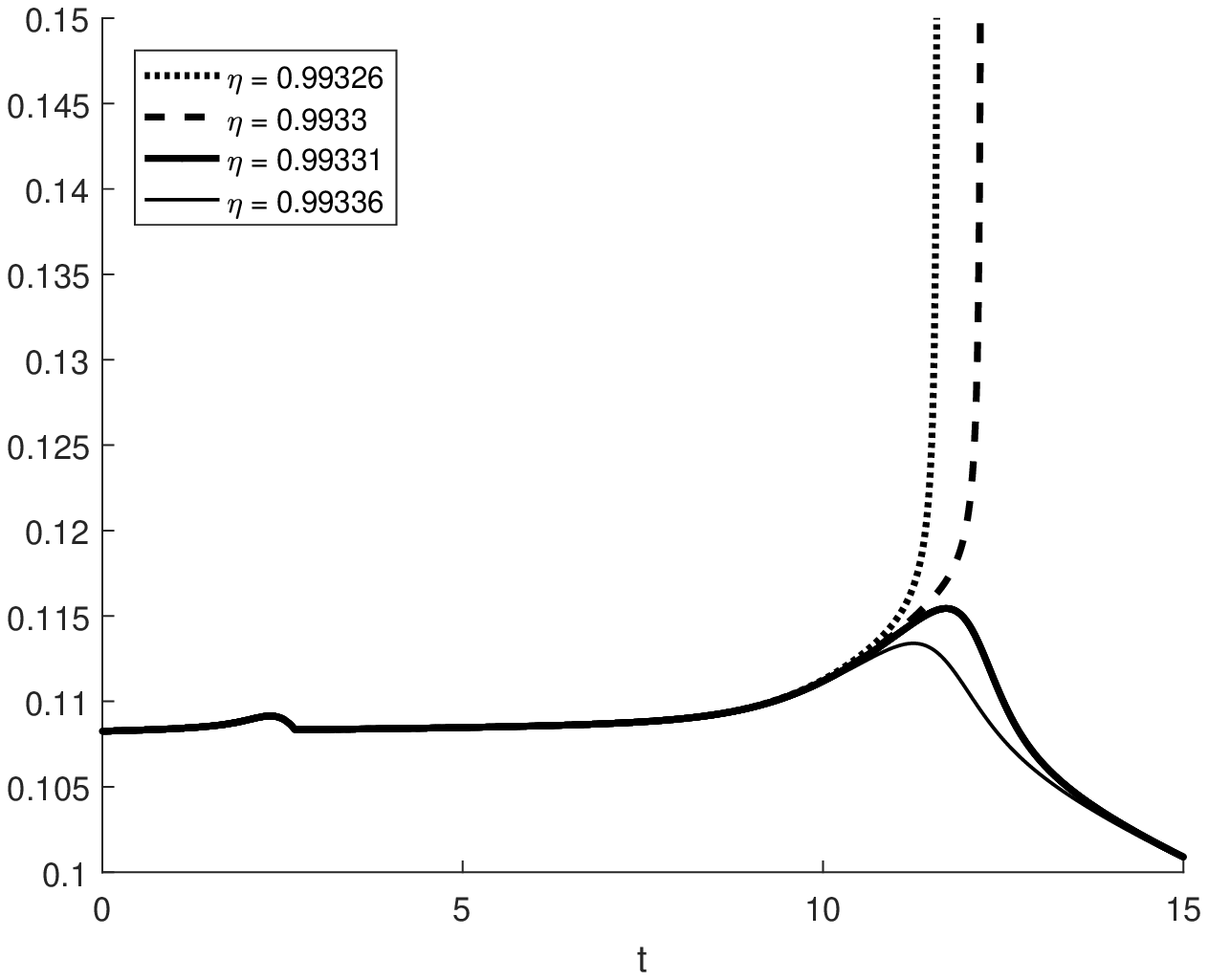}
}
\caption{Running supremum for initial value 1 (left) and 2 (right)}
\label{fig:ResMacRunSup}
\end{figure}

\begin{figure}[htb!]
\centering
\subfloat{
\includegraphics[width=0.49\textwidth]{./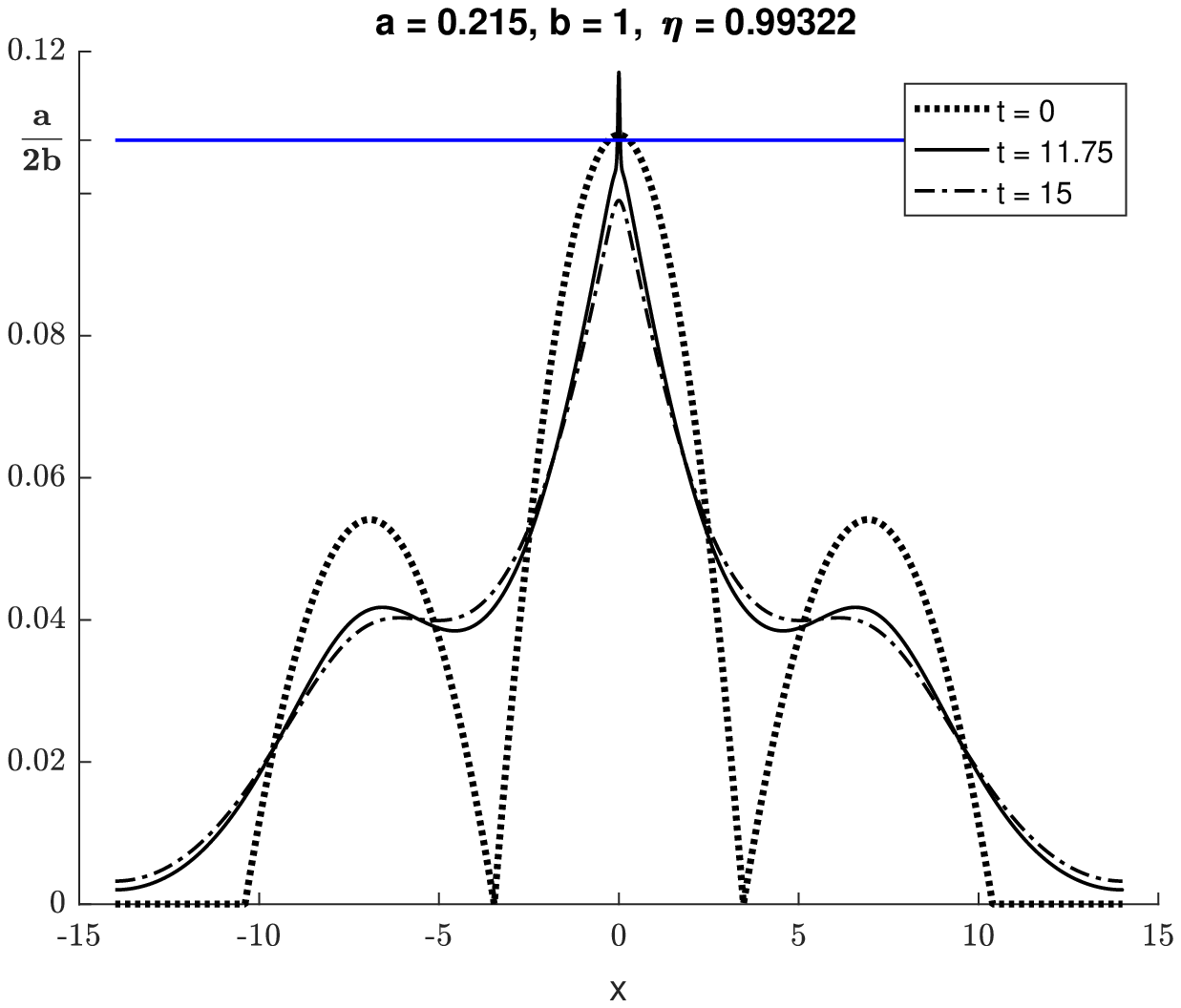}
}
\subfloat{
\includegraphics[width=0.49\textwidth]{./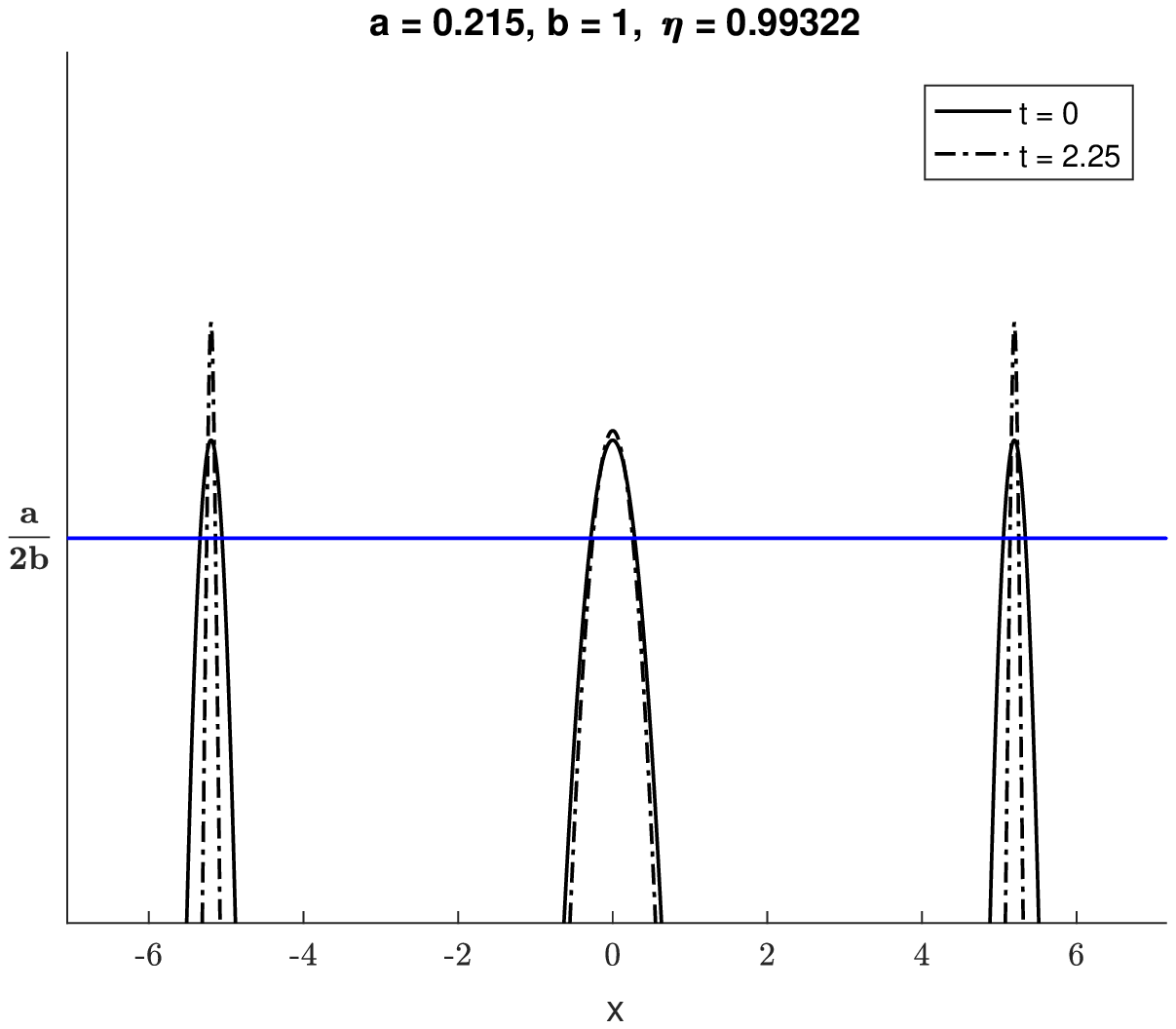}
}
\caption{Aggregation and then diffusion for initial data 1 (left); zoom-in at the small aggregation in the case of initial data 2 (right)}
\label{fig:ResMac}
\end{figure}

In figure \ref{fig:ResMacAggregation}, the simulation results for the aggregation regime are shown. 
However, once the values are above $\frac{a}{2b}$, the numerical approximation starts to peak and blows up, i.e. the numerical solution collapses completely.

\begin{figure}[htb!]
\centering
\subfloat{
\includegraphics[width=0.49\textwidth]{./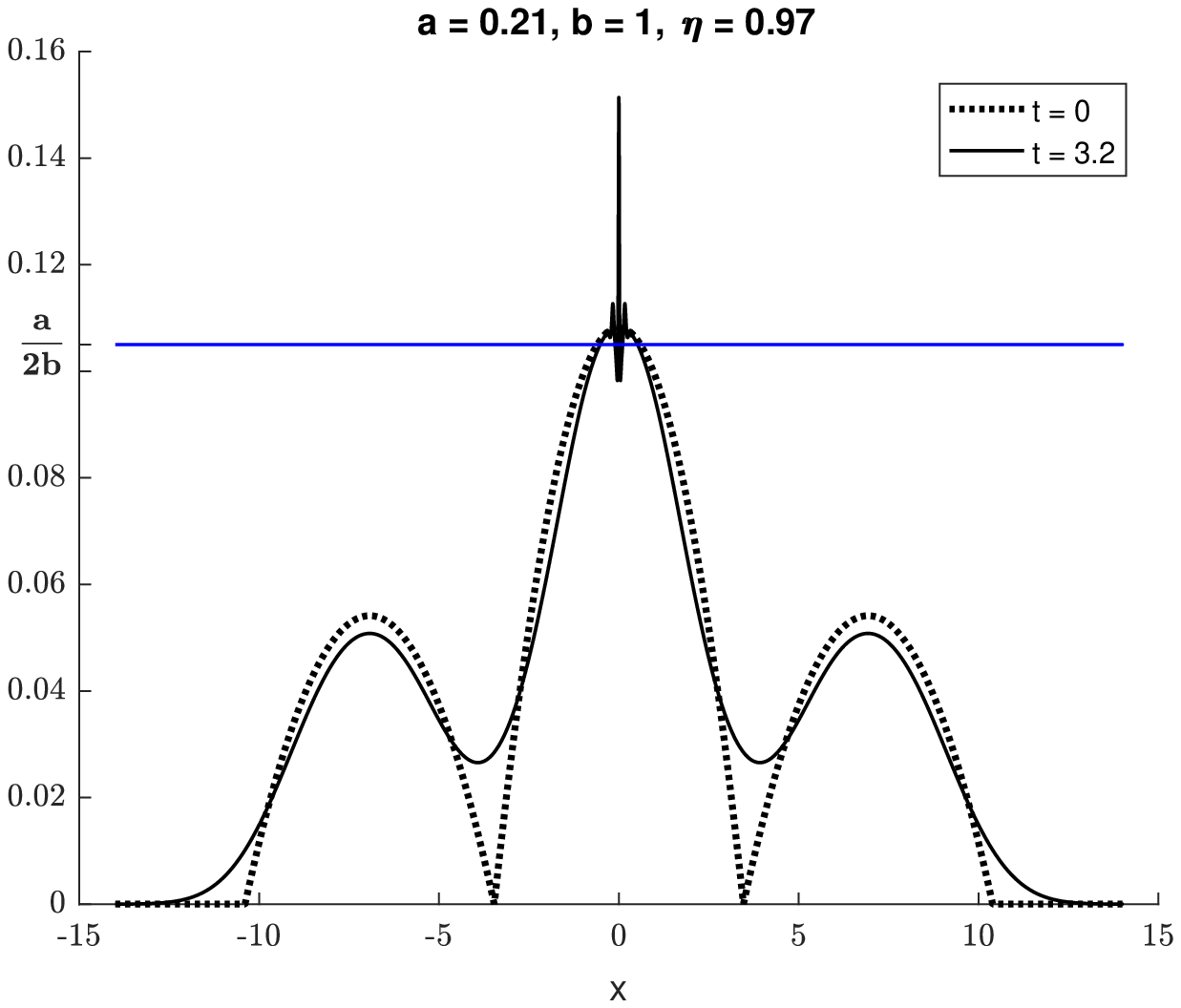}
}
\subfloat{
\includegraphics[width=0.49\textwidth]{./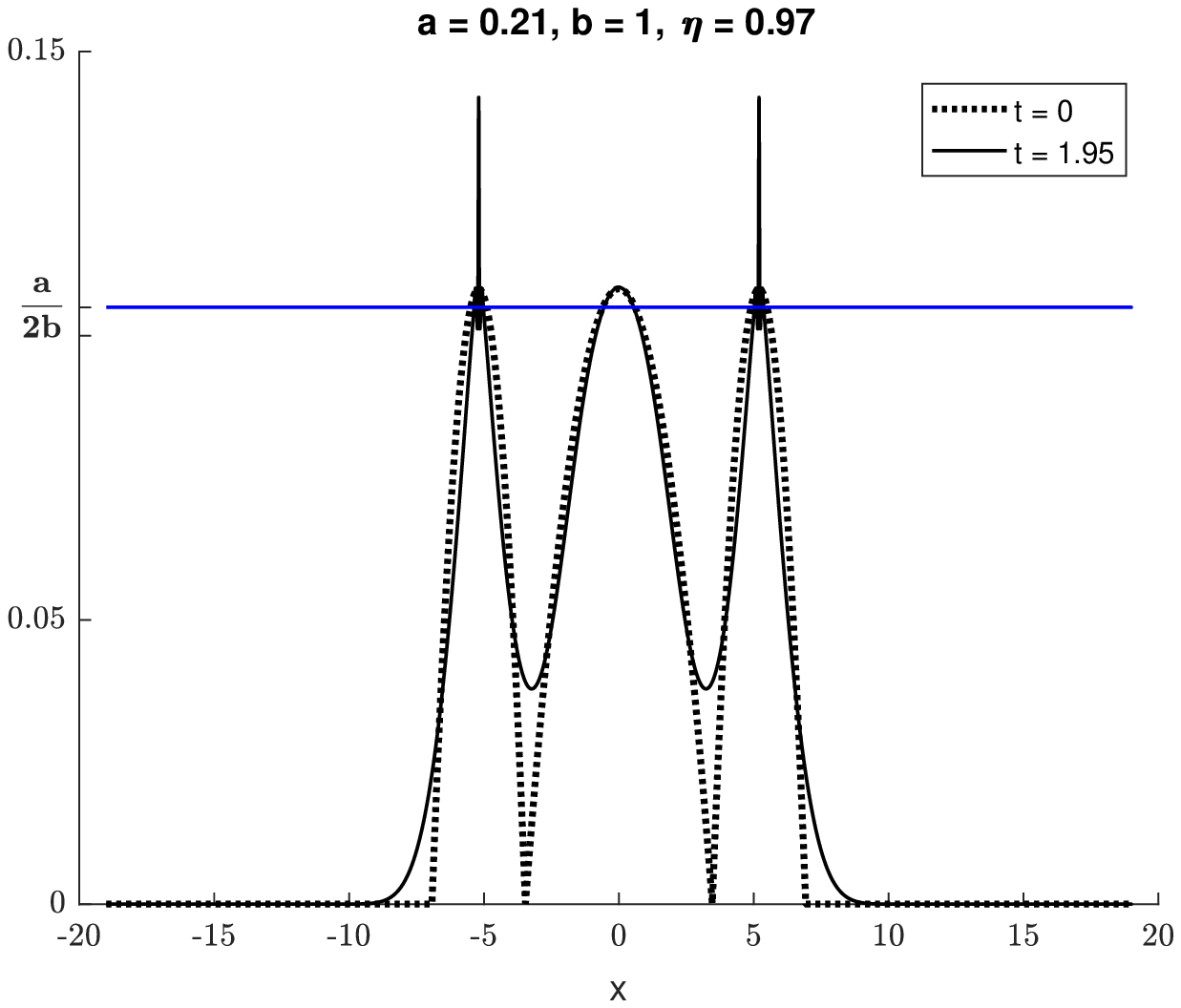}
}
\caption{Aggregation for $\eta = 0.97$ and initial data 1 (left) and 2 (right)}
\label{fig:ResMacAggregation}
\end{figure}

\subsection{Convergence of the stochastic particle system to the diffusion-\\aggregation equation}
In the previous part, we have analyzed the performance of numerical solutions separately.  
Since in the diffusive regime, i.e. $\eta>1$, the density of the particle system is expected to converge to the 
density of the diffusion-aggregation model, we now study the convergence numerically. 

Let $\hat{X}_j^{i,m}$ be the $m-$th sample of $X_{t_j}^i$ for $m=1,\dots,M$. We define 
\begin{align*}
u_i^j = \frac{1}{M} \sum_{m=1}^M \frac{1}{N \Delta x}\sum_{k=1}^N \chi_{[x_i-\Delta x/2,x_i+\Delta x/2)}(\hat{X}_j^{k,m})
\end{align*}
as the density estimator for the particle system.
Let $\tilde{u}^j_i$ denote the macroscopic density approximation on the same time-space grid.  
We define the error by $e_i^j:= u_i^j-\tilde{u}_i^j$ and use the following norms
\begin{align*}
||e||_\infty = \max_{i,j}\{|e_i^j|\}, \quad
||e||_{p} = \max_{j}\left(\Delta x \sum_{i} |e_i^j|^p\right)^{\frac{1}{p}}
\end{align*}
to measure the distance between both approximations. 
We consider the diffusion case $\eta = 1.5$ and study the convergence of the estimated particle to the macroscopic density 
regarding the number of particles $N$. 
Even for the rough spatial discretization $\Delta x = 2^{-3}$ and $1000$ Monte-Carlo runs, we observe a convergence 
in all norms as table \ref{tab:conv1} shows. 
\begin{table}[H]
\centering
\begin{tabular}{c|c|c|c|c|c|c}
&$||\cdot||_\infty$&EOC&$||\cdot||_{1}$&EOC&$||\cdot||_{2}$&EOC\\ \hline
$N = 50$&$2.199 \cdot 10^{-2}$&$-$&$7.518 \cdot 10^{-2}$&$-$&$2.026 \cdot 10^{-2}$&$-$\\ \hline
$N = 100$&$1.508 \cdot 10^{-2}$&$0.544$&$6.365 \cdot 10^{-2}$&$0.240$&$1.725 \cdot 10^{-2}$&$0.232$ \\ \hline
$N = 200$&$1.383 \cdot 10^{-2}$&$0.130$&$5.691 \cdot 10^{-2}$&$0.161$&$1.539 \cdot 10^{-2}$&$0.165$ \\ \hline
$N = 400$&$1.150 \cdot 10^{-2}$&$0.266$&$5.114 \cdot 10^{-2}$&$0.154$&$1.413 \cdot 10^{-2}$&$0.123$ \\ \hline
$N = 800$&$1.111 \cdot 10^{-2}$&$0.050$&$5.057 \cdot 10^{-2}$&$0.016$&$1.381 \cdot 10^{-2}$&$0.033$ \\ \hline
Mean EOOC&$-$&$0.248$&$-$&$0.143$&$-$&$0.138$
\end{tabular}
\caption{Numerical convergence in $N$ with respect to different norms for initial distribution 1, $\eta = 1.5, \Delta x = 2^{-3}$ and time horizon $T=7$ }
\label{tab:conv1}
\end{table}
The EOC decreases as the number of particles increases which is the result of the rough spatial discretization and the high value 
of $\epsilon$. We note that this gap cannot be reduced by a higher number of particles. 
If the range of strong interaction $\epsilon$ and the spatial discretization is reduced, 
we would need a very large number of particles (see \eqref{eq:Estimate}) as well as a small time step-size to obtain meaningful results
since the computation time increases at least quadratically in the number of particles.

\section*{Acknowledgments}
This work was financially supported by the DAAD project ``DAAD-PPP VR China'' (project ID: 57215936)
and the DFG grant GO 1920/4-1.
\bibliographystyle{siam}
\bibliography{ms}
\end{document}